\DeclareMathOperator{\aut}{Aut}
\DeclareMathOperator{\cay}{Cay}
\DeclareMathOperator{\cyc}{Cyc}
\DeclareMathOperator{\id}{id}
\DeclareMathOperator{\iso}{Iso}
\DeclareMathOperator{\orb}{Orb}
\DeclareMathOperator{\rk}{rk}
\DeclareMathOperator{\Span}{Span}
\DeclareMathOperator{\sym}{Sym}
\DeclareMathOperator{\rad}{rad}
\DeclareMathOperator{\alg}{Alg}
\def\@seccntformat#1{\csname the#1\endcsname. } 
\def\@biblabel#1{#1.}
\title{Separability of Schur rings over abelian groups of odd order}
\author{Grigory Ryabov}
\address{Sobolev Institute of Mathematics, Novosibirsk, Russia}
\address{Novosibirsk State University, Novosibirsk, Russia}
\email{gric2ryabov@gmail.com}
\thanks{The work is supported by the Russian Foundation for Basic Research (project 18-31-00051)}
\date{}
\newtheorem{prop}{Proposition}[section]
\newtheorem*{theo1}{Main Theorem}
\newtheorem{lemm}[prop]{Lemma}
\newtheorem*{corl1}{Corollary 1}
\newtheorem*{corl2}{Corollary 2}
\theoremstyle{definition}
\begin{document}

\vspace{\baselineskip}
\vspace{\baselineskip}

\vspace{\baselineskip}

\vspace{\baselineskip}

\begin{abstract}
An $S$-ring (a Schur ring) is said to be \emph{separable} with respect to a class of groups $\mathcal{K}$ if every algebraic isomorphism from the $S$-ring in question to an $S$-ring over a group from $\mathcal{K}$ is induced by a combinatorial isomorphism. A finite group $G$ is said to be \emph{separable} with respect to $\mathcal{K}$ if every $S$-ring over $G$ is separable with respect to $\mathcal{K}$. We prove that every abelian group $G$ of order $9p$, where $p$ is a prime, is separable with respect to the class of all finite abelian groups. Modulo previously obtained results, this completes a classification of noncyclic abelian groups of odd order that are separable with respect to the class of all finite abelian groups. Also this implies that the Weisfeiler-Leman dimension of the class of Cayley graphs over~$G$ is at most~2.  
\\
\\
\textbf{Keywords}: Schur rings, Cayley graphs, Cayley graph isomorphism problem.
\\
\textbf{MSC}:05E30, 05C60, 20B35.
\end{abstract}

\maketitle

\section{Introduction}

A \emph{Schur ring} or \emph{$S$-ring} over a finite group $G$ can be defined as a subring of the group ring $\mathbb{Z}G$ that is a free $\mathbb{Z}$-module spanned by a partition of $G$ closed under
taking inverse and containing the identity element $e$ of $G$ as a class (see Section~2 for the exact definition). The elements of the partition are called \emph{the basic sets} of the $S$-ring. The first construction of such ring was proposed by Schur~\cite{Schur}. The general theory of $S$-rings was developed by Wielandt in~\cite{Wi}. Schur and Wielandt used $S$-rings to study permutation groups containing regular subgroups. Concerning the theory of $S$-rings, we refer the reader to~\cite{CP,MP}.

Let $\mathcal{A}$ and $\mathcal{A}^{'}$ be $S$-rings over groups $G$ and $G^{'}$ respectively. \emph{A (combinatorial) isomorphism} from $\mathcal{A}$ to $\mathcal{A}^{'}$  is defined to be a bijection  $f:G\rightarrow G^{'}$ satisfying the following condition: for every basic set $X$ of $\mathcal{A}$ there exists a basic set $X^{'}$ of $\mathcal{A}^{'}$ such that $f$ is an isomorphism of the Cayley graphs $\cay(G,X)$ and $\cay(G^{'},X^{'})$. \emph{An algebraic isomorphism} from  $\mathcal{A}$ to $\mathcal{A}^{'}$ is defined to be a bijection from the set of basic sets of $\mathcal{A}$ to the set of basic sets of $\mathcal{A}^{'}$ that preserves structure constants. Every algebraic isomorphism is extended by linearity to the ring isomorphism from $\mathcal{A}$ to $\mathcal{A}^{'}$. One can verify that every combinatorial isomorphism induces the algebraic one. However, the converse statement is not true in general (see~\cite{EP1}).

Let $\mathcal{K}$ be a class of groups. Following~\cite{EP3}, we say that an $S$-ring $\mathcal{A}$ is \emph{separable} with respect to $\mathcal{K}$ if every algebraic isomorphism from $\mathcal{A}$ to an $S$-ring over a group from $\mathcal{K}$ is induced by a combinatorial isomorphism. An importance of separable $S$-rings comes from the observation that a separable $S$-ring is determined up to isomorphism by the tensor of its structure constants (with respect to the basis corresponding to the partition of the underlying group).

The following definition was suggested in~\cite{Ry2}: a finite group $G$ is said to be \emph{separable} with respect to $\mathcal{K}$ if every $S$-ring over $G$ is separable with respect to $\mathcal{K}$. The classes of all finite cyclic groups and all finite abelian groups are denoted by $\mathcal{K}_C$ and $\mathcal{K}_A$ respectively. The question whether a given group is separable with respect to some class is quite complicated. Even among cyclic groups there are infinitely many both separable and nonseparable with respect to $\mathcal{K}_C$ groups (see~\cite{EP1,EP5}). 

We say that a finite group $G$ is \emph{weakly separable} if it is separable with respect to the class of groups isomorphic to $G$. The cyclic and elementary abelian groups of order~$n$ are denoted by $C_n$ and $E_n$ respectively. In~\cite{Ry4} it was proved that every weakly separable abelian group belongs to one of several explicitly given families. From the results obtained in~\cite{Ry1,Ry2, Ry3} it follows that some of these families, namely the groups $C_{p^k}$, $C_{4p}$, $E_4\times C_p$, and $C_q\times C_{q^k}$, where $p$ is a prime, $q\in\{2,3\}$, and $k\geq 1$, are separable with respect to $\mathcal{K}_A$. However, for other families the question whether the groups from these families are separable with respect to $\mathcal{K}_A$ remains open. In this paper we give an affirmative answer to this question for two more families, namely for $C_{9p}$ and $E_9 \times C_p$, where $p$ is a prime. The main result of the paper can be formulated as follows. 

\begin{theo1}\label{main}
An abelian group of order $9p$ is separable with respect to $\mathcal{K}_A$ for every prime $p$.
\end{theo1}

As an immediate consequence of the Main Theorem,~\cite[Theorem~1]{Ry1}, and~\cite[Theorem~1.2, Theorem~1.3]{Ry4}, we obtain a classification of noncyclic abelian groups of odd order that are separable with respect to $\mathcal{K}_A$.

\begin{corl1}\label{oddorder}
A noncyclic abelian group of odd order is separable with respect to $\mathcal{K}_A$ if and only if it is isomorphic to $C_3\times C_{3^k}$ for an integer $k\geq 1$, or $E_9\times C_p$  for a prime $p\geq 3$.
\end{corl1}

One more motivation to study separable $S$-rings comes from the Cayley graph isomorphism problem. If a group $G$ is separable with respect to a class $\mathcal{K}$ then given a Cayley graph over $G$ and a Cayley graph over an arbitrary group from $\mathcal{K}$ one can test efficiently whether these two Cayley graphs are isomorphic by using the Weisfeiler-Leman algorithm~\cite{WeisL}. In the sense of~\cite{KPS} this means that the Weisfeiler-Leman dimension of the class of Cayley graphs over $G$ is at most~2. 

\begin{corl2}
Let $p$ be a prime, $G$ an abelian group of order $9p$, and $\mathcal{G}$ the class of Cayley graphs over~$G$. Then the Weisfeiler-Leman dimension of $\mathcal{G}$ is at most~$2$. 
\end{corl2}

The text of the paper is organized in the following way. Section~2 contains a background of $S$-rings, especially, isomorphisms of $S$-rings, schurian and separable $S$-rings, wreath and tensor products of $S$-rings, $S$-rings over cyclic groups. In Section~3 we give a description and properties of $S$-rings over $C_{9p}$ and $E_9\times C_p$, where $p$ is a prime. In Section~3 we prove the Main Theorem.
\\
\\
\\
{\bf Notation.}

The set of non-identity elements of a group $G$ is denoted by  $G^\#$.

The projections of $X\subseteq A\times B$ to $A$ and $B$ are denoted by $X_A$ and $X_B$ respectively.

If $X\subseteq G$ then the element $\sum_{x\in X} {x}$ of the group ring $\mathbb{Z}G$ is denoted by $\underline{X}$.

The order of $g\in G$ is denoted by $|g|$.

The set $\{x^{-1}:x\in X\}$ is denoted by $X^{-1}$.

The subgroup of $G$ generated by $X$ is denoted by $\langle X\rangle$; we also set $\rad(X)=\{g\in G:\ gX=Xg=X\}$.

If  $m\in \mathbb{Z}$ then the set $\{x^m: x \in X\}$ is denoted by $X^{(m)}$.

If $X\subseteq G$ then the set of arcs $\{(g,xg):~x\in X,~g\in G\}$ of the Cayley digraph $\cay(G,X)$ is denoted by $R(X)$.

The group of all permutations of $G$ is denoted by $\sym(G)$.

The subgroup of $\sym(G)$ induced by right multiplications of $G$ is denoted by $G_{right}$.

For a set $\Delta\subseteq \sym(G)$ and a section $S=U/L$ of $G$ we set 
$$\Delta^S=\{f^S:~f\in \Delta,~S^f=S\},$$
where $S^f=S$ means that $f$ permutes the $L$-cosets in $U$ and $f^S$ denotes the bijection of $S$ induced by $f$.

If a group $K$ acts on a set $\Omega$ then the set of all orbits of $K$ on $\Omega$ is denoted by $\orb(K,\Omega)$.

If $K\leq \sym(\Omega)$ and $\alpha\in \Omega$ then the stabilizer of $\alpha$ in $K$ is denoted by $K_{\alpha}$.

If $H\leq G$ then the normalizer and centralizer of $H$ in $G$ are denoted by $N_G(H)$ and $C_G(H)$ respectively.

The cyclic group of order $n$ is denoted by  $C_n$.

The elementary abelian group of order $n$ is denoted by $E_n$.

The class of all finite abelian groups is denoted by $\mathcal{K}_A$.

\section{Preliminaries}
In this section we provide a background of $S$-rings. In general, we follow~\cite{Ry2}, where the most part of the material is contained.

\subsection{Definitions and basic facts}

Let $G$ be a finite group and $\mathbb{Z}G$  the integer group ring. The identity element of $G$ is denoted by~$e$.  A subring  $\mathcal{A}\subseteq \mathbb{Z} G$ is called an \emph{$S$-ring} (a \emph{Schur} ring) over $G$ if there exists a partition $\mathcal{S}=\mathcal{S}(\mathcal{A})$ of~$G$ such that:

$(1)$ $\{e\}\in\mathcal{S}$,

$(2)$  if $X\in\mathcal{S}$ then $X^{-1}\in\mathcal{S}$,

$(3)$ $\mathcal{A}=\Span_{\mathbb{Z}}\{\underline{X}:\ X\in\mathcal{S}\}$.

\noindent The elements of $\mathcal{S}$ are called the \emph{basic sets} of  $\mathcal{A}$ and the number $\rk(\mathcal{A})=|\mathcal{S}|$ is called the \emph{rank} of~$\mathcal{A}$. The $S$-ring of rank~2 over $G$ is denoted by $\tau(G)$. Denote the set $\{|X|:~X\in \mathcal{S}(\mathcal{A}), X\neq \{e\}\}$ by $\mathcal{N}(\mathcal{A})$.

Let $X,Y\in\mathcal{S}$. If $Z\in \mathcal{S}$ then the number of distinct representations of $z\in Z$ in the form $z=xy$ with $x\in X$ and $y\in Y$ does not depend on the choice of $z\in Z$. Denote this number by $c^Z_{X,Y}$. One can see that $\underline{X}~\underline{Y}=\sum_{Z\in \mathcal{S}(\mathcal{A})}c^Z_{X,Y}\underline{Z}$. Therefore the numbers  $c^Z_{X,Y}$ are the structure constants of $\mathcal{A}$ with respect to the basis $\{\underline{X}:\ X\in\mathcal{S}\}$.

A set $X \subseteq G$ is called an \emph{$\mathcal{A}$-set} if $\underline{X}\in \mathcal{A}$. A subgroup $H \leq G$ is called an \emph{$\mathcal{A}$-subgroup} if $H$ is an $\mathcal{A}$-set. One can check that for every $\mathcal{A}$-set $X$ the groups $\langle X \rangle$ and $\rad(X)$ are $\mathcal{A}$-subgroups.

Let $L \unlhd U\leq G$. A section $U/L$ is called an \emph{$\mathcal{A}$-section} if $U$ and $L$ are $\mathcal{A}$-subgroups. If $S=U/L$ is an $\mathcal{A}$-section then the module
$$\mathcal{A}_S=Span_{\mathbb{Z}}\left\{\underline{X}^{\pi}:~X\in\mathcal{S}(\mathcal{A}),~X\subseteq U\right\},$$
where $\pi:U\rightarrow U/L$ is the canonical epimorphism, is an $S$-ring over $S$.

\begin{lemm}~\cite[Lemma~2.1]{EKP} \label{intersection}
Let $\mathcal{A}$ be an $S$-ring over a group $G$, $H$ an $\mathcal{A}$-subgroup of $G$, and $X \in \mathcal{S}(\mathcal{A})$. Then the number $|X\cap Hx|$ does not depend on $x\in X$.
\end{lemm}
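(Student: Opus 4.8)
The plan is to realize $|X\cap Hx|$ as the coefficient of $x$ in a suitable element of $\mathcal{A}$, and then use the defining property that on each basic set the coefficient of any element of $\mathcal{A}$ is constant.

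First I would consider the element $\underline{H}\,\underline{X}\in\mathbb{Z}G$. Since $H$ is an $\mathcal{A}$-subgroup we have $\underline{H}\in\mathcal{A}$, and since $X\in\mathcal{S}(\mathcal{A})$ we have $\underline{X}\in\mathcal{A}$; as $\mathcal{A}$ is a subring of $\mathbb{Z}G$, it follows that $\underline{H}\,\underline{X}\in\mathcal{A}$. Writing $\underline{H}\,\underline{X}=\sum_{Y\in\mathcal{S}(\mathcal{A})}a_Y\,\underline{Y}$ with $a_Y\in\mathbb{Z}$, we conclude that the coefficient of an element $g\in G$ in $\underline{H}\,\underline{X}$ depends only on the basic set of $\mathcal{A}$ containing $g$. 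In particular, since $X$ is a single basic set, this coefficient is the same for all $x\in X$.

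Next I would compute that coefficient by hand. For $g\in G$, the coefficient of $g$ in $\underline{H}\,\underline{X}$ is, by definition of the product in $\mathbb{Z}G$, the number of pairs $(h,x)$ with $h\in H$, $x\in X$ and $hx=g$. The map $(h,x)\mapsto x$ is a bijection from the set of such pairs onto $X\cap Hg$: its inverse sends $x$ to $(gx^{-1},x)$, and one checks that $gx^{-1}\in H$ exactly when $x\in Hg$. Hence this coefficient equals $|X\cap Hg|$. Combining this with the previous paragraph and taking $g=x\in X$ yields that $|X\cap Hx|$ does not depend on $x\in X$, as required. There is no serious obstacle in this argument; the only point deserving a moment's care is the verification of the bijection in the coefficient count (and the — immediate — fact that $\underline{H}\,\underline{X}\in\mathcal{A}$).
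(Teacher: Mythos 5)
Your proof is correct. The paper itself states this lemma with a citation to~\cite[Lemma~2.1]{EKP} and gives no proof, and your argument --- observing that $\underline{H}\,\underline{X}\in\mathcal{A}$, that the coefficient of $g$ in $\underline{H}\,\underline{X}$ equals $|X\cap Hg|$, and that coefficients of an element of $\mathcal{A}$ are constant on each basic set --- is exactly the standard argument behind the cited result, with the coefficient count verified correctly.
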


\subsection{Isomorphisms and schurity}

Let $\mathcal{A}$ and $\mathcal{A}^{'}$ be $S$-rings over groups $G$  and  $G^{'}$ respectively. A bijection $f$ from $G$ to $G^{'}$ is defined to be~\emph{a (combinatorial) isomorphism} from $\mathcal{A}$ to $\mathcal{A}^{'}$ if 
$$\{R(X)^f:~X\in \mathcal{S}(\mathcal{A})\}=\{R(X^{'}):~X^{'}\in \mathcal{S}(\mathcal{A}^{'})\},$$
where $R(X)^f=\{(h^f,g^f):~(h,g)\in R(X)\}$. If there exists an isomorphism from $\mathcal{A}$ to $\mathcal{A}^{'}$ then we say that $\mathcal{A}$ and $\mathcal{A}^{'}$ are \emph{isomorphic} and write $\mathcal{A}\cong \mathcal{A}^{'}$. The group $\iso(\mathcal{A})$ of all isomorphisms from $\mathcal{A}$ to itself has a normal subgroup
$$\aut(\mathcal{A})=\{f\in \iso(\mathcal{A}): R(X)^f=R(X)~\text{for every}~X\in \mathcal{S}(\mathcal{A})\}.$$
This subgroup is called the \emph{automorphism group} of $\mathcal{A}$. One can verify that $\aut(\mathcal{A})\geq G_{right}$ and $N_{\aut(\mathcal{A})}(G_{right})_e\leq \aut(G)$. The $S$-ring $\mathcal{A}$ is said to be \emph{normal} if $G_{right}\unlhd \aut(\mathcal{A})$. If $S$ is an $\mathcal{A}$-section then $\aut(\mathcal{A})^S\leq\aut(\mathcal{A}_S)$. 

Let $K$ be a subgroup of $\sym(G)$ containing $G_{right}$. Schur proved in~\cite{Schur} that the $\mathbb{Z}$-submodule
$$V(K,G)=\Span_{\mathbb{Z}}\{\underline{X}:~X\in \orb(K_e,~G)\}$$
is an $S$-ring over $G$. An $S$-ring $\mathcal{A}$ over  $G$ is called \emph{schurian} if $\mathcal{A}=V(K,G)$ for some $K\leq \sym(G)$ with $K\geq G_{right}$. Clearly, $\mathbb{Z}G=V(G_{right},G)$ and $\tau(G)=V(\sym(G),G)$. If $\mathcal{A}=V(K,G)$ for some $K\leq \sym(G)$ containing $G_{right}$ and $S$ is an $\mathcal{A}$-section then $\mathcal{A}_S=V(K^S,G)$. So if $\mathcal{A}$ is schurian then $\mathcal{A}_S$ is also schurian for every $\mathcal{A}$-section $S$. One can verify that $\mathcal{A}$ is schurian if and only if $\mathcal{A}=V(\aut(\mathcal{A}),G)$ or, equivalently, $\mathcal{S}(\mathcal{A})=\orb(\aut(\mathcal{A})_e,G)$.

Let $K \leq \aut(G)$. Then  $\orb(K,G)$ forms a partition of  $G$ that defines an  $S$-ring $\mathcal{A}$ over $G$.  In this case  $\mathcal{A}$ is called \emph{cyclotomic} and denoted by $\cyc(K,G)$. If $\mathcal{A}=\cyc(K,G)$ for some $K\leq \aut(G)$ then $\mathcal{A}=V(KG_{right},G)$. So every cyclotomic $S$-ring is schurian. If $\mathcal{A}=\cyc(K,G)$ for some $K\leq \aut(G)$ and $S$ is an $\mathcal{A}$-section then $\mathcal{A}_S=\cyc(K^S,G)$.

A \emph{Cayley isomorphism} from  $\mathcal{A}$  to $\mathcal{A}^{'}$   is defined to be a group isomorphism $f:G\rightarrow G^{'}$ such that $\mathcal{S}(\mathcal{A})^f=\mathcal{S}(\mathcal{A}^{'})$. If there exists a Cayley isomorphism from  $\mathcal{A}$  to $\mathcal{A}^{'}$ we say that $\mathcal{A}$ and $\mathcal{A}^{'}$ are \emph{Cayley isomorphic}  and write $\mathcal{A}\cong_{\cay}\mathcal{A}^{'}$. Every Cayley isomorphism is a (combinatorial) isomorphism, however the converse statement is not true.

Sets $X,Y\subseteq G$ are called \emph{rationally conjugate} if $Y=X^{(m)}$ for some $m\in \mathbb{Z}$ coprime to $|G|$.  If $G$ is abelian and $m$ is coprime to~$|G|$ then $\sigma_m$ and $\sigma_0$ denote the automorphisms of $G$ such that $g^{\sigma_m}=g^m$ and $g^{\sigma_0}=g^{-1}$  respectively for every $g\in G$. The following  statement is known as the Schur theorem on multipliers.

\begin{lemm}\cite[Theorem~23.9, (a)]{Wi}\label{burn}
Let $\mathcal{A}$ be an $S$-ring over an abelian group  $G$. Then  $X^{(m)}\in \mathcal{S}(\mathcal{A})$  for every  $X\in \mathcal{S}(\mathcal{A})$ and every  $m\in \mathbb{Z}$ coprime to $|G|$. Other words, $\sigma_m$ is a Cayley isomorphism from $\mathcal{A}$ to istself for every  $m\in \mathbb{Z}$ coprime to $|G|$.
\end{lemm}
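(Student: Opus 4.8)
The plan is to reduce the statement to the case $m=p$ a prime with $p\nmid|G|$, then to recover $\underline{X^{(p)}}$ from the power $\underline{X}^{p}\in\mathcal{A}$ by means of a ``freshman's dream'' congruence modulo $p$, followed by a short descent that clears the resulting error term.

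First I would note that the integers $m$ coprime to $|G|$ for which $\sigma_m$ sends $\mathcal{S}(\mathcal{A})$ onto itself are closed under multiplication: since $G$ is abelian, $\sigma_{mm'}=\sigma_m\circ\sigma_{m'}$ as permutations of $G$, and a composition of Cayley isomorphisms is again one. Axiom $(2)$ of the definition of an $S$-ring says exactly that $\sigma_0=\sigma_{-1}$ has this property. As every integer coprime to $|G|$ is, up to sign, a product of primes each coprime to $|G|$, the map $\sigma_m$ is always a composition of maps of the form $\sigma_0$ and $\sigma_p$ with $p$ prime and $p\nmid|G|$; hence it suffices to treat $m=p$ with $p$ a prime and $p\nmid|G|$. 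For this it is in turn enough to prove that $\underline{X^{(p)}}\in\mathcal{A}$ for every $X\in\mathcal{S}(\mathcal{A})$: then $\underline{X^{(p)}}$ is a $0/1$-element of $\mathcal{A}$, so $X^{(p)}$ is a union of basic sets and the partition $\{X^{(p)}:X\in\mathcal{S}(\mathcal{A})\}$ of $G$ is refined by $\mathcal{S}(\mathcal{A})$; as $\sigma_p$ is a bijection of $G$, both partitions have $\rk(\mathcal{A})$ classes, hence they coincide, which is the claim for $m=p$.

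Next I would use that $G$ is abelian, so $\mathbb{Z}G$ is commutative: in the multinomial expansion of $\bigl(\sum_{x\in X}x\bigr)^{p}$ all coefficients except those of the pure powers $x^{p}$ are divisible by the prime $p$, and since $p\nmid|G|$ the map $x\mapsto x^{p}$ is injective on $G$, so $\sum_{x\in X}x^{p}=\underline{X^{(p)}}$ and therefore
$$\underline{X}^{p}\equiv\underline{X^{(p)}}\pmod{p\mathbb{Z}G}.$$
As $\mathcal{A}$ is a subring of $\mathbb{Z}G$ we have $\underline{X}^{p}\in\mathcal{A}$, whence $\underline{X^{(p)}}\in\mathcal{A}+p\mathbb{Z}G$. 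To remove the summand $p\mathbb{Z}G$, write $\underline{X^{(p)}}=\alpha+p\beta$ with $\alpha=\sum_{Y\in\mathcal{S}(\mathcal{A})}c_{Y}\underline{Y}\in\mathcal{A}$ and $\beta\in\mathbb{Z}G$, and compare the coefficient of a fixed $g\in G$ on both sides: this gives that $c_{Y}$ is congruent modulo $p$ to the coefficient of $g$ in $\underline{X^{(p)}}$ whenever $g\in Y\in\mathcal{S}(\mathcal{A})$. Were some basic set $Y$ to meet $X^{(p)}$ without lying inside it, we would get $c_{Y}\equiv 1$ and $c_{Y}\equiv 0\pmod p$ at once; hence each basic set is contained in $X^{(p)}$ or disjoint from it, so $X^{(p)}$ is a union of basic sets and $\underline{X^{(p)}}\in\mathcal{A}$, as desired.

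The reduction to primes and the multinomial congruence are routine; the step I expect to be the crux is the last one, where the congruence by itself only places $\underline{X^{(p)}}$ in $\mathcal{A}+p\mathbb{Z}G$, and one must combine the fact that $\underline{X^{(p)}}$ is a $0/1$-vector with the fact that $\mathcal{A}$ is spanned by a partition of $G$ in order to force the error term $p\beta$ to vanish.
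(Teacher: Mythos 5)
Your argument is correct: the reduction to primes $p\nmid|G|$ via composition of the maps $\sigma_m$, the congruence $\underline{X}^{p}\equiv\underline{X^{(p)}}\pmod{p\mathbb{Z}G}$ (using that $\mathbb{Z}G$ is commutative and $x\mapsto x^{p}$ is injective on $G$), and the coefficient comparison forcing every basic set to be contained in or disjoint from $X^{(p)}$, together with the counting argument identifying the two partitions, are all sound. The paper gives no proof of its own but cites Wielandt's Theorem~23.9(a), and your proof is essentially the classical Schur--Wielandt multiplier argument that this citation refers to.
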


\subsection{Algebraic isomorphisms and separability}

As in the previous subsection, $\mathcal{A}$ and $\mathcal{A}^{'}$ are $S$-rings over groups $G$  and  $G^{'}$ respectively. A bijection $\varphi:\mathcal{S}(\mathcal{A})\rightarrow\mathcal{S}(\mathcal{A}^{'})$ is defined to be an \emph{algebraic isomorphism} from $\mathcal{A}$  to $\mathcal{A}^{'}$ if 
$$c_{X,Y}^Z=c_{X^{\varphi},Y^{\varphi}}^{Z^{\varphi}}$$ 
for every $X,Y,Z\in \mathcal{S}(\mathcal{A})$. The mapping $\underline{X}\rightarrow \underline{X}^{\varphi}$ is extended by linearity to the ring isomorphism from $\mathcal{A}$  to $\mathcal{A}^{'}$. If there exists an algebraic isomorphism from  $\mathcal{A}$  to $\mathcal{A}^{'}$ we say that $\mathcal{A}$ and $\mathcal{A}^{'}$ are \emph{algebraically isomorphic} and write $\mathcal{A}\cong_{\alg}\mathcal{A}^{'}$. Every isomorphism $f$ of $S$-rings  preserves structure constants  and hence $f$ induces the algebraic isomorphism denoted by~$\varphi_f$. An $S$-ring $\mathcal{A}$ is defined to be \emph{separable} with respect to a class of groups $\mathcal{K}$ if every algebraic isomorphism from $\mathcal{A}$ to an $S$-ring over a group from $\mathcal{K}$ is induced by an isomorphism. A finite group $G$ is defined to be \emph{separable} with respect to $\mathcal{K}$ if every $S$-ring over $G$ is separable with respect to $\mathcal{K}$.

For every finite group $G$ the $S$-rings $\tau(G)$ and $\mathbb{Z}G$ are separable with respect to the class of all finite groups. In the former case there exists the unique algebraic isomorphism from the $S$-ring of rank~2 over $G$ to the $S$-ring of rank~2 over a given  group of order $|G|$ and this algebraic isomorphism is induced by every isomorphism. In the latter case every basic set is singleton and hence every algebraic isomorphism is induced by an isomorphism in a natural way.

Further everywhere throughout the text the word ``separable'' means ``separable with respect to $\mathcal{K}_A$'' and we will write ``separable'' instead of ``separable with respect to $\mathcal{K}_A$'' for short.

Let $\varphi:\mathcal{A}\rightarrow \mathcal{A}^{'}$ be an algebraic isomorphism. One can see that $\varphi$ is extended to a bijection between  $\mathcal{A}$- and $\mathcal{A}^{'}$-sets and hence between  $\mathcal{A}$- and $\mathcal{A}^{'}$-sections. The images of an $\mathcal{A}$-set $X$ and an $\mathcal{A}$-section $S$ under the action of $\varphi$ are denoted by $X^{\varphi}$ and $S^{\varphi}$ respectively. If $S$ is an $\mathcal{A}$-section then  $\varphi$ induces the algebraic isomorphism $\varphi_S:\mathcal{A}_S\rightarrow \mathcal{A}^{'}_{S^{'}}$, where $S^{'}=S^{\varphi}$. The above bijection between the $\mathcal{A}$- and $\mathcal{A}^{'}$-sets is, in fact, an isomorphism of the corresponding lattices. It follows that
$$\langle X^{\varphi} \rangle = \langle X \rangle ^{\varphi}~\text{and}~\rad(X^{\varphi})=\rad(X)^{\varphi}$$
for every  $\mathcal{A}$-set $X$. Due to $c^{\{e\}}_{X,Y}=\delta_{Y,X^{-1}}|X|$ and $|X|=c^{\{e\}}_{X,X^{-1}}$ , where $X,Y\in \mathcal{S}(\mathcal{A})$ and $\delta_{X,X^{-1}}$ is the Kronecker delta, we obtain that $(X^{-1})^{\varphi}=(X^{\varphi})^{-1}$ and $|X|=|X^{\varphi}|$ for every $\mathcal{A}$-set $X$. In particular, $|G|=|G^{'}|$.

\begin{lemm}\cite[Lemma 2.1]{EP5}\label{uniq}
Let $\mathcal{A}$ and $\mathcal{A}^{'}$ be $S$-rings over groups $G$ and $G^{'}$ respectively. Let $\mathcal{B}$ be the $S$-ring generated by $\mathcal{A}$ and an element $\xi\in \mathbb{Z}G$ and $\mathcal{B}^{'}$  the $S$-ring generated by $\mathcal{A}^{'}$ and an element $\xi^{'}\in \mathbb{Z}G^{'}$.  Then given algebraic isomorphism  $\varphi:\mathcal{A}\rightarrow \mathcal{A}^{'}$ there is at most one algebraic isomorphism $\psi:\mathcal{B}\rightarrow \mathcal{B}^{'}$ extending $\varphi$ and such that $\xi^{\psi}=\xi^{'}$.
\end{lemm}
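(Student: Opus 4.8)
The plan is to prove the uniqueness by a closure argument, the one extra ingredient being the observation that an algebraic isomorphism respects not only the ring multiplication but also the two other operations used to build $S$-rings from generators. Concretely, I would first note that any algebraic isomorphism $\psi$ from its domain to its image commutes with the componentwise (Hadamard) product $\circ$ taken in the group bases and with the transpose map $\eta\mapsto\eta^{\top}$ (the linear extension of $g\mapsto g^{-1}$). This is immediate from the definition: for basic sets $X,Y$ one has $\underline{X}\circ\underline{Y}=\delta_{X,Y}\underline{X}$ and $\underline{X}^{\top}=\underline{X^{-1}}$, and since $\psi$ restricts to a bijection of basic sets with $(X^{-1})^{\psi}=(X^{\psi})^{-1}$ and distinct basic sets have disjoint supports, both identities are preserved; extending by (bi)linearity they hold on all of the domain. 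Together with the fact, already recorded in the text, that $\psi$ is a $\mathbb{Z}$-linear ring isomorphism, this says that $\psi$ is a homomorphism for $+$, for the multiplication, for $\circ$, and for $\top$ simultaneously.

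Next I would take two algebraic isomorphisms $\psi_1,\psi_2:\mathcal{B}\rightarrow\mathcal{B}^{'}$, both extending $\varphi$ and both sending $\xi$ to $\xi^{'}$, and consider the agreement set
$\mathcal{B}_0=\{\eta\in\mathcal{B}:~\eta^{\psi_1}=\eta^{\psi_2}\}$.
By the previous paragraph $\mathcal{B}_0$ is a $\mathbb{Z}$-submodule of $\mathbb{Z}G$ closed under multiplication, under $\circ$, and under $\top$. It contains $\mathcal{A}$, because $\psi_1|_{\mathcal{A}}=\varphi=\psi_2|_{\mathcal{A}}$, and it contains the element $\xi$, because $\xi^{\psi_1}=\xi^{'}=\xi^{\psi_2}$; in particular $\underline{\{e\}},\underline{G}\in\mathcal{A}\subseteq\mathcal{B}_0$.

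Finally I would invoke the standard description of $\mathcal{B}$, the $S$-ring generated by $\mathcal{A}$ and $\xi$: it is the smallest $S$-ring over $G$ containing $\mathcal{A}$ and $\xi$, and it coincides with the closure of $\mathcal{A}\cup\{\xi\}$ under $\mathbb{Z}$-linear combinations, multiplication, Hadamard multiplication, and transpose (see, e.g., \cite{CP,MP}). Since $\mathcal{B}_0$ is closed under exactly these operations and contains the generating set $\mathcal{A}\cup\{\xi\}$, it contains this closure, i.e. $\mathcal{B}\subseteq\mathcal{B}_0$; as $\mathcal{B}_0\subseteq\mathcal{B}$ trivially, we get $\mathcal{B}_0=\mathcal{B}$, hence $\psi_1=\psi_2$. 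Alternatively, the same conclusion can be reached by writing $\mathcal{B}$ as the union of the ascending chain $\mathcal{B}^{(0)}=\Span_{\mathbb{Z}}(\mathcal{A}\cup\{\xi\})\subseteq\mathcal{B}^{(1)}\subseteq\cdots$, where $\mathcal{B}^{(i+1)}$ is spanned by $\mathcal{B}^{(i)}$ together with all products $\eta\zeta$, Hadamard products $\eta\circ\zeta$, and transposes $\eta^{\top}$ of elements of $\mathcal{B}^{(i)}$ (this chain stabilizes, $\mathbb{Z}G$ being a Noetherian module), and then showing by induction on $i$ that any admissible extension is forced on $\mathcal{B}^{(i)}$.

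The only step that needs genuine care — though it is not deep — is the first one: verifying that an algebraic isomorphism, which by definition only preserves structure constants, automatically commutes with the Hadamard product and the transpose. Everything after that is the bookkeeping observation that the generated $S$-ring is an operational closure, combined with the tautology that two structure-preserving maps which agree on a generating set agree everywhere.
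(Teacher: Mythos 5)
Your first step is correct and is the right observation: since an algebraic isomorphism restricts to a bijection of basic sets with $(X^{-1})^{\psi}=(X^{\psi})^{-1}$, and distinct basic sets are disjoint, it commutes (after linear extension) with the Hadamard product and the transpose as well as with addition and the ordinary product; consequently your agreement set $\mathcal{B}_0$ is a $\mathbb{Z}$-submodule closed under all four operations and containing $\mathcal{A}\cup\{\xi\}$. Note also that the paper itself gives no proof of this lemma, quoting it from \cite{EP5}, so your argument has to stand on its own.

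The gap is in the final step: the claim that $\mathcal{B}$ coincides with the closure of $\mathcal{A}\cup\{\xi\}$ under $\mathbb{Z}$-linear combinations, the two multiplications and the transpose is false over $\mathbb{Z}$. For example, let $G=C_4=\langle x\rangle$, let $\mathcal{A}$ have basic sets $\{e\},\{x^2\},\{x,x^3\}$, and let $\xi=x+3x^3$. Since the coefficients $1$ and $3$ differ, $x$ and $x^3$ lie in different basic sets of $\mathcal{B}$, so $\mathcal{B}=\mathbb{Z}G$. On the other hand, reduction modulo $2$ is a homomorphism for all four of your operations and sends $\mathcal{A}\cup\{\xi\}$ into the image of $\mathcal{A}$ in $\mathbb{F}_2G$, which is itself closed under these operations; hence your operational closure reduces into a proper subspace of $\mathbb{F}_2G$ and in particular never contains $x$, so it is strictly smaller than $\mathcal{B}$. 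The missing ingredient is the Schur--Wielandt principle, and the repair is short: either add to your list of closure operations the extraction of level sets $\eta=\sum_g c_g g\mapsto\sum_{c_g=k}g$ (an algebraic isomorphism respects this too, because for $\eta=\sum_X c_X\underline{X}$ each level set is a union of basic sets; with this extra operation the $\mathbb{Z}$-closure is genuinely an $S$-ring and equals $\mathcal{B}$), or keep your operations but pass to $\mathbb{Q}$-coefficients, where level sets are obtained by Hadamard interpolation polynomials, conclude that the $\mathbb{Q}$-span of your closure is $\mathbb{Q}\otimes\mathcal{B}$, and finish by noting that two $\mathbb{Z}$-linear maps into a torsion-free module that agree on a full-rank submodule of $\mathcal{B}$ agree on $\mathcal{B}$. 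With either repair your closure argument is complete.
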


\subsection{Wreath and tensor products}

Let $S=U/L$ be an $\mathcal{A}$-section of $G$. The $S$-ring~$\mathcal{A}$ is called the \emph{$S$-wreath product} of $\mathcal{A}_U$ and $\mathcal{A}_{G/L}$ if $L\trianglelefteq G$ and $L\leq\rad(X)$ for each basic set $X$ outside~$U$. In this case we write $\mathcal{A}=\mathcal{A}_U\wr_{S}\mathcal{A}_{G/L}$. If $U=L$ then  $\mathcal{A}$ coincides with the \emph{wreath product} of $\mathcal{A}_L$ and $\mathcal{A}_{G/L}$ denoted by $\mathcal{A}_L\wr\mathcal{A}_{G/L}$. The $S$-wreath product is called \emph{nontrivial} or \emph{proper}  if $L\neq \{e\}$ and $U\neq G$.

\begin{lemm}\cite[Lemma 4.4]{Ry1}\label{sepwr}
Let $\mathcal{A}$ be the $S=U/L$-wreath product over an abelian group $G$. Suppose that $\mathcal{A}_U$ and $\mathcal{A}_{G/L}$ are separable and $\aut(\mathcal{A}_U)^S=\aut(\mathcal{A}_{S})$. Then $\mathcal{A}$ is separable. In particular, the wreath product of two separable $S$-rings over abelian groups is separable.
\end{lemm}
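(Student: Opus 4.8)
The plan is to take an arbitrary algebraic isomorphism $\varphi\colon\mathcal{A}\to\mathcal{A}'$, where $\mathcal{A}'$ is an $S$-ring over an abelian group $G'$, and to manufacture a combinatorial isomorphism inducing $\varphi$ by gluing together combinatorial isomorphisms furnished by the separability of the two factors; write $\pi\colon G\to G/L$ and $\pi'\colon G'\to G'/L'$ for the natural epimorphisms, and identify $S=U/L$ with a subgroup of $G/L$. The first step is to transfer the wreath decomposition to $\mathcal{A}'$: since $\varphi$ induces a lattice isomorphism between the $\mathcal{A}$- and $\mathcal{A}'$-subgroups and commutes with $\rad(\cdot)$, the sets $U'=U^\varphi$ and $L'=L^\varphi$ are $\mathcal{A}'$-subgroups, $S'=U'/L'=S^\varphi$ is an $\mathcal{A}'$-section, and $L'\le\rad(X')$ for every basic set $X'$ outside $U'$; as $G'$ is abelian, $L'\trianglelefteq G'$, whence $\mathcal{A}'=\mathcal{A}'_{U'}\wr_{S'}\mathcal{A}'_{G'/L'}$. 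Moreover $\varphi$ induces algebraic isomorphisms $\varphi_U\colon\mathcal{A}_U\to\mathcal{A}'_{U'}$ and $\varphi_{G/L}\colon\mathcal{A}_{G/L}\to\mathcal{A}'_{G'/L'}$ that both induce one and the same algebraic isomorphism $\varphi_S\colon\mathcal{A}_S\to\mathcal{A}'_{S'}$ on the common section. I would also note here that the ``in particular'' assertion is the special case $U=L$, in which $S$ is trivial and the hypothesis $\aut(\mathcal{A}_U)^S=\aut(\mathcal{A}_S)$ holds vacuously, so only the general statement needs proof.

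Next I would use separability of the factors to choose a combinatorial isomorphism $f_0\colon G/L\to G'/L'$ with $\varphi_{f_0}=\varphi_{G/L}$ and a combinatorial isomorphism $f_1\colon U\to U'$ with $\varphi_{f_1}=\varphi_U$. Since a combinatorial isomorphism conjugates the full automorphism group onto the full automorphism group, compatibly with restriction to a subgroup and its sections, combining this for $f_1$ with the hypothesis $\aut(\mathcal{A}_U)^S=\aut(\mathcal{A}_S)$ (and for the induced bijection $f_1^S\colon S\to S'$ with the analogous equality for $\mathcal{A}_S$) gives $\aut(\mathcal{A}'_{U'})^{S'}=\aut(\mathcal{A}'_{S'})$: on the primed side every automorphism of $\mathcal{A}'_{S'}$ lifts to an automorphism of $\mathcal{A}'_{U'}$.

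Then comes the gluing. Fix a transversal $R\ni e$ of $U$ in $G$. For each $r\in R$, the map $\gamma_r\colon S\to S'$, $\gamma_r(s)=f_0(s+\pi(r))-f_0(\pi(r))$, is a combinatorial isomorphism $\mathcal{A}_S\to\mathcal{A}'_{S'}$ inducing $\varphi_S$, and so is $f_1^S$; hence $\gamma_r\circ(f_1^S)^{-1}$ lies in $\aut(\mathcal{A}'_{S'})=\aut(\mathcal{A}'_{U'})^{S'}$ and thus equals $k_r^{S'}$ for some $k_r\in\aut(\mathcal{A}'_{U'})$. Set $f_r=k_r\circ f_1$, a combinatorial isomorphism $\mathcal{A}_U\to\mathcal{A}'_{U'}$ inducing $\varphi_U$ with $f_r^S=\gamma_r$. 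Finally choose, for each $r\in R$, an element $f(r)\in G'$ with $\pi'(f(r))=f_0(\pi(r))$, and define $f\colon G\to G'$ by $f(u+r)=f_r(u)+f(r)$ for $u\in U$, $r\in R$. By construction $f$ is a bijection, it descends modulo $L$ to $f_0$, and for every $g\in G$ the map $u\mapsto f(u+g)-f(g)$ from $U$ onto $U'$ is a combinatorial isomorphism $\mathcal{A}_U\to\mathcal{A}'_{U'}$ inducing $\varphi_U$ (translations inside $U$, resp.\ $U'$, lie in $\aut(\mathcal{A}_U)$, resp.\ $\aut(\mathcal{A}'_{U'})$, since $\underline U$, resp.\ $\underline{U'}$, belongs to the ring). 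From these two properties one verifies directly that $R(X)^f=R(X^\varphi)$ for every basic set $X$ of $\mathcal{A}$: for $X\subseteq U$ by splitting $R(X)$ into its parts over the $U$-cosets, and for $X\not\subseteq U$ by using $L\le\rad(X)$, which gives $R(X)=(\pi\times\pi)^{-1}(R(X/L))$, together with the fact that $f$ descends to $f_0$. Hence $f$ induces $\varphi$, and $\mathcal{A}$ is separable.

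The step I expect to be the main obstacle is the gluing. A priori the restriction of $f_0$ to an $S$-coset and the restriction of $f_1$ to $S$ are unrelated combinatorial isomorphisms of $\mathcal{A}_S$, and the discrepancy between them is an automorphism of $\mathcal{A}'_{S'}$ that has to be absorbed into a correction of $f_1$ taking values in $\aut(\mathcal{A}'_{U'})$. The hypothesis $\aut(\mathcal{A}_U)^S=\aut(\mathcal{A}_S)$, once transferred into $\aut(\mathcal{A}'_{U'})^{S'}=\aut(\mathcal{A}'_{S'})$, is exactly what makes this reconciliation possible; after a correctly assembled $f$ is in hand, verifying that it is a combinatorial isomorphism inducing $\varphi$ is routine bookkeeping with the relations $R(\cdot)$.
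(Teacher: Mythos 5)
Your argument is correct and is essentially the standard gluing proof of this statement; the paper itself offers no proof, only the citation to [Ry1, Lemma~4.4], where the same construction is carried out (transport the $S$-wreath decomposition along $\varphi$, obtain $f_0$ and $f_1$ from separability of the factors, and absorb the coset-wise discrepancy between $f_0$ and $f_1^S$ into $\aut(\mathcal{A}'_{U'})$ using the hypothesis $\aut(\mathcal{A}_U)^S=\aut(\mathcal{A}_S)$ transferred to the primed side). I see no gaps: the key point that $\gamma_r\circ(f_1^S)^{-1}$ induces the identity algebraic isomorphism and hence lies in $\aut(\mathcal{A}'_{S'})=\aut(\mathcal{A}'_{U'})^{S'}$ is exactly where the extra hypothesis is needed, and your final verification of $R(X)^f=R(X^{\varphi})$ in the two cases $X\subseteq U$ and $L\leq\rad(X)$ is the right bookkeeping.
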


Let  $\mathcal{A}_1$ and $\mathcal{A}_2$ be $S$-rings over $G_1$ and $G_2$ respectively. Then the set
$$\mathcal{S}=\mathcal{S}(\mathcal{A}_1)\times \mathcal{S}(\mathcal{A}_2)=\{X_1\times X_2:~X_1\in \mathcal{S}(\mathcal{A}_1),~X_2\in \mathcal{S}(\mathcal{A}_2)\}$$
forms a partition of  $G=G_1\times G_2$ that defines an  $S$-ring over $G$. This $S$-ring is called the  \emph{tensor product}  of $\mathcal{A}_1$ and $\mathcal{A}_2$ and denoted by $\mathcal{A}_1 \otimes \mathcal{A}_2$.

\begin{lemm}\cite[Lemma 2.3]{EKP}\label{tenspr}
Let $\mathcal{A}$ be an $S$-ring over an abelian group $G=G_1\times G_2$. Suppose that $G_1$ and $G_2$ are $\mathcal{A}$-subgroups. Then 

$(1)$ $X_{G_i}\in \mathcal{S}(\mathcal{A})$ for all $X\in \mathcal{S}(\mathcal{A})$ and $i=1,2;$

$(2)$ $\mathcal{A} \geq \mathcal{A}_{G_1}\otimes \mathcal{A}_{G_2}$, and the equality is attained whenever $\mathcal{A}_{G_i}=\mathbb{Z}G_i$ for some $i\in \{1,2\}$.
\end{lemm}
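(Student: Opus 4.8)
The plan is to establish part~(1) directly and then obtain part~(2) as a short consequence. For part~(1), fix $i=1$; the case $i=2$ is symmetric. Let $X\in\mathcal S(\mathcal A)$, and regard $G_1$ and $G_2$ as the subgroups $G_1\times\{e\}$ and $\{e\}\times G_2$ of $G$. First I would observe that $XG_2$ is an $\mathcal A$-set: the product $\underline X\,\underline{G_2}$ lies in $\mathcal A$ because $G_2$ is an $\mathcal A$-subgroup and $\mathcal A$ is a ring, and its support is exactly $XG_2$, while the support of any element of an $S$-ring is a union of basic sets. Under the identification $G=G_1\times G_2$ one has $XG_2=X_{G_1}\times G_2$. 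Next I would show that $\{ZG_2:\ Z\in\mathcal S(\mathcal A)\}$ is a partition of $G$: these sets cover $G$, and if $ZG_2\cap Z'G_2\neq\varnothing$ then some element of the basic set $Z'$ lies in the $\mathcal A$-set $ZG_2$, hence $Z'\subseteq ZG_2$, hence $Z'G_2\subseteq ZG_2$ since $ZG_2$ is a union of right $G_2$-cosets; by symmetry $ZG_2=Z'G_2$.

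With this in hand, pick $x=(x_1,x_2)\in X$ and let $Y$ be the basic set of $\mathcal A$ containing $x_1$. Since $G_1$ is an $\mathcal A$-subgroup it is a union of basic sets, and $x_1\in G_1$, so $Y\subseteq G_1$ and therefore $YG_2=Y\times G_2$. The element $(x_1,e)$ lies both in $XG_2=X_{G_1}\times G_2$ and in $Y\times G_2$, so by the partition property $X_{G_1}\times G_2=Y\times G_2$; projecting onto $G_1$ gives $X_{G_1}=Y\in\mathcal S(\mathcal A)$, which is part~(1).

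For part~(2), note that $\mathcal S(\mathcal A_{G_j})=\{Z\in\mathcal S(\mathcal A):\ Z\subseteq G_j\}$ straight from the definition of the restriction. A basic set of $\mathcal A_{G_1}\otimes\mathcal A_{G_2}$ has the form $Y_1\times Y_2$ with $Y_j\in\mathcal S(\mathcal A_{G_j})$, and since $G=G_1\times G_2$ is a direct product we get $\underline{Y_1}\,\underline{Y_2}=\underline{Y_1\times Y_2}\in\mathcal A$; hence $\mathcal A\geq\mathcal A_{G_1}\otimes\mathcal A_{G_2}$. If moreover $\mathcal A_{G_2}=\mathbb ZG_2$, then for any $X\in\mathcal S(\mathcal A)$ part~(1) gives $X_{G_2}\in\mathcal S(\mathcal A_{G_2})$, which is a singleton $\{g_2\}$; thus $X=X_{G_1}\times\{g_2\}$ and $\underline X=\underline{X_{G_1}}\,g_2\in\mathcal A_{G_1}\otimes\mathbb ZG_2$ by part~(1) again, so $\mathcal A\subseteq\mathcal A_{G_1}\otimes\mathbb ZG_2$ and equality holds. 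The only real content is in part~(1) — namely, upgrading the easy statement ``$X_{G_i}$ is an $\mathcal A$-set'' to ``$X_{G_i}$ is a single basic set''; the partition argument above is what does this, and everything afterwards is bookkeeping with the direct product.
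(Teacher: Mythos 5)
Your proof is correct, and it is the standard argument: the paper itself gives no proof of this lemma but simply cites \cite[Lemma~2.3]{EKP}, and your reasoning (that $XG_2$ is an $\mathcal{A}$-set whose support is $X_{G_1}\times G_2$, that the sets $ZG_2$ for $Z\in\mathcal{S}(\mathcal{A})$ partition $G$, and the resulting identification of $X_{G_1}$ with a basic set inside $G_1$) is essentially the one used in that reference. The deduction of part~(2) from part~(1) is also complete, including the equality case when $\mathcal{A}_{G_2}=\mathbb{Z}G_2$.
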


\begin{lemm} \label{septens}
The	 tensor product of two separable $S$-rings is separable.
\end{lemm}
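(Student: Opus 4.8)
The plan is to show that if $\mathcal{A}_i$ over $G_i$ is separable with respect to $\mathcal{K}_A$ for $i=1,2$, then $\mathcal{A}=\mathcal{A}_1\otimes\mathcal{A}_2$ over $G=G_1\times G_2$ is separable. So let $\varphi:\mathcal{A}\to\mathcal{B}$ be an algebraic isomorphism onto an $S$-ring $\mathcal{B}$ over an abelian group $H$, and we must produce a combinatorial isomorphism inducing $\varphi$. The first step is to locate inside $\mathcal{A}$ the structural data that an algebraic isomorphism must preserve: $G_1$ and $G_2$ are $\mathcal{A}$-subgroups with $G_1\cap G_2=\{e\}$ and $G_1G_2=G$, and every basic set of $\mathcal{A}$ has the form $X_1\times X_2$ with $X_i\in\mathcal{S}(\mathcal{A}_i)$. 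Since $\varphi$ maps the $\mathcal{A}$-subgroup lattice isomorphically onto the $\mathcal{B}$-subgroup lattice, preserving orders and intersections, the images $H_i=G_i^{\varphi}$ are $\mathcal{B}$-subgroups with $H_1\cap H_2=\{e\}$ and $H_1H_2=H$ (using $|H|=|G|=|G_1||G_2|=|H_1||H_2|$); moreover $\mathcal{B}_{H_i}$ is the image of $\mathcal{A}_{G_i}=\mathcal{A}_i$ under the induced algebraic isomorphism $\varphi_{G_i}$. By Lemma~\ref{tenspr}(2) applied in $H$, together with the fact that $\varphi$ preserves basic-set structure (each $X_1\times X_2$ with $\langle (X_1\times X_2)\cap G_i\rangle$-type projections forces $X^{\varphi}$ to be a "rectangle" $Y_1\times Y_2$), one gets $\mathcal{B}=\mathcal{B}_{H_1}\otimes\mathcal{B}_{H_2}$, so that $\varphi=\varphi_{G_1}\otimes\varphi_{G_2}$ in the obvious sense: $(X_1\times X_2)^{\varphi}=X_1^{\varphi_{G_1}}\times X_2^{\varphi_{G_2}}$.

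The second step is to invoke separability of $\mathcal{A}_1$ and $\mathcal{A}_2$. Since $\varphi_{G_i}:\mathcal{A}_i\to\mathcal{B}_{H_i}$ is an algebraic isomorphism and $H_i$ is abelian, hence in $\mathcal{K}_A$, separability gives a combinatorial isomorphism $f_i:G_i\to H_i$ inducing $\varphi_{G_i}$, i.e.\ $R(X_i)^{f_i}=R(X_i^{\varphi_{G_i}})$ for every $X_i\in\mathcal{S}(\mathcal{A}_i)$. The third step is to assemble $f=f_1\times f_2:G_1\times G_2\to H_1\times H_2$ and check it is a combinatorial isomorphism from $\mathcal{A}$ to $\mathcal{B}$ inducing $\varphi$. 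For a basic set $X=X_1\times X_2$, the arc set $R(X)$ on $G=G_1\times G_2$ decomposes as the "product" of $R(X_1)$ on $G_1$ and $R(X_2)$ on $G_2$: concretely $\bigl((g_1,g_2),(h_1,h_2)\bigr)\in R(X)$ iff $(g_i,h_i)\in R(X_i)$ for $i=1,2$. Applying $f=f_1\times f_2$ coordinatewise and using $R(X_i)^{f_i}=R(X_i^{\varphi_{G_i}})$, we get $R(X)^f=R(X_1^{\varphi_{G_1}}\times X_2^{\varphi_{G_2}})=R(X^{\varphi})$, which is exactly the condition that $f$ is a combinatorial isomorphism inducing $\varphi$.

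The only genuinely delicate point is the first step: verifying that an algebraic isomorphism out of a tensor product automatically respects the tensor decomposition, i.e.\ that $\mathcal{B}$ really is the tensor product $\mathcal{B}_{H_1}\otimes\mathcal{B}_{H_2}$ with $\varphi$ acting diagonally. This follows because every basic set of $\mathcal{A}=\mathcal{A}_1\otimes\mathcal{A}_2$ splits as $X_1\times X_2$, and one recovers $X_1$ and $X_2$ intrinsically: $X_1$ is the basic set of $\mathcal{A}_1$ such that $\underline{X_1}=\underline{X}\cdot\underline{G_2}/|X_2|$ after projection, or more invariantly $X_1 = X\cdot G_2 \cap G_1$ up to a cyclotomic/Cayley normalization — in any case $X_{G_i}$ is an $\mathcal{A}$-set by Lemma~\ref{tenspr}(1), and $X=X_{G_1}\times X_{G_2}$ with $|X|=|X_{G_1}||X_{G_2}|$. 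All of these are expressible through structure constants and $\mathcal{A}$-subgroups, so $\varphi$ transports them, forcing $X^{\varphi}=X_{G_1}^{\varphi}\times X_{G_2}^{\varphi}$ and hence the claimed description of $\varphi$. Once that bookkeeping is in place, steps two and three are immediate, and the lemma follows; in particular this also reproves that $\mathbb{Z}G_1\otimes\mathcal{A}_2$ is separable whenever $\mathcal{A}_2$ is, matching the equality clause of Lemma~\ref{tenspr}(2).
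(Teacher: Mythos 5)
Your argument is essentially correct, but it is a genuinely different route from the paper: the paper gives no direct proof at all, simply noting (as in~\cite{Ry2}) that the statement follows from the general result~\cite[Theorem~1.20]{E} on tensor products of association schemes, whereas you give a self-contained argument entirely inside the $S$-ring formalism of Section~2. Your three steps are sound: the images $H_i=G_i^{\varphi}$ are $\mathcal{B}$-subgroups with $H_1\cap H_2=\{e\}$ and $H_1H_2=H$ by the lattice isomorphism and preservation of cardinalities; every basic set $X^{\varphi}$ is forced to be the rectangle $(X_{G_1})^{\varphi}\times(X_{G_2})^{\varphi}$, so $\mathcal{B}=\mathcal{B}_{H_1}\otimes\mathcal{B}_{H_2}$ and $\varphi$ acts diagonally; and then the product $f_1\times f_2$ of combinatorial isomorphisms supplied by separability of $\mathcal{A}_1,\mathcal{A}_2$ (the groups $H_i$ being abelian, so in $\mathcal{K}_A$) induces $\varphi$, since $R(X_1\times X_2)$ is the coordinatewise product of $R(X_1)$ and $R(X_2)$. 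The one place that should be tightened is the identification $(X^{\varphi})_{H_i}=(X_{G_i})^{\varphi}$: the phrase ``up to a cyclotomic/Cayley normalization'' is not meaningful here, but the claim does follow cleanly from the identity $\underline{X}\,\underline{G_2}=|X_{G_2}|\,\underline{X_{G_1}G_2}$ (so $(XG_2)^{\varphi}=X^{\varphi}H_2$, since $\varphi$ extends to a ring isomorphism) together with $X_{G_1}=XG_2\cap G_1$ and the fact that $\varphi$ preserves intersections of $\mathcal{A}$-sets; comparing cardinalities, $|X^{\varphi}|=|X_{G_1}||X_{G_2}|=|(X^{\varphi})_{H_1}||(X^{\varphi})_{H_2}|$, then yields the rectangle. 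What your approach buys is independence from the unpublished thesis~\cite{E} and visibility of exactly which properties of algebraic isomorphisms are used (and it needs no hypothesis that $G_1,G_2$ themselves are abelian, only that the target group is); what the paper's citation buys is brevity and a statement valid in the broader setting of coherent configurations.
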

	
\begin{proof}
As it was noted in~\cite{Ry2}, the statement of the lemma follows from~\cite[Theorem~1.20]{E}.
\end{proof}

\subsection{$S$-rings over cyclic groups}

Let $G$ be a cyclic group and $\mathcal{A}$ an $S$-ring over $G$. Put $\rad(\mathcal{A})=\rad(X)$, where $X$ is a basic set of $\mathcal{A}$ containing a generator of $G$. Note that $\rad(\mathcal{A})$ does not depend on the choice of $X$. Indeed, if $Y\in \mathcal{S}(\mathcal{A})$, $\langle Y \rangle=G$, and $Y\neq X$ then  $X$ and $Y$ are rationally conjugate by Lemma~\ref{burn} and hence $\rad(X)=\rad(Y)$.

\begin{lemm}\label{circ}
Let $\mathcal{A}$ be an $S$-ring over a cyclic group $G$. Then one of the following statements holds:

$(1)$ $\rk(\mathcal{A})=2$;

$(2)$ $\mathcal{A}$ is the tensor product of two $S$-rings over proper subgroups of $G$;

$(3)$ $\mathcal{A}$ is the nontrivial $S$-wreath product for some $\mathcal{A}$-section $S$;

$(4)$ $\mathcal{A}$ is a normal cyclotomic $S$-ring with $|\rad(\mathcal{A})|=1$.
\end{lemm}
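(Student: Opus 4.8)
The plan is to induct on $|G|$ and argue by cases according to the structure of the lattice of $\mathcal{A}$-subgroups, reducing everything to the radical $\rad(\mathcal{A})$ and to the automorphism group $\aut(\mathcal{A})$. First I would dispose of the trivial case: if $\rk(\mathcal{A})=2$ we are in case~(1), so assume $\rk(\mathcal{A})>2$ and fix a basic set $X$ with $\langle X\rangle=G$ (such a set exists, since the union of basic sets meeting a fixed generator of $G$ generates $G$; by Lemma~\ref{burn} all of them are rationally conjugate and hence share the same radical, which is $\rad(\mathcal{A})$). Write $|G|=p_1^{a_1}\cdots p_k^{a_k}$.

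The first dichotomy is on whether $\mathcal{A}$ ``splits'' over the prime decomposition of $G$. Since $G$ is cyclic, for each prime $p\mid|G|$ the Sylow $p$-subgroup $G_p$ is the unique subgroup of its order; I would note $G_p$ need not be an $\mathcal{A}$-subgroup, but $\langle X^{(m)}:m\rangle$-type arguments together with Lemma~\ref{intersection} let one detect which ``blocks'' are present. If there is a nontrivial decomposition $G=G_1\times G_2$ into coprime parts with both $G_1,G_2$ being $\mathcal{A}$-subgroups, then by Lemma~\ref{tenspr}(1) every basic set is a direct product of its projections, so $\mathcal{A}=\mathcal{A}_{G_1}\otimes\mathcal{A}_{G_2}$ and we land in case~(2). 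So from now on assume no such coprime splitting exists; in particular one shows $|G|$ is either a prime power or the only proper nontrivial $\mathcal{A}$-subgroups fail to be coprime-complemented — the upshot being that the $\mathcal{A}$-subgroups form a chain modulo a controlled amount of structure.

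The second, and central, dichotomy is on $\rad(\mathcal{A})$. If $|\rad(\mathcal{A})|>1$, set $L=\rad(\mathcal{A})=\rad(X)$ and $U=\langle \rad(\mathcal{A})\rangle$-type section — more precisely, take $U$ to be the smallest $\mathcal{A}$-subgroup containing all basic sets $Y$ with $\rad(Y)\not\geq L$, and $L$ as above; since $G$ is cyclic, $L\trianglelefteq G$ automatically, and by construction $L\leq\rad(Y)$ for every basic set $Y$ outside $U$. If $U\neq G$ this exhibits $\mathcal{A}$ as the nontrivial $S=U/L$-wreath product, case~(3). (If $U=G$ one has to work slightly harder: then $L\leq\rad(Y)$ for all basic sets $Y$, forcing $\mathcal{A}=\mathcal{A}_{G/L}\wr\cdots$ in a degenerate sense, or one re-chooses the section — this bookkeeping is the fiddly part.) Finally, if $|\rad(\mathcal{A})|=1$, I would invoke the structure theory of $S$-rings over cyclic groups to conclude $\mathcal{A}$ is cyclotomic, $\mathcal{A}=\cyc(K,G)$ with $K\leq\aut(G)$, and that radical-free plus the failure of the tensor/wreath decompositions forces $G_{right}\trianglelefteq\aut(\mathcal{A})$, i.e.\ $\mathcal{A}$ is normal; that is case~(4). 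This last implication — that a radical-free, tensor-indecomposable, wreath-indecomposable $S$-ring over a cyclic group is normal cyclotomic — is essentially Leung--Man's theorem, and is the genuine mathematical obstacle; the rest is organizing the case analysis so the four alternatives are exhaustive and the reductions respect the inductive hypothesis on proper sections.

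Concretely, I expect the hard part to be two things glued together: (i) showing the three "decomposable" cases (tensor, wreath, rank $2$) are exactly the complement of "normal cyclotomic with trivial radical," which at bottom rests on the classification of $S$-rings over cyclic groups (Leung--Man), and (ii) the precise choice of the section $S=U/L$ in the wreath case so that the product is genuinely \emph{nontrivial} ($L\neq\{e\}$ and $U\neq G$) whenever $\rad(\mathcal{A})\neq\{e\}$ and $\mathcal{A}$ is not already a tensor product — handling the boundary configurations without double-counting against case~(2).
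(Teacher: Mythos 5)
There is a genuine error in your first reduction. You claim that if $G=G_1\times G_2$ is a nontrivial coprime decomposition with both $G_1$ and $G_2$ being $\mathcal{A}$-subgroups, then every basic set is the product of its projections and $\mathcal{A}=\mathcal{A}_{G_1}\otimes\mathcal{A}_{G_2}$. This is not what Lemma~\ref{tenspr} gives: its Statement~1 only says the projections $X_{G_i}$ are basic sets, and Statement~2 only gives the inclusion $\mathcal{A}\geq\mathcal{A}_{G_1}\otimes\mathcal{A}_{G_2}$, with equality guaranteed only when one of the restrictions is the full group ring. The paper itself contains a counterexample to your claim: the cyclotomic $S$-ring $\mathcal{A}_0=\cyc(W_0,C_{3p})$ built from the nontrivial subdirect product $W_0\leq\aut(C_3)\times\aut(C_p)$ has both $C_3$ and $C_p$ as $\mathcal{A}_0$-subgroups, yet $\mathcal{A}_0\neq(\mathcal{A}_0)_{C_3}\otimes(\mathcal{A}_0)_{C_p}$ (this is exactly the situation of Lemma~\ref{c3p}); it belongs to case~(4), not case~(2). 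So your case analysis is not exhaustive as argued: an $S$-ring with coprime $\mathcal{A}$-subgroup factors need not fall into case~(2), and your dichotomy would misclassify it. Relatedly, your wreath-product step is left open precisely where it matters: you concede that when the subgroup $U$ you construct equals $G$ the argument does not produce a nontrivial $S$-wreath decomposition, and it is not shown that a nontrivial radical always yields a proper section $S=U/L$ with $L\neq\{e\}$ and $U\neq G$.

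Beyond this, the decisive implication (trivial radical and indecomposability force $\mathcal{A}$ to be normal cyclotomic) is not proved but attributed to the known classification of circulant $S$-rings. That by itself would be acceptable, since the paper's own proof is nothing more than a citation of \cite[Theorems~4.1, 4.2]{EP4}; but if you choose to reconstruct the case analysis around that classification rather than quote it, the reductions you add must be correct, and the tensor-product step above is not. The cleanest fix is either to cite the Evdokimov--Ponomarenko theorems directly, as the paper does, or to drop the claim that coprime $\mathcal{A}$-subgroup factors force a tensor decomposition and instead let the cyclotomic/normal alternative absorb such examples.
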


\begin{proof}
The statement of the lemma follows from~\cite[Theorem~4.1, Theorem~4.2]{EP4}.
\end{proof}

Let $\Omega$ be a finite set. Permutation groups $K,~K^{'}\leq \sym(\Omega)$   are called  2-\emph{equivalent} if  $\orb(K,\Omega^2)=\orb(K^{'},\Omega^2)$. A permutation group  $K\leq \sym(\Omega)$ is called 2-\emph{isolated} if  it is the only group which is 2-equivalent to $K$.

\begin{lemm}\label{2isol}
Let $\mathcal{A}$ be an $S$-ring over a group $G$ of prime order $p$. Suppose that $p\leq 3$ or $\rk(\mathcal{A})>2$. Then $\aut(\mathcal{A})$ is 2-isolated. 
\end{lemm}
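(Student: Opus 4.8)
The plan is to split the argument according to the structure of $\aut(\mathcal{A})$ as a permutation group of degree $p$. First I would recall that a group $K\leq\sym(G)$ with $G$ of prime order $p$ that contains $G_{right}$ is either $2$-transitive or is contained in the normalizer $N=N_{\sym(G)}(G_{right})\cong\AGL(1,p)$; this is the classical Burnside dichotomy for transitive groups of prime degree. Accordingly, $\aut(\mathcal{A})$ falls into one of these two cases, which I treat separately. In both cases the goal is to show that $\aut(\mathcal{A})$ is recovered uniquely from $\orb(\aut(\mathcal{A}),G^2)$.

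In the first case $\aut(\mathcal{A})$ is $2$-transitive, which happens exactly when $\rk(\mathcal{A})=2$. By hypothesis this is only allowed when $p\leq 3$. For $p=2$ or $p=3$ the only $2$-transitive group of degree $p$ is $\sym(G)$ itself, and a group $2$-equivalent to $\sym(G)$ has the same orbits on $G^2$, hence is also $2$-transitive on $G$, hence equals $\sym(G)$ since for $p\leq 3$ the symmetric group is the unique transitive group of that degree. So $\aut(\mathcal{A})$ is $2$-isolated here; this case is essentially immediate.

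In the second case $\rk(\mathcal{A})>2$ and hence $\aut(\mathcal{A})\leq N\cong\AGL(1,p)$ is solvable. Here I would use that subgroups of $\AGL(1,p)$ are determined up to conjugacy by the order of their ``multiplier part'': writing $K=G_{right}\rtimes M$ with $M\leq\aut(G)\cong C_{p-1}$, the point stabilizer $K_e$ equals $M$, and the orbits of $K$ on $G^2$ are the sets $R(X)$ for $X\in\orb(M,G)$, i.e. the basic sets of the cyclotomic $S$-ring $\cyc(M,G)$. If $K'$ is $2$-equivalent to $K$, then $K'$ has the same orbits on $G^2$, so $K'$ is also transitive (it contains a diagonal-type orbit structure forcing transitivity) and has the same nontrivial subdegrees, giving $|K'_e|=|M|$. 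A subgroup of $\sym(G)$ that is $2$-equivalent to $K$ must lie in $\aut(V(K,G))=\aut(\cyc(M,G))$, and since $|M|\mid p-1$ with $M$ the unique subgroup of $\aut(G)$ of its order, one gets $K'=G_{right}\rtimes M=K$. The point I must be careful about is ruling out the possibility that $K'$ is $2$-transitive while $K$ is not: but $2$-equivalent groups have the same number of orbits on $G^2$, namely $\rk(\mathcal{A})>2$, so $K'$ cannot be $2$-transitive.

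The main obstacle is the bookkeeping in the second case: making precise that a $2$-equivalent group $K'$ is forced to be transitive, that its point stabilizer has the same order as $M$, and that inside $\sym(G)$ (not just inside $\AGL(1,p)$) there is no ``exotic'' solvable-or-not group with the same $G^2$-orbits. I expect this to reduce cleanly to Burnside's theorem on transitive groups of prime degree plus the cyclicity of $\aut(G)$, so the argument should be short, but the case $\rk(\mathcal{A})=2$ with $p>3$ genuinely must be excluded — for such $p$ there exist proper $2$-transitive subgroups of $\sym(p)$ that are $2$-equivalent to $\sym(p)$, which is exactly why the hypothesis ``$p\leq 3$ or $\rk(\mathcal{A})>2$'' appears.
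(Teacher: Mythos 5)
Your proposal is correct in outline, but it takes a genuinely different route from the paper. The paper never touches Burnside's theorem: for $\rk(\mathcal{A})>2$ it deduces from Lemma~\ref{circ} that $\mathcal{A}$ is normal and cyclotomic with trivial radical, hence schurian with $\aut(\mathcal{A})_e\leq\aut(G)$, observes that every nontrivial basic set is a faithful regular orbit of $\aut(\mathcal{A})_e$ (an automorphism of a group of prime order fixing a generator is trivial, and $\aut(G)$ is cyclic), and then quotes the criterion of \cite[Lemma~8.2]{MP2} that a faithful regular orbit of the stabilizer forces $2$-isolation; for $p\leq 3$ it just checks the groups of order at most~$6$ directly. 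You replace both external ingredients by Burnside's dichotomy for transitive groups of prime degree and a hands-on analysis inside $\AGL(1,p)$: this is more self-contained (neither \cite{MP2} nor, really, Lemma~\ref{circ} is needed), while the paper's version is shorter and reuses machinery it cites anyway.

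Two steps of your sketch need tightening. The inference ``same nontrivial subdegrees, hence $|K'_e|=|M|$'' is not valid by itself; the clean repair, exactly along the lines you anticipate, is to apply Burnside a second time, to $L=\aut(\cyc(M,G))$, which contains $K$ and every group $2$-equivalent to $K$ (such a group leaves each orbit $R(X)$, $X\in\orb(M,G)$, invariant). Since $L$ preserves at least two off-diagonal relations, it is not $2$-transitive, so it has a normal Sylow $p$-subgroup, which must be $G_{right}$ because $G_{right}\leq L$ has order $p$ and $p^2$ does not divide $p!$; hence $L\leq G_{right}\rtimes\aut(G)$, and since $\aut(G)$ acts regularly on $G^{\#}$, the stabilizer $L_e\leq\aut(G)$ can fix every $M$-orbit setwise only if $L_e\leq M$, so $L=K$. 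Then a $2$-equivalent $K'\leq K$ is transitive, hence contains $G_{right}$, and $K'_e\leq M$ acts transitively on a basic set of size $|M|$, so $K'=K$. Finally, in your rank-$2$ case the phrase ``the symmetric group is the unique transitive group of that degree'' is false for $p=3$ (consider $C_3$); what you need, and what is true, is that it is the unique $2$-transitive one.
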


\begin{proof}
If $p\leq 3$ then $|\aut(\mathcal{A})|\leq 6$ and the statement of the lemma can be checked by the straightforward computation. Suppose that $\rk(\mathcal{A})>2$. Then Statement~1 of Lemma~\ref{circ} does not hold for $\mathcal{A}$. Statements~2 and~3 of Lemma~\ref{circ} also do not hold for $\mathcal{A}$ because $G$ is of prime order. Therefore Statement~4 of Lemma~\ref{circ} holds for $\mathcal{A}$, i.e. $\mathcal{A}$ is normal and cyclotomic. So $\mathcal{A}$ is schurian and $\aut(\mathcal{A})_e\leq \aut(G)$. 

Let $X\in \mathcal{S}(\mathcal{A})$ and $X\neq \{e\}$. Then $X\in \orb(\aut(\mathcal{A})_e,G)$ because $\mathcal{A}$ is schurian. Suppose that $f\in \aut(\mathcal{A})_e\leq \aut(G)$ and $x^f=x$ for some $x\in X$. Then $f$ is trivial because $x$ is a generator of $G$. So $X$ is a regular orbit of $\aut(\mathcal{A})_e$. The group $\aut(G)$ is cyclic. So both of the groups $\aut(\mathcal{A})_e$ and $\aut(\mathcal{A})_e^X$ are cyclic groups of order $|X|$. Thus, $X$ is a faithful regular orbit of $\aut(\mathcal{A})_e$ and $\aut(\mathcal{A})$ is 2-isolated by~\cite[Lemma~8.2]{MP2}. The lemma is proved.
\end{proof}

\begin{lemm}\cite[Lemma~6.7, (1)]{EP2}\label{cycsep}
Let $\mathcal{A}$ be a normal cyclotomic $S$-ring with trivial radical over a cyclic group $G$. Then every algebraic isomorphism from $\mathcal{A}$ to itself is induced by a Cayley isomorphism.
\end{lemm}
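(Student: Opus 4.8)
The plan is to reduce an arbitrary algebraic automorphism of $\mathcal{A}$ to a multiplier $\sigma_m$ and then to invoke the uniqueness of extensions of algebraic isomorphisms (Lemma~\ref{uniq}). Write $\mathcal{A}=\cyc(K,G)$ with $K\le\aut(G)$, and let $X\in\mathcal{S}(\mathcal{A})$ be the basic set containing a generator of $G$, so that $\rad(X)=\rad(\mathcal{A})=\{e\}$. Since $\mathcal{A}$ is normal, $\aut(\mathcal{A})_e=K\le\aut(G)$; and since every element of $X$ generates $G$, the group $K$ acts on $X$ faithfully and regularly, whence $|K|=|X|$.

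The heart of the matter is the structural claim that $\mathcal{A}=\langle\tau(G),\underline{X}\rangle$, which I would establish by induction on $|G|$ using Lemma~\ref{circ}. A proper $S$-wreath product is impossible for $\mathcal{A}$: its kernel $L\ne\{e\}$ would have to lie in $\rad(X)=\{e\}$, because $X$ generates $G$ and therefore lies outside the proper subgroup $U$. If $\rk(\mathcal{A})=2$ then $X=G^{\#}$ and the claim is trivial. If $\mathcal{A}=\mathcal{A}_{G_1}\otimes\mathcal{A}_{G_2}$ for proper subgroups $G_1,G_2$ of coprime order, I treat this case separately (see below). In the remaining case --- $\rk(\mathcal{A})>2$ and $\mathcal{A}$ not a tensor product --- set $\mathcal{B}=\langle\tau(G),\underline{X}\rangle\subseteq\mathcal{A}$. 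Then $X\in\mathcal{S}(\mathcal{B})$, $\rad(\mathcal{B})=\rad(X)=\{e\}$, and $\aut(\mathcal{B})\supseteq\aut(\mathcal{A})$. Applying Lemma~\ref{circ} to $\mathcal{B}$, one sees that $\mathcal{B}$ is not of rank $2$ (else $X=G^{\#}$ and $\mathcal{A}=\tau(G)$), not a proper wreath product (trivial radical), and not a tensor product --- for if the generator basic set $X=X_1\times X_2$ of $\mathcal{B}$ factored as a direct product, then $K$ would equal the direct product of its projections to $\aut(G_1)$ and $\aut(G_2)$, making $\mathcal{A}$ a tensor product. Hence $\mathcal{B}=\cyc(K_{\mathcal{B}},G)$ is normal cyclotomic with trivial radical; it is therefore schurian, and normality gives $\aut(\mathcal{B})_e=K_{\mathcal{B}}$, which acts regularly on $X$, so $|K_{\mathcal{B}}|=|X|$. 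Since $K=\aut(\mathcal{A})_e\le\aut(\mathcal{B})_e=K_{\mathcal{B}}$ and $|K|=|K_{\mathcal{B}}|$, we get $K=K_{\mathcal{B}}$, whence $\mathcal{B}=\mathcal{A}$.

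Granting $\mathcal{A}=\langle\tau(G),\underline{X}\rangle$, the proof concludes quickly. Let $\varphi$ be an algebraic automorphism of $\mathcal{A}$. Then $X^{\varphi}\in\mathcal{S}(\mathcal{A})$ also generates $G$ and has trivial radical; fix generators $x\in X$ and $x'\in X^{\varphi}$ and write $x'=x^{m}$ with $\gcd(m,|G|)=1$. By Lemma~\ref{burn} the multiplier $\sigma_m$ is a Cayley isomorphism from $\mathcal{A}$ to itself, and from $x^{\sigma_m}=x'\in X^{\varphi}$ it follows that $X^{\sigma_m}=X^{\varphi}$; hence the induced algebraic automorphism $\varphi_{\sigma_m}$ sends $\underline{X}$ to $\underline{X^{\varphi}}$, exactly as $\varphi$ does. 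Both $\varphi$ and $\varphi_{\sigma_m}$ fix $\underline{G^{\#}}$ and so restrict to the identity on $\tau(G)$. By Lemma~\ref{uniq}, applied with the subring $\tau(G)$ and the element $\xi=\underline{X}$, there is at most one algebraic isomorphism on $\langle\tau(G),\underline{X}\rangle=\mathcal{A}$ that extends $\id_{\tau(G)}$ and sends $\underline{X}$ to $\underline{X^{\varphi}}$; therefore $\varphi=\varphi_{\sigma_m}$ is induced by the Cayley isomorphism $\sigma_m$. Finally, in the deferred tensor-product case $\mathcal{A}=\mathcal{A}_{G_1}\otimes\mathcal{A}_{G_2}$, each $\mathcal{A}_{G_i}$ is again a normal cyclotomic $S$-ring with trivial radical over a smaller cyclic group (normality of the factors follows from $\aut(\mathcal{A})\supseteq\aut(\mathcal{A}_{G_1})\times\aut(\mathcal{A}_{G_2})$ together with normality of $\mathcal{A}$), an algebraic automorphism of $\mathcal{A}$ splits as $\varphi_{G_1}\otimes\varphi_{G_2}$ because it fixes the $\mathcal{A}$-subgroups $G_1$ and $G_2$, and applying the inductive hypothesis to each factor and combining the resulting Cayley isomorphisms yields one in $\aut(G_1)\times\aut(G_2)=\aut(G)$ that induces $\varphi$.

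I expect the structural step $\mathcal{A}=\langle\tau(G),\underline{X}\rangle$ in the non-tensor case of rank $>2$ to be the main obstacle; it hinges on the two auxiliary facts that a cyclotomic $S$-ring over a cyclic group whose generator basic set factors as a direct product is itself a tensor product, and that every cyclotomic $S$-ring is schurian (applied to $\mathcal{B}$). Given these, the rigidity provided by Lemma~\ref{uniq} and the availability of the multiplier $\sigma_m$ do the rest.
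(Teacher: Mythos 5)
The paper itself offers no proof of this lemma: it is imported wholesale from Evdokimov--Ponomarenko \cite{EP2}, so your argument is a self-contained reconstruction rather than an alternative to anything internal, and as far as I can check it is correct. Your strategy --- reduce to showing $\mathcal{A}=\langle\tau(G),\underline{X}\rangle$ for the basic set $X$ containing a generator, note that $X^{\varphi}$ and $X$ are rationally conjugate so that $X^{\varphi}=X^{\sigma_m}$ by Lemma~\ref{burn}, and then pin down $\varphi=\varphi_{\sigma_m}$ by the rigidity of Lemma~\ref{uniq} applied with base ring $\tau(G)$ and $\xi=\underline{X}$ --- is precisely the pattern the paper deploys later in Cases~4--6 of the proof of the Main Theorem, so it sits naturally in the paper's toolkit. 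The individual steps hold up: a nontrivial $S$-wreath decomposition of $\mathcal{B}=\langle\underline{X}\rangle$ is killed by $\rad(X)=\{e\}$ since $X$ lies outside any proper $\mathcal{A}$-subgroup; a tensor decomposition of $\mathcal{B}$ would force $X=X_{G_1}\times X_{G_2}$, hence $|K|=|X|=|X_{G_1}|\,|X_{G_2}|=|K_1|\,|K_2|$ by regularity on generator sets, hence $K=K_1\times K_2$ and $\mathcal{A}$ a tensor product, contradicting the case hypothesis; and in the remaining case schurity plus normality give $|\aut(\mathcal{B})_e|=|X|=|\aut(\mathcal{A})_e|$ with $\aut(\mathcal{A})_e\leq\aut(\mathcal{B})_e$, whence $\mathcal{B}=\mathcal{A}$. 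Two points of polish rather than substance: the structural claim $\mathcal{A}=\langle\tau(G),\underline{X}\rangle$ is not in fact established by induction and can fail in the tensor case, so the induction on $|G|$ should be framed as an induction on the lemma itself, with the tensor case reduced to the factors; and the identification $K=\aut(\mathcal{A})_e$ needs the one-line observation that both groups lie in $\aut(G)$ and act regularly on $X$, hence coincide because they have the same order $|X|$ and one contains the other.
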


\subsection{Subdirect product}

Let $K$ and $M$  be groups. Suppose that $K_0\trianglelefteq K$, $M_0\trianglelefteq M$, and $K/K_0\cong M/M_0$. Let $\pi_1:K\rightarrow K/K_0$ and $\pi_2:M\rightarrow M/M_0$ be the canonical epimorphisms and $\psi$ the isomorphism from $K/K_0$ to $M/M_0$. We can form the subdirect product $W(K,K_0,M,M_0,\psi)$ of $K$ and $M$ in the following way:
$$W(K,K_0,M,M_0,\psi)=\{(\alpha,\beta)\in K\times M| (\alpha^{\pi_1})^{\psi}=\beta^{\pi_2}\}.$$
We say that the subdirect product of two groups is \emph{nontrivial} if it does not coincide with the direct product of these groups.

\section{$S$-rings over an abelian group of order~$9p$}

Let $p\geq 5$ be a prime. The main goal of this section is to give a description and properties of $S$-rings over the groups $C_{9p}$ and $E_9\times C_p$. 
 
Let $K=\aut(C_3)$ and $M=\aut(C_p)$. It is easy to see that there exists the unique nontrivial subdirect product $W_0=W(K,K_0,M,M_0,\psi)$ of $K$ and $M$, where $K_0$ is the trivial subgroup of $K$, $M_0$ is the subgroup of $M$ of index~2, and $\psi$ is the unique isomorphism from $K/K_0$ to $M/M_0$. Put $\mathcal{A}_0=\cyc(W_0,C_{3p})$. 

\begin{lemm}\label{c3p}
Let $\mathcal{A}$ be a cyclotomic $S$-ring over $C_{3p}$, $\rk(\mathcal{A}_{C_p})=2$, and $\mathcal{A}\neq \mathcal{A}_{C_3}\otimes \mathcal{A}_{C_p}$. Then $\mathcal{A}=\mathcal{A}_0$.
\end{lemm}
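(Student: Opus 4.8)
The plan is to exploit the strong structural constraints already imposed by the hypotheses: $\mathcal{A}$ is cyclotomic, so $\mathcal{A} = \cyc(K, C_{3p})$ for some $K \leq \aut(C_{3p}) = \aut(C_3) \times \aut(C_p)$; moreover $\rk(\mathcal{A}_{C_p}) = 2$ forces the image $K^{C_p}$ of $K$ under restriction to $C_p$ to act transitively on $C_p^\#$, i.e.\ $K^{C_p} = \aut(C_p) = M$. On the other hand $\mathcal{A} \neq \mathcal{A}_{C_3} \otimes \mathcal{A}_{C_p}$ rules out $K = K^{C_3} \times K^{C_p}$ being the full direct product of its projections acting coordinatewise; by Lemma~\ref{tenspr}(2), if $K$ were this direct product then $\mathcal{A}$ would be the tensor product $\mathcal{A}_{C_3} \otimes \mathcal{A}_{C_p}$. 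Hence $K$ is a \emph{nontrivial} subdirect product of $K^{C_3} \leq \aut(C_3)$ and $K^{C_p} = M$ in the sense of the Subdirect Product subsection.

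Next I would pin down the two projections. Since $K^{C_p} = M$ and $K$ is a nontrivial subdirect product, the common quotient $M/M_0$ (with $M_0 = K \cap (1 \times M)$, viewed appropriately) must be a nontrivial quotient of $\aut(C_3)$, which has order $2$; so the common quotient is $C_2$, forcing $K^{C_3} = \aut(C_3)$ in full (if $K^{C_3}$ were trivial there would be no nontrivial amalgamation) and forcing $M_0$ to be the index-$2$ subgroup of $M$. Likewise $K_0 := K \cap (\aut(C_3) \times 1)$ must be trivial, since $\aut(C_3)/K_0$ maps onto the common quotient $C_2 = \aut(C_3)$. This is precisely the data $(K, K_0, M, M_0, \psi)$ defining $W_0$, and the gluing isomorphism $\psi: \aut(C_3)/1 \to M/M_0$ is unique since both sides have order $2$. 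Therefore $K = W_0$, and consequently $\mathcal{A} = \cyc(W_0, C_{3p}) = \mathcal{A}_0$.

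The one genuine subtlety is the passage from ``$\mathcal{A}$ is cyclotomic'' to ``$K = W_0$ is forced,'' rather than merely ``$\mathcal{A} \cong \mathcal{A}_0$'': different subgroups $K \leq \aut(C_{3p})$ can a priori yield the same $S$-ring. Here this is not an issue because $\orb(K, C_{3p})$ determines $K$ up to $2$-equivalence, and for the specific group $W_0$ — whose action on $C_{3p}$ has the nonidentity element of $\aut(C_3)$ tied to the order-$2$ coset of $M$ — one checks directly that the orbit partition it produces is the claimed one and that no smaller cyclotomic $S$-ring between $\mathcal{A}_{C_3}\otimes\mathcal{A}_{C_p}$ and $\mathcal{A}_0$ exists (the only subgroups of $\aut(C_{3p})$ strictly between $\aut(C_3)\times M_0$ type configurations and the full direct product are $W_0$ and $\aut(C_3) \times M$ itself, by the classification of subdirect products of $\aut(C_3)$ with $M$). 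So the main obstacle, such as it is, is a careful bookkeeping of which subgroups of $\aut(C_3) \times \aut(C_p)$ satisfy both ``surjects onto $\aut(C_p)$'' and ``is not a direct product,'' and verifying $W_0$ is the unique one; the $S$-ring identity then follows immediately from $\mathcal{A} = \cyc(K, C_{3p})$.
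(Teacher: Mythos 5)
Your proposal is correct and takes essentially the paper's route: write $\mathcal{A}=\cyc(K,C_{3p})$, observe that $\rk(\mathcal{A}_{C_p})=2$ forces $K^{C_p}=\aut(C_p)$, that $\mathcal{A}\neq\mathcal{A}_{C_3}\otimes\mathcal{A}_{C_p}$ forces $K$ to be a nontrivial subdirect product, and conclude $K=W_0$ by uniqueness of the nontrivial subdirect product of $\aut(C_3)$ and $\aut(C_p)$ (your Goursat bookkeeping is exactly that uniqueness statement, recorded in the paper just before the lemma, and the ``subtlety'' in your last paragraph is a non-issue, since the forcing applies to every $K$ with $\cyc(K,C_{3p})=\mathcal{A}$). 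One small correction: the implication ``$K=K^{C_3}\times K^{C_p}$ implies $\mathcal{A}=\mathcal{A}_{C_3}\otimes\mathcal{A}_{C_p}$'' is not what Lemma~\ref{tenspr}(2) says (the paper instead uses that lemma to show $\rk(\mathcal{A}_{C_3})=2$); it follows simply because the orbits of a direct product acting coordinatewise are products of orbits.
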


\begin{proof}
Let $\mathcal{A}=\cyc(W,C_{3p})$ for some $W\leq \aut(C_{3p})$. Then $C_3$ and $C_p$ are $\mathcal{A}$-subgroups. Note that $\rk(\mathcal{A}_{C_3})=2$ because otherwise $\mathcal{A}_{C_3}=\mathbb{Z}C_3$ and $\mathcal{A}=\mathcal{A}_{C_3}\otimes \mathcal{A}_{C_p}$ by Statement~2 of Lemma~\ref{tenspr}, a contradiction with the assumption of the lemma. Since $\tau(C_3)=\cyc(W^{C_3},C_{3})$ and $\tau(C_p)=\cyc(W^{C_p},C_{p})$, we obtain that $W^{C_3}=\aut(C_3)$ and $W^{C_p}=\aut(C_p)$. So $W$ is the subdirect product of $\aut(C_3)$ and $\aut(C_p)$. This subdirect product is nontrivial because $\mathcal{A}\neq \mathcal{A}_{C_3}\otimes \mathcal{A}_{C_p}$. However, $W_0$ is the unique nontrivial subdirect product of $\aut(C_3)$ and $\aut(C_p)$. We conclude that $W=W_0$ and hence $\mathcal{A}=\mathcal{A}_0$. The lemma is proved.
\end{proof}

Let $G\in \{C_{9p}, E_9\times C_p\}$. Define the $S$-rings $\mathcal{A}_i^{*}$, $i\in\{1,2,3\}$, over $G$ in the following way:
$$\mathcal{A}_1^{*}=\mathcal{A}_0\wr_{C_{3p}/C_3}(\mathbb{Z}C_3\otimes \tau(C_p)),$$ 
$$\mathcal{A}_2^{*}=\mathcal{A}_0\wr_{C_{3p}/C_3}(\tau(C_3)\otimes \tau(C_p)),$$
$$\mathcal{A}_3^{*}=\mathcal{A}_0\wr_{C_{3p}/C_3}\mathcal{A}_0.$$

Put $A=\langle a \rangle$, $B=\langle b \rangle$, $C=\langle c \rangle$, and $P=\langle z \rangle$,  where $|a|=|b|=3$, $|c|=9$, and $|z|=p$. Also put $E=\langle a \rangle \times \langle b \rangle$ and $C_0=\langle c_0 \rangle$, where $c_0=c^3$. These notations are valid until the end of the paper. From now throughout this section $H\in \{C,E\}$ and $G=H\times P$.  

\begin{lemm}\label{conj0}
Let $\mathcal{A}$ be an $S$-ring over $G$ and $X,Y\in \mathcal{S}(\mathcal{A})$. Suppose that $X_H=Y_H$. Then $X$ and $Y$ are rationally conjugate.
\end{lemm}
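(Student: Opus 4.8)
The plan is to exploit the coprimality $\gcd(|H|,|P|)=\gcd(9,p)=1$, which forces every subgroup of $G=H\times P$ to be a direct product of a subgroup of $H$ with a subgroup of $P$, and then to reduce the comparison of $X$ and $Y$ to the essentially trivial situation of $S$-rings over the prime-order group $P$. Put $T:=X_H=Y_H$. Since $\langle X\rangle$ and $\langle Y\rangle$ are $\mathcal{A}$-subgroups, they equal $\langle T\rangle\times P'$ and $\langle T\rangle\times P''$ with $P',P''\in\{\{e\},P\}$; in particular $X,Y\subseteq\langle T\rangle\times P$. Replacing $G$ by the $\mathcal{A}$-subgroup $\langle X\rangle\langle Y\rangle$ we may assume that $\langle X_H\rangle=\langle Y_H\rangle=H$ --- the alternative being that $X,Y\subseteq H$, in which case $X=X_H=Y_H=Y$ and there is nothing to prove. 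For $h\in T$ write $X_h=\{z\in P:(h,z)\in X\}$ and $Y_h=\{z\in P:(h,z)\in Y\}$, so that $X=\bigcup_{h\in T}\{h\}\times X_h$ and likewise for $Y$.

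Next I would set up the reduction provided by Schur's theorem on multipliers (Lemma~\ref{burn}). By the Chinese Remainder Theorem, for every $s$ coprime to $9$ and every $r$ coprime to $p$ there is an $m$ coprime to $9p$ such that $\sigma_m$ acts as $g\mapsto g^s$ on $H$ and as $g\mapsto g^r$ on $P$; then $X^{(m)}\in\mathcal{S}(\mathcal{A})$. Taking $s=1$ we obtain a basic set with $H$-projection $T$ and with fibres $X_h^{(r)}$, so the whole statement reduces to finding a single $r$ coprime to $p$ with $Y_h=X_h^{(r)}$ simultaneously for all $h\in T$. On the $P$-side I would use that, by Lemma~\ref{circ}, an $S$-ring over the cyclic group $P$ of prime order is either of rank $2$ or a normal cyclotomic $S$-ring, and that in both cases any two of its nontrivial basic sets are rationally conjugate; this supplies the candidate exponent $r$.

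The heart of the matter --- and the step I expect to be the main obstacle --- is the coordination of the fibres over the various $h\in T$, together with the preliminary claim that $\langle X\rangle=\langle Y\rangle$ (equivalently, that $X$ is contained in $H$ if and only if $Y$ is). For the latter I would analyse the $\mathcal{A}$-set $YX^{-1}$, namely the support of the nonnegative element $\underline{Y}\,\underline{X}^{-1}\in\mathcal{A}$: one checks that $\{e\}\times Y_P\subseteq YX^{-1}$, hence $\langle Y_P\rangle\le\langle YX^{-1}\rangle$, and feeding this, together with the constancy of the numbers $|Z\cap Hx|$ from Lemma~\ref{intersection}, back into the structure of the $\mathcal{A}$-subgroups should force $P'=P''$. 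Granting $\langle X\rangle=\langle Y\rangle=H\times P$, I would then apply Lemma~\ref{intersection} to the $\mathcal{A}$-subgroups $\langle h^{-1}h'\rangle\times P$ ($h,h'\in T$): $X$ meets each of their cosets in a constant number of points, which makes all the fibres $X_h$ pairwise rationally conjugate in $P$, and similarly for the $Y_h$. One then transports $X_{h_0}$ onto $Y_{h_0}$ by a suitable $\sigma_r$ and uses this coherence of the fibres to conclude that the same $r$ carries $X_h$ onto $Y_h$ for every $h\in T$, giving $Y=X^{(m)}$. The genuinely delicate point is this simultaneity: one must rule out a fibre pattern of $X$ that is a twist of that of $Y$ realised by no single multiplier, and it is precisely here that the primality of $p$ and the smallness of $\aut(H)$ (with $H\in\{C_9,E_9\}$) must be used.
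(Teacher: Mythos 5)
Your proposal leaves its self-declared ``heart of the matter'' unresolved, and that obstacle is in fact the whole content of the lemma. You correctly reduce, via the Chinese Remainder Theorem and Lemma~\ref{burn}, to finding one exponent $r$ coprime to $p$ with $Y_h=X_h^{(r)}$ for all $h\in T$ simultaneously, but the argument you then sketch for this simultaneity (analysing the support of $\underline{Y}\,\underline{X}^{-1}$, classifying $\mathcal{A}_P$ via Lemma~\ref{circ}, applying Lemma~\ref{intersection} to cosets of the subgroups $\langle h^{-1}h'\rangle\times P$) is never carried out; phrases such as ``should force'', ``must be used'' and ``one must rule out'' stand in for the proof. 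As written this is a plan with an acknowledged gap, not a proof.

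The gap closes with one observation you already have in hand but do not use: by Lemma~\ref{burn} the set $X^{(l)}$ is itself a basic set of $\mathcal{A}$ for every $l$ coprime to $9p$, and distinct basic sets are \emph{disjoint}; hence to prove $X^{(l)}=Y$ it suffices to exhibit a \emph{single} common element of $X^{(l)}$ and $Y$ --- no coordination of the remaining fibres is required, and the ``twist pattern'' you worry about is excluded automatically. This is exactly the paper's four-line proof: pick $hz_1\in X$ and $hz_2\in Y$ over the same $h\in X_H=Y_H$ (with $z_1,z_2\in P$), use that $P$ has prime order $p\ge 5$ to find $m$ coprime to $p$ with $z_1^m=z_2$, lift to $l\equiv 1\pmod 9$, $l\equiv m\pmod p$; then $(hz_1)^l=hz_2$ lies in $X^{(l)}\cap Y$, so $X^{(l)}=Y$. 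None of your preliminary reductions (the structure of $\langle X\rangle$ and $\langle Y\rangle$, the normalization $\langle X_H\rangle=H$, the description of $S$-rings over $C_p$) is needed, and the primality of $p$ enters only to guarantee the existence of $m$.
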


\begin{proof}
Let $h\in X_H=Y_H$, $z_1\in X_P$, and $z_2\in Y_P$. Since $P$ is of prime order~$p\geq 5$, there exists an integer $m$ coprime to~$p$ such that $z_1^m=z_2$. There exists an integer $l$ such that $l\equiv 1~\mod~9$ and $l \equiv m~\mod~p$. Then $(hz_1)^l=hz_2\in X^{(m)}\cap Y$. Lemma~\ref{burn} implies that $X^{(m)}=Y$. The lemma is proved.
\end{proof}

\begin{lemm}\label{autsection}
Let $\mathcal{A}$ be an $S$-ring over $G$ and $S=U/L$ an $\mathcal{A}$-section. Suppose that $|S|\leq 3$ or $|S|=p$ and $\rk(\mathcal{A}_S)>2$. Then $\aut(\mathcal{A}_U)^S=\aut(\mathcal{A}_{S})$.
\end{lemm}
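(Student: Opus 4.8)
The strategy is to establish the reverse inclusion $\aut(\mathcal{A}_S)\le\aut(\mathcal{A}_U)^S$, the forward inclusion being recorded in Section~2. Since $L\trianglelefteq U$, right translations of $U$ permute the $L$-cosets, so $S_{right}=(U_{right})^S\le\aut(\mathcal{A}_U)^S$. If $\mathcal{A}_S=\mathbb{Z}S$ this already gives $\aut(\mathcal{A}_U)^S=\aut(\mathcal{A}_S)$, which disposes of $|S|=1$, of $|S|=3$ with $\rk(\mathcal{A}_S)=3$, and of $|S|=p$ with $\rk(\mathcal{A}_S)=p$. So I may assume $\mathcal{A}_S\ne\mathbb{Z}S$; as $|S|$ is odd and is $\le 3$ or equal to $p$, there remain only two cases: $|S|=3$ with $\mathcal{A}_S=\tau(C_3)$, and $|S|=p$ with $2<\rk(\mathcal{A}_S)<p$. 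In both, $\aut(\mathcal{A}_S)$ is $2$-isolated by Lemma~\ref{2isol}, and $\mathcal{A}_S$ is a normal cyclotomic $S$-ring over $S$ with trivial radical — for $|S|=p$ by Lemma~\ref{circ} as in the proof of Lemma~\ref{2isol}, and for $|S|=3$ because $\tau(C_3)=\cyc(\aut(C_3),C_3)$. Writing $\mathcal{A}_S=\cyc(\langle\sigma\rangle,S)$ with $\sigma\in\aut(S)$, we get $\aut(\mathcal{A}_S)=\langle\sigma\rangle\ltimes S_{right}$; since $S_{right}\le\aut(\mathcal{A}_U)^S$ and $\sigma$ normalizes $S_{right}$, everything reduces to exhibiting one $f\in\aut(\mathcal{A}_U)$ with $f^S=\sigma$.

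I would obtain $f$ by induction on $|U|$, splitting according to the shape of $\mathcal{A}_U$ — supplied by Lemma~\ref{circ} when $U$ is cyclic, and by the description of $S$-rings over $E$ and $E\times P$ when $U\in\{E,E\times P\}$. If $\rk(\mathcal{A}_U)=2$, then $U$ has no proper nontrivial $\mathcal{A}_U$-subgroup, so $S=U$ and the claim is trivial. If $\mathcal{A}_U=\mathcal{A}_{U_1}\otimes\mathcal{A}_{U_2}$, then, because $|S|$ is prime and $\mathcal{A}_S\ne\mathbb{Z}S$, one of the factors (say $U_2$) lies in $L$, so $S$ is a section of $U_1$ and $\mathcal{A}_S$ the corresponding section $S$-ring; the inductive hypothesis gives $f_1\in\aut(\mathcal{A}_{U_1})$ inducing $\sigma$, and $f=f_1\times\id_{U_2}\in\aut(\mathcal{A}_U)$ works. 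If $\mathcal{A}_U=\cyc(W,U)$ is normal cyclotomic, then $\aut(\mathcal{A}_U)=W\ltimes U_{right}$, $L$ is $W$-invariant, and $\mathcal{A}_S=\cyc(W^S,S)$ is normal cyclotomic with trivial radical; hence the point stabilizer $\langle\sigma\rangle$ of $\aut(\mathcal{A}_S)$ equals $W^S$, and any $w\in W$ with $w^S=\sigma$ is the required $f$. Finally, if $\mathcal{A}_U$ is a proper $S'=U'/L'$-wreath product, then, using that $L$ is a maximal subgroup of $U$ (as $|S|$ is prime) and that $L'\le\rad(X)\le\langle X\rangle$ for every basic set $X$ outside $U'$, one gets $L'\le L$, so that $S$ is a section of $U/L'$ and the inductive hypothesis applied to $\mathcal{A}_{U/L'}$ produces an automorphism of $\mathcal{A}_{U/L'}$ inducing $\sigma$ on $S$.

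I expect the main obstacle to be this last case: unlike for an ordinary wreath product, the restriction homomorphism $\aut(\mathcal{A}_U)\to\aut(\mathcal{A}_{U/L'})$ for a proper $S$-wreath product need not be surjective (it is so precisely when $\aut(\mathcal{A}_{U'})^{S'}=\aut(\mathcal{A}_{S'})$, which can fail when $\mathcal{A}_{S'}$ has rank~$2$), so an automorphism of $\mathcal{A}_{U/L'}$ inducing $\sigma$ on $S$ cannot simply be pulled back to $U$. The way around this, which I would spell out along the case analysis, is to realize $\sigma$ by a rational multiplier: choose an integer $m$ coprime to $|U|$ with $m\equiv 1$ modulo the $3$-part of $|U|$ and with $\sigma_m$ inducing $\sigma$ on $S$; by Lemma~\ref{burn} the multiplier $\sigma_m$ is a Cayley isomorphism of $\mathcal{A}_U$ to itself that fixes $L$ pointwise, and one checks, within the structural analysis above, that it fixes every basic set of $\mathcal{A}_U$ — hence lies in $\aut(\mathcal{A}_U)$ — trivially inside $L$ and outside $L$ by the inductively available behaviour of $\sigma_m$ on the smaller $S$-rings making up $\mathcal{A}_U$. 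Combined with $S_{right}\le\aut(\mathcal{A}_U)^S$ and the $2$-isolation of $\aut(\mathcal{A}_S)$, this yields $\aut(\mathcal{A}_U)^S=\aut(\mathcal{A}_S)$.
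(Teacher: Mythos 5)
Your reduction to exhibiting a single $f\in\aut(\mathcal{A}_U)$ with $f^S=\sigma$ (where $\mathcal{A}_S=\cyc(\langle\sigma\rangle,S)$) is sound, but the construction of $f$ does not close. You yourself identify the obstruction in the proper $S'$-wreath case — the restriction $\aut(\mathcal{A}_U)\to\aut(\mathcal{A}_{U/L'})$ need not be surjective — and the patch you offer is the weak point. First, for $|S|=3$ it is vacuous: there $S$ is a quotient of the Sylow $3$-subgroup of $U$, so the two conditions ``$m\equiv 1$ modulo the $3$-part of $|U|$'' and ``$\sigma_m$ induces $\sigma$ on $S$'' are incompatible whenever $\sigma$ is nontrivial, which is exactly the case $\mathcal{A}_S=\tau(C_3)$ you still need to handle. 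Second, even for $|S|=p$ the decisive claim — that $\sigma_m$ \emph{fixes} every basic set of $\mathcal{A}_U$, rather than merely permuting them as Lemma~\ref{burn} guarantees — is precisely the content of the lemma in this case and is left as ``one checks''; it is not a consequence of anything established earlier, and verifying it would again require running through the structure of $\mathcal{A}_U$, i.e., the very induction you were trying to avoid. (A smaller gap of the same kind: in the tensor case, the assertion that one factor $U_i$ must lie in $L$ because $|S|$ is prime and $\mathcal{A}_S\neq\mathbb{Z}S$ is not justified; it happens to hold for the groups at hand, but only after inspecting which subgroups can be $\mathcal{A}_U$-subgroups.)

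The paper avoids all of this with one external input you did not use: every $S$-ring over $G$ is schurian (by~\cite[Theorem~1.1]{EP4} for $H=C$ and~\cite[Theorem~1.1]{PR} for $H=E$), hence so are $\mathcal{A}_U$ and $\mathcal{A}_S$. Schurity gives $\orb(\aut(\mathcal{A}_U)^S,S^2)=\{R(X):X\in\mathcal{S}(\mathcal{A}_S)\}=\orb(\aut(\mathcal{A}_S),S^2)$, so the two groups are $2$-equivalent, and Lemma~\ref{2isol} (which you do cite, but never actually exploit, since your argument would prove the containment directly) then forces equality. In other words, the missing idea is to get the hard inclusion for free from schurity plus $2$-isolation, rather than by hand-building automorphisms of $\mathcal{A}_U$; without that input, the wreath-product case of your induction remains open.
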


\begin{proof}
The $S$-ring $\mathcal{A}$ is schurian by~\cite[Theorem~1.1]{EP4} if $H=C$ and by~\cite[Theorem~1.1]{PR} if $H=E$. This implies that $\mathcal{A}_U$ and $\mathcal{A}_S$ are also schurian. So the groups $\aut(\mathcal{A}_U)^S$ and $\aut(\mathcal{A}_S)$ are 2-equivalent because $\orb(\aut(\mathcal{A}_U)^S,S^2)=\orb(\aut(\mathcal{A}_S),S^2)=\{R(X):~X\in \mathcal{S}(\mathcal{A}_S)\}$. The group $\aut(\mathcal{A}_S)$ is 2-isolated by Lemma~\ref{2isol}. Therefore $\aut(\mathcal{A}_U)^S=\aut(\mathcal{A}_S)$. The lemma is proved.
\end{proof}

If $\mathcal{A}$ is an $S$-ring over $G\cong E_9\times C_p$ then by $\rad(\mathcal{A})$ we mean the group generated by $\rad(X)$, where $X$ runs over all basic sets of $\mathcal{A}$ containing an element of order $3p$.

\begin{lemm}\label{9p}
Let $\mathcal{A}$ be an $S$-ring over $G$. Then one of the following statements holds:

$(1)$ $\rk(\mathcal{A})=2$;

$(2)$ $\mathcal{A}$ is the tensor product of two $S$-rings over proper nontrivial subgroups of $G$;

$(3)$ $\mathcal{A}$ is the nontrivial $S$-wreath product for some $\mathcal{A}$-section $S=U/L$ and $\aut(\mathcal{A}_U)^S=\aut(\mathcal{A}_{S})$;

$(4)$ $\mathcal{A}\cong \mathcal{A}_i^{*}$ for some $i\in\{1,2,3\}$; 

$(5)$ $H=C$ and $\mathcal{A}$ is a normal cyclotomic $S$-ring with $|\rad(\mathcal{A})|=1$;

$(6)$ $H=E$, $|\rad(\mathcal{A})|=1$, and $\mathcal{A}\cong_{\cay}\cyc(W,G)$, where $W=W(K,K_0,M,M_0,\psi)$ for some $K_0\trianglelefteq K\leq \aut(E)$ from Table~1, some $M_0\trianglelefteq M\leq \aut(P)$ with $M/M_0\cong K/K_0$, and some isomorphism $\psi$ from $K/K_0$ to $M/M_0$.
\end{lemm}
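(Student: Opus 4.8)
The goal is a structural dichotomy for $S$-rings $\mathcal{A}$ over $G = H \times P$ with $|G| = 9p$, $p \geq 5$. The plan is to run a case analysis driven by the $\mathcal{A}$-subgroup lattice, in particular by whether the Sylow $3$-subgroup $H$ and the Sylow $p$-subgroup $P$ are $\mathcal{A}$-subgroups and what the induced $S$-rings on the relevant sections look like. First I would record the easy reductions: since $|P| = p$ is prime, $P$ is an $\mathcal{A}$-subgroup as soon as there is any proper nontrivial $\mathcal{A}$-subgroup whose order is divisible by $p$ but is not all of $G$ (using $\langle X\rangle$, $\rad(X)$ being $\mathcal{A}$-subgroups); and similarly the $3$-part is controlled by the subgroups $A$ (or $B$, $E$, $C_0$) depending on whether $H = C$ or $H = E$. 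I would split first on $\rk(\mathcal{A})$: if $\rk(\mathcal{A}) = 2$ we are in case (1), so assume $\rk(\mathcal{A}) > 2$.

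Next I would dispose of the reducible cases. If $\mathcal{A}$ is a nontrivial tensor product of $S$-rings over proper nontrivial subgroups we are in case (2). If $\mathcal{A}$ is a proper $S$-wreath product $\mathcal{A}_U \wr_S \mathcal{A}_{G/L}$ for some $\mathcal{A}$-section $S = U/L$, I would invoke Lemma~\ref{autsection} to upgrade this to case (3): the point is that the section $S$ arising from a proper wreath decomposition over a group of order $9p$ must have $|S| \le 3$, or $|S| = p$, or $|S| = 3p$; in the first two situations Lemma~\ref{autsection} gives $\aut(\mathcal{A}_U)^S = \aut(\mathcal{A}_S)$ directly, and in the remaining situation one can shrink the section — replacing $S = U/L$ by a smaller section of order $3$ or $p$ — to land in the hypotheses of Lemma~\ref{autsection}. (Here I would use schurity of $\mathcal{A}$, which holds by~\cite[Theorem~1.1]{EP4} for $H = C$ and by~\cite[Theorem~1.1]{PR} for $H = E$, exactly as in the proof of Lemma~\ref{autsection}.) So from now on assume $\mathcal{A}$ is neither a rank-$2$ $S$-ring, nor a nontrivial tensor product, nor a proper $S$-wreath product.

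Now I would analyze the $\mathcal{A}$-subgroups under these ``primitivity'' hypotheses. The claim is that both $H$ and $P$ must be $\mathcal{A}$-subgroups: if, say, $P$ were not an $\mathcal{A}$-subgroup, then using Lemma~\ref{conj0} (basic sets with equal $H$-projection are rationally conjugate, hence share a radical) together with the $\mathcal{A}$-subgroup generated by projections, one forces either a tensor or wreath decomposition, contradicting our assumptions; a symmetric argument handles $H$. Given that $H$ and $P$ are both $\mathcal{A}$-subgroups, Lemma~\ref{tenspr} applies: $X_H, X_P \in \mathcal{S}(\mathcal{A})$ for every basic set $X$, and $\mathcal{A} \ge \mathcal{A}_H \otimes \mathcal{A}_P$ with equality (a tensor decomposition!) whenever $\mathcal{A}_H = \mathbb{Z}H$ or $\mathcal{A}_P = \mathbb{Z}P$. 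So we may assume $\mathcal{A}_H \ne \mathbb{Z}H$ and $\mathcal{A}_P \ne \mathbb{Z}P$. Since $|P| = p$ is prime, $\mathcal{A}_P \ne \mathbb{Z}P$ forces $\rk(\mathcal{A}_P) = 2$. For $\mathcal{A}_H$ I would use Lemma~\ref{circ} when $H = C$ (cyclic of order $9$): $\mathcal{A}_C$ is rank $2$, a wreath product over the section $C_0$, a tensor product (impossible, $C_9$ is a $3$-group with unique subgroup of each order), or normal cyclotomic with trivial radical; and for $H = E$ I would quote the known classification of $S$-rings over $E_9$. This already bifurcates the argument into the ``$\mathcal{A}_H$ has a wreath/proper structure'' branch and the ``$\mathcal{A}_H$ is primitive-ish'' branch.

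In the first branch — $\mathcal{A}_H$ reducible while $\mathcal{A}$ itself is not — the reducibility of $\mathcal{A}_H$ combined with Lemma~\ref{conj0} forces $L \le \rad(X)$ for basic sets outside a subgroup $U$ with $U/L$ of order $3$ and $\langle U\rangle$ divisible by $p$: this is where the $S$-rings $\mathcal{A}_i^*$, $i \in \{1,2,3\}$, come from, namely $\mathcal{A}_0 \wr_{C_{3p}/C_3}(\cdots)$ with the three choices for the top factor determined by $\rk(\mathcal{A}_{C_p})$ and $\rk(\mathcal{A}_{C_3})$; Lemma~\ref{c3p} identifies the bottom factor as $\mathcal{A}_0$. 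This gives case (4). In the remaining branch $\mathcal{A}_H$ is normal cyclotomic with trivial radical (for $H = C$) or one of the primitive $S$-rings over $E_9$ in Table~1; combining with $\rk(\mathcal{A}_P) = 2$ and the fact that $\mathcal{A}$ is not a tensor product, $\mathcal{A}$ must be cyclotomic and ``glued'' via a subdirect product $W(K, K_0, M, M_0, \psi)$ of $\aut(H)$-data and $\aut(P)$-data — this yields case (5) for $H = C$ (normal cyclotomic, trivial radical) and case (6) for $H = E$, with the $\cay$-equivalence absorbing the choice of subdirect gluing. The final check is that $\rad(\mathcal{A}) = 1$ in these cases, which follows because any nontrivial radical of a basic set generating a subgroup of order $3p$ would again produce a wreath decomposition.

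\textbf{Main obstacle.} I expect the genuinely delicate step to be the $H = E$ case in the ``primitive'' branch: classifying the cyclotomic $S$-rings over $E_9 \times C_p$ that are not tensor/wreath products requires pinning down which subgroups $K \le \aut(E_9) \cong \GL(2,3)$ can occur (the entries of Table~1), ruling out extra non-schurian possibilities, and verifying that every such $\mathcal{A}$ is $\cay$-equivalent to the claimed subdirect-product cyclotomic $S$-ring rather than merely algebraically so. The cyclic case $H = C$ is comparatively clean because $\aut(C_9)$ is cyclic and Lemma~\ref{circ} plus Lemma~\ref{2isol} do most of the work; the $E_9$ case is where the real bookkeeping lives.
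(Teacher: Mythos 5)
There is a genuine gap, and it sits at the heart of the lemma. Your reduction step assumes that every nontrivial $S$-wreath product can be upgraded to Statement~(3), ``directly'' when $|S|\le 3$ or $|S|=p$, and by shrinking the section otherwise. This is false. Lemma~\ref{autsection} covers $|S|=p$ only when $\rk(\mathcal{A}_S)>2$, and the case $|S|=p$ with $\rk(\mathcal{A}_S)=2$ cannot be shrunk away: the exceptional rings $\mathcal{A}_1^{*},\mathcal{A}_2^{*},\mathcal{A}_3^{*}$ are themselves nontrivial $S$-wreath products over the section $U/L$ with $U\cong C_{3p}$, $L\cong C_3$, $|S|=p$, and for them $\aut(\mathcal{A}_U)^S\neq\aut(\mathcal{A}_S)$ (the paper explicitly remarks that Statement~3 fails for the $\mathcal{A}_i^{*}$). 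This is exactly where most of the work in the cyclic case lies: one must pin down $\mathcal{A}_U=\mathcal{A}_0$ via Lemma~\ref{c3p}, handle the possibility that $C$ is not an $\mathcal{A}$-subgroup via \cite[Lemma~6.2]{EKP}, and then analyse $\mathcal{A}_{G/C_0}$ with Lemma~\ref{circ} and Lemma~\ref{c3p} to conclude $\mathcal{A}\cong\mathcal{A}_i^{*}$ for some $i$. In your outline, case~(4) instead surfaces in a branch where you have already assumed $\mathcal{A}$ admits no proper $S$-wreath decomposition --- but the $\mathcal{A}_i^{*}$ do admit one --- so the case analysis is internally inconsistent and the decisive subcase is left unproved. (Minor point: a proper $S$-wreath section over $G$ has $|S|\in\{1,3,p\}$; order $3p$ cannot occur since $U\neq G$ and $L\neq\{e\}$.)

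The second gap is the $H=E$ branch, which you yourself flag as ``the main obstacle'' but do not resolve. The paper does not classify $S$-rings over $E_9$ and glue; it invokes \cite[Theorem~6.1]{PR}, which says that an $S$-ring over $E_9\times C_p$ is of rank~2, a tensor product, a nontrivial $S$-wreath product with $|S|\le 3$, or cyclotomic, and then reads off Statements~2,~3,~4,~6 by inspecting the explicit list of cyclotomic $S$-rings over $E_9\times C_p$ in \cite[Section~5]{PR} --- that inspection is precisely the provenance of Table~1. Your sketch of building $\mathcal{A}$ from $\mathcal{A}_E$, $\rk(\mathcal{A}_P)=2$ and a subdirect product $W(K,K_0,M,M_0,\psi)$ contains no argument that a non-tensor, non-wreath $S$-ring over $E_9\times C_p$ must be cyclotomic, nor which $K\le\aut(E_9)\cong\GL(2,3)$ actually arise, nor that the identification can be made up to Cayley isomorphism; without citing \cite{PR} (or redoing that classification) this part of the statement is simply missing. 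A similar remark applies to the cyclic side, where schurity and the dichotomy come from \cite[Theorem~1.1]{EP4} and Lemma~\ref{circ}; your use of those is essentially the paper's, so the difference is not a new route but an incomplete one.
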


\begin{table}
{\small
\begin{tabular}{|l|l|l|l|l|l|}
  \hline
 no. & $K$ & generators of $K$ & $K_0$ & generators of $K_0$ & $|K:K_0|$ \\
  \hline
  
1. & $C_2$ & $(a,b)\rightarrow (a^2,b^2)$ & trivial & $(a,b)\rightarrow (a,b)$ & 2 \\ \hline
	
2. & $E_4$ & $(a,b)\rightarrow (a^2,b),~(a,b)\rightarrow (a,b^2)$ & $C_2$ & $(a,b)\rightarrow (a^2,b^2)$ & 2 \\ \hline	
	
3. & $C_3$ & $(a,b)\rightarrow (a,ab)$ & trivial & $(a,b)\rightarrow (a,b)$ & 3 \\ \hline	

4. & $C_6$ & $(a,b)\rightarrow (a^2,ab^2)$ & $C_2$ & $(a,b)\rightarrow (a^2,b^2)$ & 3 \\ \hline

5. & $C_6$ & $(a,b)\rightarrow (a^2,ab^2)$ & trivial & $(a,b)\rightarrow (a,b)$ & 6 \\ \hline

6. & $D_8$ & $(a,b)\rightarrow (b^2,a),~(a,b)\rightarrow (b,a)$ & $C_2\times C_2$ & $(a,b)\rightarrow (a^2,b^2),~(a,b)\rightarrow (a^2,b)$ & 2 \\ \hline

7. & $C_4$ & $(a,b)\rightarrow (b^2,a)$ & $C_2$ & $(a,b)\rightarrow (a^2,b^2)$ & 2 \\ \hline

8. & $C_4$ & $(a,b)\rightarrow (b^2,a)$ & trivial & $(a,b)\rightarrow (a,b)$ & 4 \\ \hline

9. & $C_8$ & $(a,b)\rightarrow (ab,a^2b)$ & $C_4$ & $(a,b)\rightarrow (b^2,a)$ & 2 \\ \hline

10. & $C_8$ & $(a,b)\rightarrow (ab,a^2b)$ & $C_2$ & $(a,b)\rightarrow (a^2,b^2)$ & 4 \\ \hline

11. & $C_8$ & $(a,b)\rightarrow (ab,a^2b)$ & trivial & $(a,b)\rightarrow (a,b)$ & 8 \\ \hline

\end{tabular}
}
\caption{}

\end{table}

\begin{proof}
Firstly suppose that $H=C$. Then from Lemma~\ref{circ} it follows that one of the Statements~1,2,5 of the lemma holds for $\mathcal{A}$ or $\mathcal{A}$ is the nontrivial $S$-wreath product for some $\mathcal{A}$-section $S=U/L$. Consider the latter case. Clearly, $|S|\in\{1,3,p\}$. If $|S|\leq 3$ or $|S|=p$ and $\rk(\mathcal{A}_S)>2$ then $\aut(\mathcal{A}_U)^S=\aut(\mathcal{A}_S)$ by Lemma~\ref{autsection} and Statement~3 of the lemma holds. 

Suppose that $|S|=p$ and $\rk(\mathcal{A}_S)=2$. Note that in this case
$$L=C_0~\text{and}~U=C_0\times P$$ 
because $C_0$ is the unique subgroup of $G$ of order~3. The group $U$ is cyclic and $\rk(\mathcal{A}_U)>2$. So Lemma~\ref{circ} implies that $\mathcal{A}_U=\mathcal{A}_{C_0}\otimes \mathcal{A}_P$, or $\mathcal{A}_U=\mathcal{A}_{C_0}\wr \mathcal{A}_{U/C_0}$, or  $\mathcal{A}_U$ is cyclotomic. In the first case we have
$$\aut(\mathcal{A}_U)^S=(\aut(\mathcal{A}_{C_0})\times \sym(P))^S=\sym(S)=\aut(\mathcal{A}_S)$$
and Statement~3 of the lemma holds. In the second case we conclude that $\mathcal{A}=\mathcal{A}_{C_0}\wr \mathcal{A}_{G/C_0}$ and hence Statement~3 of the lemma holds for the section $S^{'}=C_0/C_0$. Therefore we may assume that $\mathcal{A}_U$ is cyclotomic and $\mathcal{A}_U\neq\mathcal{A}_{C_0}\otimes \mathcal{A}_P$. Now Lemma~\ref{c3p} yields that 
$$\mathcal{A}_U=\mathcal{A}_0.$$

Assume that $C$ is not an $\mathcal{A}$-subgroup. Then $C_0$ is a maximal $\mathcal{A}$-subgroup inside $C$. From~\cite[Lemma~6.2]{EKP} it follows that $\mathcal{A}=\mathcal{A}_{C_0}\wr \mathcal{A}_{G/C_0}$, or $\mathcal{A}=\mathcal{A}_{U}\wr\mathcal{A}_{G/U}$, or $\mathcal{A}=\mathcal{A}_{U}\wr_{U/P}\mathcal{A}_{G/P}$. In the first, second, and third cases $\mathcal{A}$ is the nontrivial $S^{'}=U^{'}/L^{'}$-wreath product with $|S^{'}|\leq 3$ for the sections $S^{'}=C_0/C_0$, $S^{'}=U/U$, and $S^{'}=U/P$ respectively. Due to Lemma~\ref{autsection}, we obtain that $\aut(\mathcal{A}_{U^{'}})^{S^{'}}=\aut(\mathcal{A}_{S^{'}})$ and Statement~3 of the lemma holds. Therefore we may assume that $C$ is an $\mathcal{A}$-subgroup.

Let $\pi:G\rightarrow G/C_0$ be the canonical epimorphism. The group $S=P^{\pi}$ is an $\mathcal{A}_{G/C_0}$-subgroup. The group $C^{\pi}$ is also an $\mathcal{A}_{G/C_0}$-subgroup because $C$ is an $\mathcal{A}$-subgroup. So $\mathcal{A}_{G/C_0}$ is not a wreath product of two $S$-rings. Since the group $G/C_0$ is cyclic, $\mathcal{A}_{G/C_0}=\mathcal{A}_{C^{\pi}}\otimes \mathcal{A}_{P^{\pi}}$ or $\mathcal{A}_{G/C_0}$ is cyclotomic by Lemma~\ref{circ}. Note that $\mathcal{A}_{P^{\pi}}=\tau(P^{\pi})$ because $\rk(\mathcal{A}_S)=2$. Suppose that $\mathcal{A}_{G/C_0}=\mathcal{A}_{C^{\pi}}\otimes \mathcal{A}_{P^{\pi}}$. Due to~$|C^{\pi}|=3$, we conclude that 
$$\mathcal{A}_{C^{\pi}}=\mathbb{Z}C^{\pi}~\text{or}~\mathcal{A}_{C^{\pi}}=\tau(C^{\pi}).$$ 
In the former case $\mathcal{A}\cong \mathcal{A}_1^{*}$; in the latter case $\mathcal{A}\cong\mathcal{A}_2^{*}$. We obtain that Statement~4 of the lemma holds. So we may assume that $\mathcal{A}_{G/C_0}$ is cyclotomic and $\mathcal{A}_{G/C_0}\neq\mathcal{A}_{C^{\pi}}\otimes \mathcal{A}_{P^{\pi}}$. Then $\mathcal{A}_{G/C_0}\cong \mathcal{A}_0$ by Lemma~\ref{c3p} and hence $\mathcal{A}\cong \mathcal{A}_3^{*}$. Again, Statement~4 of the lemma holds.

Now let $H=E$. From~\cite[Theorem~6.1]{PR} it follows that one of the Statements~1,2 of the lemma  holds for $\mathcal{A}$, or $\mathcal{A}$ is the nontrivial $S$-wreath product for some $\mathcal{A}$-section $S=U/L$ with $|S|\leq 3$, or $\mathcal{A}$ is cyclotomic. In the second case $\aut(\mathcal{A}_U)^S=\aut(\mathcal{A}_S)$ by Lemma~\ref{autsection} and Statement~3 of the lemma holds. 

Suppose that $\mathcal{A}$ is cyclotomic. All cyclotomic $S$-rings over $E\times P$ are described in~\cite[Section~5]{PR}. It can be verified by inspecting the above $S$-rings one after the other that one of the Statements~2,4,6 of the lemma holds or $\mathcal{A}$ is the nontrivial $S$-wreath product for some $\mathcal{A}$-section $S=U/L$ with $|S|=p$ and $\rk(\mathcal{A}_S)>2$. In the latter case $\aut(\mathcal{A}_U)^S=\aut(\mathcal{A}_{S})$ by Lemma~\ref{autsection} and Statement~3 of the lemma holds. The lemma is proved.
\end{proof}

Note that Statement~3 of Lemma~\ref{9p} does not hold for $\mathcal{A}_i^{*}$, $i\in\{1,2,3\}$.

Let $H=E$. Assume that $K$ and $K_0$ are from Line~$i$ of Table~1, $i\in\{1,\ldots,11\}$. Then $K/K_0$ is cyclic. Given $M\leq \aut(P)$ with $|M|$ divisible by $|K:K_0|$ there exists the unique subgroup $M_0$ of $M$ with $M/M_0\cong K/K_0$ because $M$ is cyclic. The direct check yields that for every two isomorphisms $\psi_1$ and $\psi_2$ from $K/K_0$ to $M/M_0$ the $S$-rings $\cyc(W(K,K_0,M,M_0,\psi_1),G)$ and $\cyc(W(K,K_0,M,M_0,\psi_2),G)$ are Cayley isomorphic. Fix some isomorphism $\psi_0$ from $K/K_0$ to $M/M_0$ and put 
$$\mathcal{A}_i(M)=\cyc(W(K,K_0,M,M_0,\psi_0),G).$$

Suppose that Statement~6 of Lemma~\ref{9p} holds for an $S$-ring $\mathcal{A}$. Then the above discussion implies that $\mathcal{A}\cong_{\cay}\mathcal{A}_i(M)$ for some $i\in\{1,\ldots,11\}$ and some $M\leq \aut(P)$ with $|M|$ divisible by $|K:K_0|$, where  $K$ and $K_0$ are from Line~$i$ of Table~1. Set $|M|=k$. Clearly, $k\leq p-1$ and $\mathcal{N}(\mathcal{A}_P)=\{k\}$. The properties of $\mathcal{A}_{i}^{*}$ and $\mathcal{A}_i(M)$ presented in Table~2  follow directly from the definitions of $\mathcal{A}_{i}^{*}$ and $\mathcal{A}_i(M)$. In the first column of Table~2 we write ``$\mathcal{A}_{i}$" instead of ``$\mathcal{A}_i(M)$" for short.

\begin{table}

{\small
\begin{tabular}{|l|l|}
  \hline
  $\mathcal{A}$ & $\mathcal{N}(\mathcal{A})\setminus\mathcal{N}(\mathcal{A}_P)$       \\
  \hline
	
	$\mathcal{A}_{1}^{*}$ & $\{2,3,p-1,3(p-1)\}$  \\ \hline
	
	$\mathcal{A}_{2}^{*}$ & $\{2,6,p-1,6(p-1)\}$  \\ \hline
	
	$\mathcal{A}_{3}^{*}$ & $\{2,6,p-1, 3(p-1)\}$  \\ \hline
	
	$\mathcal{A}_{1}$ & $\{2,k\}$  \\ \hline

	$\mathcal{A}_{2}$ & $\{2,4,2k\}$ \\ \hline
	
	$\mathcal{A}_{3}$ & $\{1,3,k\}$  \\ \hline

  $\mathcal{A}_{4}$ & $\{2,6,2k\}$  \\ \hline

  $\mathcal{A}_{5}$ & $\{2,6,k\}$  \\ \hline

  $\mathcal{A}_{6}$ & $\{4,2k,4k\}$ \\ \hline

  $\mathcal{A}_{7}$ & $\{4,2k\}$  \\ \hline

  $\mathcal{A}_{8}$ & $\{4,k\}$  \\ \hline

  $\mathcal{A}_{9}$ & $\{8,4k\}$ \\ \hline

  $\mathcal{A}_{10}$ & $\{8,2k\}$  \\ \hline

  $\mathcal{A}_{11}$ & $\{8,k\}$  \\ \hline

\end{tabular}
}

\caption{}
\end{table}

If given $i\in\{1,\ldots,11\}$ and $M\leq \aut(P)$ we can form $\mathcal{A}_i(M)$, i.e. $|M|$ is divisible by $|K:K_0|$, where $K$ and $K_0$ are from Line~$i$ of Table~1, then we say that $\mathcal{A}_i(M)$ is \emph{well-defined}.

Let  $\mathcal{A}$ and $\mathcal{A}^{'}$ be $S$-rings over groups $G$ and $G^{'}$ respectively, where $G^{'}=H^{'}\times P$ and $H^{'}\in \{C,E\}$. Suppose that $P$ is an $\mathcal{A}$-,$\mathcal{A}^{'}$-subgroup and $\mathcal{A}\cong_{\alg}\mathcal{A}^{'}$. Since $P$ is the unique subgroup of order~$p$ in $G$ and in $G^{'}$, we conclude that $\mathcal{A}_P\cong_{\alg}\mathcal{A}^{'}_P$. The properties of an algebraic isomorphism imply that $\mathcal{N}(\mathcal{A})=\mathcal{N}(\mathcal{A}^{'})$, $\mathcal{N}(\mathcal{A}_P)=\mathcal{N}(\mathcal{A}^{'}_P)$, and hence 
$$\mathcal{N}(\mathcal{A})\setminus\mathcal{N}(\mathcal{A}_P)=\mathcal{N}(\mathcal{A}^{'})\setminus\mathcal{N}(\mathcal{A}^{'}_P).$$ 
So Statements~1-3 of the next lemma directly follow from the information presented in Table~2 and Statement~4 of the next lemma follows from the observation that $\mathcal{N}((\mathcal{A}_i(M))_P)=\{|M|\}$.

\begin{lemm}\label{nonisom}
The following statements hold:

$(1)$ Let $i,j\in\{1,2,3\}$. Then $\mathcal{A}^{*}_i\ncong_{\alg} \mathcal{A}^{*}_j$ whenever $i\neq j$;

$(2)$ Let $i\in\{1,2,3\}$, $j\in\{1,\ldots,11\}$, and $M\leq \aut(P)$ such that $\mathcal{A}_j(M)$ is well-defined. Then $\mathcal{A}^{*}_i\ncong_{\alg} \mathcal{A}_j(M)$;

$(3)$ Let $i,j\in\{1,\ldots,11\}$ and $M\leq \aut(P)$ such that $\mathcal{A}_i(M)$ and $\mathcal{A}_j(M)$ are well-defined. Then  $\mathcal{A}_i(M)\ncong_{\alg}\mathcal{A}_j(M)$ whenever $i\neq j$.

$(4)$ Let $i,j\in\{1,\ldots,11\}$ and  $M_1,M_2\leq \aut(P)$ such that $\mathcal{A}_i(M_1)$ and $\mathcal{A}_j(M_2)$ are well-defined. Then $\mathcal{A}_i(M_1)\ncong_{\alg}\mathcal{A}_j(M_2)$ whenever $M_1\neq M_2$;

\end{lemm}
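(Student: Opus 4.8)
The plan is to separate the $S$-rings in the list by the numerical invariant already discussed just before the statement. Recall that if $\varphi:\mathcal{A}\rightarrow\mathcal{A}^{'}$ is an algebraic isomorphism of $S$-rings over $G$ and over $G^{'}=H^{'}\times P$ with $H^{'}\in\{C,E\}$, then $\mathcal{N}(\mathcal{A})=\mathcal{N}(\mathcal{A}^{'})$, and --- since $P$ is the unique subgroup of order $p$ in both $G$ and $G^{'}$ --- whenever $P$ is simultaneously an $\mathcal{A}$- and an $\mathcal{A}^{'}$-subgroup one gets $\mathcal{A}_P\cong_{\alg}\mathcal{A}^{'}_P$, hence $\mathcal{N}(\mathcal{A}_P)=\mathcal{N}(\mathcal{A}^{'}_P)$ and $\mathcal{N}(\mathcal{A})\setminus\mathcal{N}(\mathcal{A}_P)=\mathcal{N}(\mathcal{A}^{'})\setminus\mathcal{N}(\mathcal{A}^{'}_P)$. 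So the first thing I would record is that $P$ is indeed an $\mathcal{A}$-subgroup for every $S$-ring occurring in the lemma: for $\mathcal{A}_i^{*}$ because $P\leq C_{3p}$ and $P$ is $W_0$-invariant, hence an $\mathcal{A}_0$-subgroup; and for $\mathcal{A}_i(M)=\cyc(W,G)$ because $W\leq\aut(E)\times\aut(P)$ leaves $P$ invariant. Consequently all four statements may be attacked with the pair of invariants $\mathcal{N}(\mathcal{A})\setminus\mathcal{N}(\mathcal{A}_P)$ and $\mathcal{N}(\mathcal{A}_P)$, which are precisely the data collected in Table~2.

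Statements $(1)$--$(3)$ then become finite comparisons of the second column of Table~2. For $(1)$: $\mathcal{N}(\mathcal{A}_1^{*})\setminus\mathcal{N}((\mathcal{A}_1^{*})_P)$ contains $3$ while the corresponding sets for $\mathcal{A}_2^{*}$ and $\mathcal{A}_3^{*}$ do not (for $p\geq 5$ prime, $3$ lies in neither $\{2,6,p-1,6(p-1)\}$ nor $\{2,6,p-1,3(p-1)\}$), and these two sets differ from each other because $6(p-1)$ belongs to the first but not the second. For $(2)$: each set $\mathcal{N}(\mathcal{A}_i^{*})\setminus\mathcal{N}((\mathcal{A}_i^{*})_P)$ has four distinct elements, the only exception being $i\in\{2,3\}$ with $p=7$, where it equals $\{2,6,36\}$ or $\{2,6,18\}$, whereas every set $\mathcal{N}(\mathcal{A}_j(M))\setminus\mathcal{N}((\mathcal{A}_j(M))_P)$ has at most three distinct elements; the residual three-element comparisons (in particular those with $p=7$) are dispatched directly using the divisibility restrictions on $|M|$ read off Table~1. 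For $(3)$: with $M$ held fixed, the eleven sets in the second column of Table~2 are pairwise distinct subject to the constraint that $|M|$ be divisible by the relevant index --- e.g.\ the $i=3$ set is the only one containing $1$, the sets for $i\in\{9,10,11\}$ are the only ones containing $8$, and the remaining cases follow in the same way.

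Statement $(4)$ is immediate from the observation $\mathcal{N}((\mathcal{A}_i(M))_P)=\{|M|\}$: the restriction of $\cyc(W,G)$ to $P$ is the cyclotomic $S$-ring $\cyc(M,P)$, and $M\leq\aut(C_p)$ acts freely on $P^{\#}$, so every nontrivial basic set of $(\mathcal{A}_i(M))_P$ has cardinality $|M|$. Hence $\mathcal{A}_i(M_1)\cong_{\alg}\mathcal{A}_j(M_2)$ forces $|M_1|=|M_2|$, and since $\aut(P)$ is cyclic its two subgroups of a given order coincide, so $M_1=M_2$; the contrapositive is the assertion.

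The conceptual content is thus entirely in the first paragraph; the work in the others is finite bookkeeping, and the only places needing care are the small values of $p$ and $|M|$ (for $p=7$ some of the four-element sets for $\mathcal{A}_2^{*}$ and $\mathcal{A}_3^{*}$ degenerate to three elements) together with the divisibility conditions from Table~1 that decide which $\mathcal{A}_i(M)$ are well-defined.
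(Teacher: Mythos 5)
Your strategy is exactly the paper's: its proof of this lemma consists precisely of the observation (made just before the statement) that an algebraic isomorphism must fix $P$ and hence preserves both $\mathcal{N}(\mathcal{A}_P)$ and $\mathcal{N}(\mathcal{A})\setminus\mathcal{N}(\mathcal{A}_P)$, followed by the remark that Statements~1--3 ``directly follow'' from Table~2 and Statement~4 from $\mathcal{N}((\mathcal{A}_i(M))_P)=\{|M|\}$. Your handling of (1), (2) and (4) is correct, and in fact more careful than the paper's about the degenerate parameter values (e.g.\ the collapse of the four-element sets at $p=7$).

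There is, however, one point where the claimed bookkeeping fails. In (3) you assert that, for a fixed admissible $M$, the eleven sets in the second column of Table~2 are pairwise distinct; this is false for the pair $(\mathcal{A}_1(M),\mathcal{A}_5(M))$ when $|M|=k=6$ (so $p\equiv 1 \pmod 6$): both are well-defined and the two rows give $\{2,k\}=\{2,6\}$ and $\{2,6,k\}=\{2,6\}$, so the invariant does not separate them. (A smaller slip: the rows $9,10,11$ are not the only ones containing $8$ once $k=8$, since then rows $1$ and $8$ contain it too, although those comparisons still succeed.) The coincidence at $k=6$ is a gap in the paper's own Table-2 argument as well, not something you introduced, and the conclusion of (3) remains true: it can be recovered by passing to a finer invariant of algebraic isomorphisms, namely the number of basic sets of each cardinality (equivalently, the rank). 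Indeed, $\mathcal{A}_1(M)$ has four basic sets of cardinality $2$ --- the four orbits of the inversion $(a,b)\rightarrow(a^2,b^2)$ on $E^{\#}$ --- whereas $\mathcal{A}_5(M)$ has only one, because the group $C_6$ of Line~5 of Table~1 has orbits of lengths $2$ and $6$ on $E^{\#}$; so the two $S$-rings have different ranks and cannot be algebraically isomorphic. With this one repair, your argument is complete and coincides with the paper's.
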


\begin{lemm}\label{conj}
Let $\mathcal{A}\cong_{\cay}\mathcal{A}_i(M)$ for some $i\in\{1,\ldots,11\}\setminus\{1,7,8\}$ and $M\leq\aut(P)$ such that $\mathcal{A}_i(M)$ is well-defined. Suppose that $X,Y\in\mathcal{S}(\mathcal{A})$, $\langle X \rangle =\langle Y \rangle=G$, and $|X|=|Y|$. Then $X$ and $Y$ are rationally conjugate.
\end{lemm}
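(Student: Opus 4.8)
The plan is to work with the cyclotomic model directly. A Cayley isomorphism $f$ between $S$-rings over $G$ is a group automorphism, hence satisfies $f(Z^{(m)})=f(Z)^{(m)}$ for every $Z\subseteq G$ and every $m$; consequently it carries rationally conjugate basic sets to rationally conjugate ones, and so I may assume $\mathcal{A}=\mathcal{A}_i(M)=\cyc(W,G)$ with $W=W(K,K_0,M,M_0,\psi_0)$, where $K$ and $K_0$ come from Line~$i$ of Table~1 for $i\notin\{1,7,8\}$. Then $G=E\times P$ and the basic sets of $\mathcal{A}$ are the $W$-orbits on $G$. Writing $X=(h,z)^W$ and using that $W$ projects onto $K$ and onto $M$, one gets $X_E=h^K$ (a single $K$-orbit) and $X_P=z^M$. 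From $\langle X\rangle=G$ it follows that $\langle X_E\rangle=E$ and $\langle X_P\rangle=P$; in particular $h\ne e$ and $z\ne e$, since otherwise $X$ would lie inside $E$ or inside $P$, so every element of $X$ has order $3p$. The same applies to $Y=(h',z')^W$.

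The core of the proof will be to show that $Y_E=X_E$ or $Y_E=X_E^{-1}$ — this condition is forced, since any multiplier coprime to $9p$ acts on $E\cong E_9$ as $\pm\mathrm{id}$, so that $Y_E=(X_E)^{(m)}\in\{X_E,X_E^{-1}\}$ whenever $Y=X^{(m)}$. First I would record that $\ker(W\to K)=\{\mathrm{id}\}\times M_0$, so $|W|=|K_0|\,|M|$, and that the $W$-stabiliser of $(h,z)$ equals $(K_0)_h\times\{\mathrm{id}\}$ (the condition $z\ne e$ makes the $M$-component trivial); hence $|X|=|W|/|(K_0)_h|=|M|\cdot|h^{K_0}|$, so that $|X|=|Y|$ amounts to $|h^{K_0}|=|(h')^{K_0}|$. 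Then I would go through the admissible lines of Table~1 and describe the $K$-orbits on $E^{\#}$ that generate $E$: for $i\in\{2,4,5,9,10,11\}$ there is exactly one such orbit; for $i=3$ there are exactly two and they are mutually inverse; for $i=6$ there are exactly two, but the corresponding values of $|h^{K_0}|$ are $2$ and $4$, hence distinct. In every case the hypotheses $\langle X\rangle=\langle Y\rangle=G$ and $|X|=|Y|$ force $Y_E\in\{X_E,X_E^{-1}\}$. (For $i=1$ no $K$-orbit generates $E$, and for $i\in\{7,8\}$ there are two non-inverse generating orbits with equal $|h^{K_0}|$ — which is precisely why those three lines are excluded.)

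Finally I would construct the multiplier. Pick $\varepsilon\in\{1,-1\}$ with $h^{\varepsilon}\in Y_E=(h')^K$, say $h^{\varepsilon}=(h')^{\alpha}$ with $\alpha\in K$; choose $\beta\in M$ with $(\alpha,\beta)\in W$ (possible because $W$ surjects onto $K$); since $\aut(C_p)$ acts regularly on $P^{\#}$ and $z,(z')^{\beta}\ne e$, let $\mu\in\aut(P)$ be the unique automorphism with $z^{\mu}=(z')^{\beta}$, say multiplication by $t$ coprime to $p$. As $p\ne 3$, the Chinese Remainder Theorem supplies $m$ coprime to $9p$ with $m\equiv\varepsilon\pmod 3$ and $m\equiv t\pmod p$; then $\sigma_m$ acts on $E$ as the scalar $\varepsilon$ and on $P$ as $\mu$, so $(h,z)^{m}=(h^{\varepsilon},z^{\mu})=((h')^{\alpha},(z')^{\beta})=(h',z')^{(\alpha,\beta)}\in Y$. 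Hence $X^{(m)}\cap Y\ne\emptyset$, and since both are basic sets, Lemma~\ref{burn} gives $X^{(m)}=Y$, i.e.\ $X$ and $Y$ are rationally conjugate. The main obstacle is the bookkeeping of the middle paragraph: verifying, line by line in Table~1, that the $E$-generating $K$-orbits sharing a given suborbit length $|h^{K_0}|$ form a single inverse-closed pair; the reduction to the cyclotomic model and the final assembly via Lemma~\ref{burn} are routine.
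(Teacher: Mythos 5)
Your proof is correct and follows the paper's approach: the paper likewise disposes of the lemma by inspecting the basic sets of $\mathcal{A}_i(M)$ for the admissible lines of Table~1 and then invoking Lemma~\ref{conj0}, whose proof is exactly your Chinese-Remainder construction of the multiplier. The only difference is that the paper asserts the inspection yields $X_E=Y_E$ outright, whereas your weaker conclusion $Y_E\in\{X_E,X_E^{-1}\}$ is what actually holds for line~3 of Table~1 (there the two generating $K$-orbits $bA$ and $b^{-1}A$ are mutually inverse, not equal), so the extra case you allow is genuinely needed and is correctly absorbed into the choice $m\equiv -1\pmod 3$.
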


\begin{proof}
The inspecting of basic sets of $\mathcal{A}_i(M)$ for given $M\leq \aut(P)$ and every $i\in\{1,\ldots,11\}\setminus\{1,7,8\}$ implies that $X_E=Y_E$ and we are done by Lemma~\ref{conj0}.
\end{proof}

Note that Lemma~\ref{conj} does not hold for $\mathcal{A}_i(M)$, where $i\in\{1,7,8\}$.

Let $G=E\times P$ and $\mathcal{A}=\mathcal{A}_i(M)$ for some $i\in\{1,\ldots,11\}$ and some $M\leq \aut(P)$ such that $\mathcal{A}_i(M)$ is well-defined. If $i\notin\{1,7\}$ then it can be verified by inspecting basic sets of $\mathcal{A}_i(M)$ that there exists $X\in \mathcal{S}(\mathcal{A})$ satisfying the following conditions: 

(C1) $\langle X \rangle=G$;

(C2) $X\neq G^{\#}$;

(C3) $X\neq X_U\times X_V$ for every subgroups $U$ and $V$ of $G$ with $U\times V=G$;

(C4) a radical of every subset of $X$ is trivial;

(C5) if an $\mathcal{A}$-set $Y$ such that $Y=X^f$ for some $f\in \aut(G)$ is also an $\mathcal{A}_j(M)$-set for some $j\in\{1,\ldots,11\}$ such that $\mathcal{A}_j(M)$ is well-defined then $\mathcal{A}\leq \mathcal{A}_j(M)$.

Suppose that $i\in\{1,7\}$. In this case $|M|$ is even because $|M|$ is divisible by $|K:K_0|=2$. Let $Z$ be an orbit of $M$ and $Z_1,Z_2\subseteq Z$ the orbits of the subgroup of $M$ of index~2. Put 
$$X_0=\{a,a^{-1}\},~X_1=aZ_1\cup a^{-1}Z_2,~X_2=bZ_1\cup b^{-1}Z_2$$
if $i=1$ and
$$X_0=\{a,a^{-1},b,b^{-1}\},~X_1=\{a,a^{-1}\}Z_1\cup \{b,b^{-1}\}Z_2,~X_2=\{ab,a^{-1}b^{-1}\}Z_1\cup \{a^{-1}b,ab^{-1}\}Z_2$$
if $i=7$. 
Note that $X_0,X_1,X_2\in \mathcal{S}(\mathcal{A})$. Put $X=X_0\cup X_1 \cup X_2$. It is easy to see that $X$ satisfies (C1)-(C4). The fact that $X$ satisfies (C5) can be verified by inspecting of basic sets of $\mathcal{A}_j(M)$ for every $j\in\{1,\ldots,11\}$.

\begin{lemm}\label{generate}
In the above notations, $\mathcal{A}=\langle \underline{Y} \rangle$ for every $\mathcal{A}$-set $Y$ such that $Y=X^f$ for some $f\in \aut(G)$.
\end{lemm}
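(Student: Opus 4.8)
The plan is to show that the $S$-ring $\langle\underline{Y}\rangle$ generated by a single basic set $Y=X^f$ coincides with the whole of $\mathcal{A}=\mathcal{A}_i(M)$. Since $f$ is a group automorphism of $G$, it maps $\mathcal{A}=\mathcal{A}_i(M)$ onto a Cayley-isomorphic $S$-ring $\mathcal{A}^f$ which is again of the form $\mathcal{A}_i(M)$ (the class $\mathcal{A}_i(M)$ is closed under $\aut(G)$ up to Cayley isomorphism), so it suffices to handle $Y=X$ itself and then transport via $f$; more precisely, I would prove $\langle\underline{X}\rangle=\mathcal{A}_i(M)$ and then note that applying $f$ gives $\langle\underline{Y}\rangle=\langle\underline{X}\rangle^f=\mathcal{A}_i(M)^f$, which by property (C5) and the already-known list of $S$-rings must be $\mathcal{A}_i(M)$ again (here one must be a little careful to phrase things so that (C5) applies to $Y$, as it is stated). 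Write $\mathcal{B}=\langle\underline{Y}\rangle$, the smallest $S$-ring over $G$ containing $Y$ as an $\mathcal{B}$-set; clearly $\mathcal{B}\leq\mathcal{A}$, and the goal is the reverse inclusion.

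The key steps, in order: First, use property (C1) to conclude $\langle Y\rangle=G$, so $\mathcal{B}$ is an $S$-ring over $G$ with $Y$ a basic set (or union of basic sets) generating $G$; in particular an element of order $3p$ lies in $\mathcal{B}$. Second, invoke the structural classification Lemma~\ref{9p} applied to $\mathcal{B}$: properties (C2), (C3), (C4) are designed precisely to rule out Statements~1, 2, and parts of 3--5 of that lemma — (C2) kills $\rk(\mathcal{B})=2$ via the fact that $Y\neq G^\#$, (C3) kills the tensor decomposition, and (C4) (triviality of all radicals of subsets of $Y$, hence of $\rad(\mathcal{B})$) kills the proper $S$-wreath decomposition and forces trivial radical in the cyclotomic cases. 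Third, having narrowed $\mathcal{B}$ down, conclude that $\mathcal{B}$ falls under Statement~4 or Statement~6 of Lemma~\ref{9p}, i.e. $\mathcal{B}\cong_{\cay}\mathcal{A}_j(M')$ for some $j$ and some $M'\leq\aut(P)$ (the case $H=E$; for $H=C$ the relevant statements are similarly eliminated). Fourth, match the parameters: since $Y\subseteq G$ is a $\mathcal{B}$-set and $\mathcal{B}\leq\mathcal{A}=\mathcal{A}_i(M)$, property (C5) forces $\mathcal{A}\leq\mathcal{A}_j(M')$, hence (combining with $\mathcal{B}\leq\mathcal{A}$ and $\mathcal{B}=\mathcal{A}_j(M')$ up to Cayley isomorphism) $\mathcal{B}=\mathcal{A}=\mathcal{A}_i(M)$. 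In the cases $i\in\{1,7\}$ one additionally uses the explicit description of $X=X_0\cup X_1\cup X_2$ and the products $\underline{X_1}\cdot\underline{X_1^{-1}}$, $\underline{X_0}\cdot\underline{X_1}$, etc., to recover the individual basic sets $X_0,X_1,X_2$ and the orbit $Z$ of $M$ inside $\mathcal{B}$, thereby directly exhibiting enough $\mathcal{B}$-sets to generate $\mathcal{A}_i(M)$.

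The main obstacle I expect is the bookkeeping in the last matching step: showing that the candidate $\mathcal{A}_j(M')$ produced by the classification is forced to equal $\mathcal{A}_i(M)$ rather than some larger or skew $S$-ring. This is where Lemma~\ref{nonisom} (non-isomorphism of the $\mathcal{A}_i(M)$ for distinct $i$ or distinct $M$) and property (C5) must be combined carefully — (C5) gives an inclusion $\mathcal{A}\leq\mathcal{A}_j(M')$ in one direction, and one needs a dimension/rank count or a direct comparison of basic sets (e.g.\ via the $\mathcal{N}(\cdot)$ invariant in Table~2) to close the loop. For $i\notin\{1,7\}$ this is comparatively clean because the distinguished basic set $X$ is a single basic set with strong rigidity properties; for $i\in\{1,7\}$ the extra work is verifying that the Schur-ring generated by the union $X_0\cup X_1\cup X_2$ really does split back into those three pieces under multiplication, which is a finite but slightly delicate computation with structure constants that I would organize by first computing $\rad$ and $\langle\cdot\rangle$ of the relevant products and then applying Lemma~\ref{conj0} / Lemma~\ref{conj} to identify rationally conjugate pieces.
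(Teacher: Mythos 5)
Your overall strategy coincides with the paper's: set $\mathcal{B}=\langle\underline{Y}\rangle$, note $\mathcal{B}\leq\mathcal{A}$, run the classification of Lemma~\ref{9p} on $\mathcal{B}$, use (C1)--(C4) to eliminate every alternative except the cyclotomic case $\mathcal{B}\cong_{\cay}\mathcal{A}_j(M^{'})$, and then invoke (C5) for the reverse inclusion. (A small correction along the way: (C4) also excludes Statement~4, since each $\mathcal{A}_i^{*}$ is a proper $S$-wreath product over $C_{3p}$, so a generating $\mathcal{B}$-set would contain a basic set with $C_3$ in its radical; thus one lands directly in Statement~6, not in ``4 or 6''.) The genuine gap is the identification $M^{'}=M$. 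Condition (C5) as stated only yields $\mathcal{A}\leq\mathcal{A}_j(M)$ when $Y$ is an $\mathcal{A}_j(M)$-set \emph{for the same} $M$; you apply it to conclude $\mathcal{A}\leq\mathcal{A}_j(M^{'})$ for the a priori different $M^{'}$ produced by the classification of $\mathcal{B}$, which (C5) does not license. The tools you propose for closing this loop --- the $\mathcal{N}(\cdot)$ invariant of Table~2 or a rank count --- do not apply, because $\mathcal{B}$ and $\mathcal{A}$ are related only by inclusion, not by an algebraic isomorphism, and $\mathcal{N}(\cdot)$ is an invariant of algebraic isomorphisms.

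The paper closes this gap by a short direct argument: since $E$ and $P$ are $\mathcal{B}$-subgroups, Statement~1 of Lemma~\ref{tenspr} shows that $Y_P$ is a $\mathcal{B}$-set, and the minimality of $\mathcal{B}$ forces $Y_P$ to be a basic set of $\mathcal{B}_P$, whence $|M^{'}|=|Y_P|=|M|$ and therefore $M^{'}=M$ because $\aut(P)$ is cyclic. Only after this does (C5) give $\mathcal{A}\leq\mathcal{B}$, while $\mathcal{B}\leq\mathcal{A}$ holds because $Y$ is an $\mathcal{A}$-set. Finally, the extra structure-constant computations you plan for $i\in\{1,7\}$ (recovering $X_0$, $X_1$, $X_2$ and the orbit $Z$ inside $\mathcal{B}$) are unnecessary: conditions (C1)--(C5) are formulated for the union $X=X_0\cup X_1\cup X_2$ and are inherited by every image $Y=X^f$ with $f\in\aut(G)$, so the argument above treats all cases uniformly without splitting $Y$ back into its basic pieces.
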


\begin{proof}
Put $\mathcal{B}=\langle \underline{Y} \rangle$. Let us prove that $\mathcal{B}=\mathcal{A}$. Clearly, $Y$ is a $\mathcal{B}$-set. Since $X$ satisfies (C1)-(C5), $Y$ also satisfies (C1)-(C5). So Statements~1-4 of Lemma~\ref{9p} do no hold for $\mathcal{B}$. Therefore Statement~6 of Lemma~\ref{9p} holds for $\mathcal{B}$, i.e. 
$$\mathcal{B}\cong_{\cay}\mathcal{A}_j(M^{'})$$ 
for some $j\in\{1,\ldots,11\}$ and $M^{'}\leq\aut(P)$ such that $\mathcal{A}_j(M^{'})$ is well-defined. In this case $E$ and $P$ are $\mathcal{B}$-subgroups. 

Statement~1 of Lemma~\ref{tenspr} implies that $Y_P$ is a $\mathcal{B}$-set. Since $\mathcal{B}$ is the minimal $S$-ring such that $Y$ is a $\mathcal{B}$-set, we conclude that 
$$Y_P\in \mathcal{S}(\mathcal{B}_P).$$
So $|M^{'}|=|Y_P|=|M|$. Since the group $\aut(P)$ is cyclic, $M^{'}=M$. Now from (C5) it follows that $\mathcal{B}\geq \mathcal{A}$. On the other hand, $\mathcal{B}\leq \mathcal{A}$ because $Y$ is an $\mathcal{A}$-set. Thus, $\mathcal{B}=\mathcal{A}$. The lemma is proved.
\end{proof}

\section{Proof of the Main Theorem}

Let $p$ be a prime and $G$ an abelian group of order $9p$. Let us prove that $G$ is separable. We start the proof with the following lemma which implies that every proper section of $G$ is separable.

\begin{lemm}\label{subgroup}
The groups $E_9$, $C_p$, and $C_{3p}$, where $p$ is a prime, are separable.
\end{lemm}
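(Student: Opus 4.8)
The plan is to treat the three groups in the statement separately. Since $C_p=C_{p^1}$ and $C_9=C_{3^2}$ belong to the family $C_{p^k}$, and $E_9=C_3\times C_3$ belongs to the family $C_q\times C_{q^k}$ with $q=3,\ k=1$, these groups are already known to be separable by \cite{Ry1,Ry2,Ry3}; in particular this also covers $C_{3p}$ when $p=3$. Hence the only case that genuinely requires an argument is $C_{3p}$ with $p\geq 5$. For such $p$ the cyclic group $C_{3p}$ is the unique abelian group of order $3p$, so separability with respect to $\mathcal{K}_A$ coincides with separability with respect to $\mathcal{K}_C$, and it suffices to show that for every $S$-ring $\mathcal{A}$ over $C_{3p}$ and every algebraic isomorphism $\varphi\colon\mathcal{A}\to\mathcal{A}'$, where $\mathcal{A}'$ may be taken over $C_{3p}$, the map $\varphi$ is induced by a combinatorial isomorphism. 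I would run the case analysis of Lemma~\ref{circ} applied to $\mathcal{A}$.

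In the first three cases the reduction is routine. If $\rk(\mathcal{A})=2$, then $\mathcal{A}=\tau(C_{3p})$ is separable with respect to the class of all finite groups. If $\mathcal{A}$ is a tensor product of $S$-rings over proper subgroups, then, since the only nontrivial proper subgroups of $C_{3p}$ are $C_3$ and $C_p$, we have $\mathcal{A}=\mathcal{A}_{C_3}\otimes\mathcal{A}_{C_p}$; both factors are $S$-rings over groups of prime order, i.e.\ over $C_{3^1}$ and $C_{p^1}$, hence separable, so $\mathcal{A}$ is separable by Lemma~\ref{septens}. If $\mathcal{A}$ is a nontrivial $S=U/L$-wreath product, the subgroup lattice of $C_{3p}$ forces $U=L\in\{C_3,C_p\}$, so $\mathcal{A}$ is an ordinary wreath product of two separable $S$-rings over abelian groups, hence separable by Lemma~\ref{sepwr}.

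The real work is the remaining case, in which $\mathcal{A}=\cyc(K,C_{3p})$ is normal cyclotomic with trivial radical and is of none of the three types above. Since every subgroup of $C_{3p}$ is characteristic, hence $K$-invariant, $C_3$ and $C_p$ are $\mathcal{A}$-subgroups and $\mathcal{A}\geq\mathcal{A}_{C_3}\otimes\mathcal{A}_{C_p}$ by Lemma~\ref{tenspr}; if $K$ equalled the direct product of its projections $K_1\leq\aut(C_3)$ and $K_2\leq\aut(C_p)$, then $\mathcal{A}=\cyc(K_1,C_3)\otimes\cyc(K_2,C_p)$ would be a tensor product, so in fact $K$ is a proper subdirect product of $K_1$ and $K_2$. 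As $\aut(C_3)\cong C_2$ and $\aut(C_p)$ is cyclic, Goursat's lemma then pins $K$ down completely: $K_1=\aut(C_3)$, $|K_2|$ is even, and $K$ is the unique nontrivial subdirect product of $\aut(C_3)$ and $K_2$ (essentially the construction of $\mathcal{A}_0$, with $\aut(C_p)$ replaced by $K_2$). In particular $\mathcal{A}$ is completely determined by $K_2$; moreover $\mathcal{A}_{C_p}=\cyc(K_2,C_p)$, so $|K_2|$ is the unique element of $\mathcal{N}(\mathcal{A}_{C_p})$, and, $\aut(C_p)$ being cyclic, $K_2$ is determined by $|K_2|$. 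Now I would argue that $\mathcal{A}'$ is of the same type: by Lemma~\ref{circ} it satisfies one of the four statements, and since having rank $2$, being a tensor product, and being a nontrivial wreath product are all preserved by $\varphi$ (for the latter two, by pulling the relevant decomposition back along $\varphi$ using $\langle X^{\varphi}\rangle=\langle X\rangle^{\varphi}$ and $\rad(X^{\varphi})=\rad(X)^{\varphi}$), the $S$-ring $\mathcal{A}'$ is not of the first three types, hence is normal cyclotomic with trivial radical, and the analysis above applies to it as well. Since $\varphi$ sends $C_p$ to the order-$p$ subgroup of $\mathcal{A}'$ and preserves basic-set sizes, $\mathcal{N}(\mathcal{A}'_{C_p})=\mathcal{N}(\mathcal{A}_{C_p})$, so $K_2'=K_2$ and therefore $\mathcal{A}'=\mathcal{A}$. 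Then $\varphi$ is an algebraic automorphism of $\mathcal{A}$, which by Lemma~\ref{cycsep} is induced by a Cayley isomorphism; as any Cayley isomorphism is in particular combinatorial, this settles the last case.

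The step I expect to be the main obstacle is precisely this last case, and within it the rigidity it rests on: verifying that $\mathcal{A}'$ cannot be a tensor or wreath product (which needs the transport of those decompositions along $\varphi$, together with preservation of ranks), and that a normal cyclotomic $S$-ring over $C_{3p}$ which is not a product is recovered up to algebraic isomorphism from the single number $|K_2|$. Once the identification $\mathcal{A}'=\mathcal{A}$ is in place, Lemma~\ref{cycsep} immediately yields that $\varphi$ is induced by a combinatorial isomorphism, completing the proof that $C_{3p}$, and hence all the groups in the statement, are separable.
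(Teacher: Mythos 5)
Your proof is correct and follows the paper's skeleton for most of the way: the same prior results dispose of $E_9$, $C_p$ and $C_9$, and the case analysis of Lemma~\ref{circ} together with Lemmas~\ref{septens} and~\ref{sepwr} handles rank~$2$, tensor products and wreath products exactly as the paper does. Where you genuinely diverge is the final case of a normal cyclotomic $S$-ring $\mathcal{A}=\cyc(K,C_{3p})$ with trivial radical that is not a product: the paper observes that $\mathcal{A}'$ lives over a group isomorphic to $C_{3p}$ and then invokes \cite[Theorem~1.1]{M} to conclude $\mathcal{A}'=\mathcal{A}$, whereas you classify such $S$-rings by hand via Goursat's lemma --- $K$ must be the unique nontrivial subdirect product of $\aut(C_3)$ with an even-order $K_2\leq\aut(P)$, and $K_2$ is recovered from $\mathcal{N}(\mathcal{A}_{C_p})$, which $\varphi$ preserves. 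This is in effect a strengthening of the paper's own Lemma~\ref{c3p} (which treats only $K_2=\aut(C_p)$), and it buys a self-contained identification $\mathcal{A}'=\mathcal{A}$ at the cost of having to argue that the rank-$2$, tensor and wreath types transfer along $\varphi$; your rank/radical transfer is fine, though the paper's device of noting that $\mathcal{A}'$ would otherwise already be separable (so that $\varphi^{-1}$, hence $\varphi$, is induced) is slightly cleaner. Both proofs then finish identically with Lemma~\ref{cycsep}. One small oversight: you reduce to ``$p\geq 5$'', but $C_6$ (the case $p=2$) is not covered by the families you cite at the outset; it is, however, settled by exactly your own case analysis, since for $p=2$ the group $\aut(C_2)$ is trivial, the proper-subdirect-product case is vacuous, and only the first three (already handled) cases of Lemma~\ref{circ} can occur.
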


\begin{proof}
The groups $E_9$, $C_p$, and $C_9$ are separable by~\cite[Theorem 1]{Ry1}, \cite[Theorem 1.3]{EP5}, and \cite[Lemma 5.5]{Ry1} respectively. Suppose that $p\neq 3$ and $\mathcal{A}$ is an $S$-ring over $U\cong C_{3p}$. Then Lemma~\ref{circ} implies that $\rk(\mathcal{A})=2$, or $\mathcal{A}$ is the tensor product or wreath product of two $S$-rings over groups of orders~3 and $p$, or $\mathcal{A}$ is a normal cyclotomic $S$-ring with trivial radical. In the first case, obviously, $\mathcal{A}$ is separable. In the second case $\mathcal{A}$ is separable by Lemma~\ref{septens} or Lemma~\ref{sepwr}. 

Let $\mathcal{A}$ be a normal cyclotomic $S$-ring with trivial radical and $\varphi$ an algebraic isomorphism from $\mathcal{A}$ to an $S$-ring $\mathcal{A}^{'}$ over an abelian group $U^{'}$. Note that $U^{'}\cong C_{3p}$ because $|U^{'}|=|U|=3p$ and $p\neq 3$. So we may assume that $U^{'}=U$. Then from~\cite[Theorem~1.1]{M} it follows that $\mathcal{A}^{'}=\mathcal{A}$. Lemma~\ref{cycsep} yields that $\varphi$ is induced by a Cayley isomorphism. So $\mathcal{A}$ is separable. Thus, every $S$-ring over $U$ is separable and hence $U$ is separable. The lemma is proved.
\end{proof}

Let $\mathcal{A}$ be an $S$-ring over $G$. Prove that $\mathcal{A}$ is separable. Suppose that $p=2$. Then from computer calculations made by using the package COCO2P~\cite{GAP} it follows that one of the following statements holds for $\mathcal{A}$: (1) $\rk(\mathcal{A})=2$; (2) $\mathcal{A}$ is the tensor product of two $S$-rings over proper subgroups of $G$; (3) $\mathcal{A}$ is the nontrivial $S$-wreath product for some $\mathcal{A}$-section $S=U/L$ with $|S|\leq 3$. In the first case, obviously, $\mathcal{A}$ is separable. In the second case $\mathcal{A}$ is separable by Lemma~\ref{subgroup} and Lemma~\ref{septens}. In the third case $\aut(\mathcal{A}_U)^S=\aut(\mathcal{A}_S)$ by Lemma~\ref{autsection}. So $\mathcal{A}$ is separable by Lemma~\ref{subgroup} and Lemma~\ref{sepwr}. 

Let $p=3$. Then $G\cong C_{27}$, or $G\cong C_3 \times C_9$, or $G\cong E_{27}$. In the first case $\mathcal{A}$ is separable by \cite[Lemma 5.5]{Ry1}. In the second case $\mathcal{A}$ is separable by \cite[Theorem 1]{Ry1}. In the third case $\mathcal{A}$ is separable by~\cite[Theorem~1.3]{Ry3}.

Now let $p\geq 5$. Then $G=H\times P$, where $H\in\{C,E\}$, and one of the statements of Lemma~\ref{9p} holds for $\mathcal{A}$. If Statement~1 of Lemma~\ref{9p} holds for $\mathcal{A}$ then, obviously, $\mathcal{A}$ is separable. If Statement~2 of Lemma~\ref{9p} holds for $\mathcal{A}$ then $\mathcal{A}$ is separable by Lemma~\ref{subgroup} and Lemma~\ref{septens}. If Statement~3 of Lemma~\ref{9p} holds for $\mathcal{A}$ then $\mathcal{A}$ is separable by Lemma~\ref{subgroup} and Lemma~\ref{sepwr}. Therefore we may assume that one of the Statements~4-6 of Lemma~\ref{9p} holds for $\mathcal{A}$.

Let $\varphi$ be an algebraic isomorphism from $\mathcal{A}$ to an $S$-ring $\mathcal{A}^{'}$ over an abelian group~$G^{'}$ of order~$9p$. Let us prove that $\varphi$ is induced by an isomorphism. We may assume that one of the Statements~4-6 of Lemma~\ref{9p} holds for $\mathcal{A}^{'}$ because otherwise $\mathcal{A}^{'}$ is separable by the previous paragraph and hence $\varphi^{-1}$ is induced by an isomorphism. This impies that $\varphi$ is also induced by an isomorphism and we are done.

\begin{lemm}\label{groupiso}
Under the above assumptions, if one of the Statements~5,6 of Lemma~\ref{9p} holds for $\mathcal{A}$ then $G\cong G^{'}$.
\end{lemm}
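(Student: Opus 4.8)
The plan is to show that if one of Statements 5 or 6 of Lemma~\ref{9p} holds for $\mathcal{A}$, then $\mathcal{A}$ has a basic set whose existence forces the structure of $G$, and that this structural datum is preserved by the algebraic isomorphism $\varphi$, hence also describes $G'$. First I would note that in both cases $|\rad(\mathcal{A})|=1$, so $\mathcal{A}$ has a basic set $X$ with $\langle X\rangle=G$ and (by the description of $\rad$ for $G\cong E_9\times C_p$, or by normality in the cyclic case) containing an element of order $3p$ or, in the cyclic case, of order $9p$. The key dichotomy is whether $G$ is cyclic or not: if $H=C$ then $G=C_{9p}$ contains an element of order $9$, whereas if $H=E$ then every element of $G$ has order dividing $3p$. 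So it suffices to recover from the algebraic data of $\mathcal{A}$ whether $G$ has an element of order $9$, equivalently whether the exponent of $G$ is $9p$ or $3p$.

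The exponent is an algebraic-isomorphism invariant in the following sense: $\varphi$ induces a lattice isomorphism between $\mathcal{A}$-subgroups and $\mathcal{A}'$-subgroups preserving order, so the sublattice of $\mathcal{A}$-subgroups together with their orders is transported to $\mathcal{A}'$. Thus the plan is: in case~5 ($H=C$), exhibit an $\mathcal{A}$-subgroup chain or a basic set witnessing that $G$ is cyclic of order $9p$ in a way visible to $\varphi$; in case~6 ($H=E$), do likewise for $E_9\times C_p$. Concretely, since $\mathcal{N}(\mathcal{A})=\mathcal{N}(\mathcal{A}')$ and $|G|=|G'|=9p$, and since an abelian group of order $9p$ is either $C_{9p}$ or $E_9\times C_p$, it is enough to distinguish these two by a property of $\mathcal{N}(\mathcal{A})$ or of the $\mathcal{A}$-subgroup lattice. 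For case~6 one reads off from Statement~6 and the Cayley-isomorphism normal form $\mathcal{A}\cong_{\cay}\cyc(W,G)$ that $E$ is an $\mathcal{A}$-subgroup of order~$9$ that is not cyclic (no $\mathcal{A}$-subgroup of order $9$ has an element of order $9$ — indeed $G$ itself has none), and this is transported by $\varphi$; for case~5, since $\mathcal{A}$ is normal cyclotomic over the cyclic $C_{9p}$, the basic set $X$ containing a generator has $\langle X\rangle=G$ and $\rad(X)=\{e\}$, and I would argue that $\mathcal{A}'$ cannot then be an $S$-ring over $E_9\times C_p$ by comparing the possible sizes of basic sets generating the whole group — using Lemmas~\ref{conj0}, \ref{conj}, \ref{9p} and Table~2 to see that over $E_9\times C_p$ a generating basic set of the relevant size cannot have trivial radical in the required way, while over $C_{9p}$ the normal cyclotomic structure is forced.

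I expect the main obstacle to be the case~5 direction: ruling out that a normal cyclotomic $S$-ring with trivial radical over $C_{9p}$ is algebraically isomorphic to an $S$-ring over $E_9\times C_p$. The clean way is probably to observe that in case~5, by Lemma~\ref{circ} applied inside $C_{9p}$ (e.g. to the section $C_9$), $\mathcal{A}$ has an $\mathcal{A}$-subgroup of order $9$ on which $\mathcal{A}$ restricts to a normal cyclotomic (or rank-$2$) $S$-ring over a cyclic group — something that carries an element of order $9$ — and that the corresponding $\mathcal{A}'$-subgroup of order $9$ would then have to be cyclic, contradicting $G'\cong E_9\times C_p$. Since $E_9$ has no $S$-ring that is algebraically isomorphic to any $S$-ring over $C_9$ (the lattice of subgroups differs: $C_9$ has a unique subgroup of order $3$, $E_9$ has four, and having a unique nontrivial proper $\mathcal{A}$-subgroup or not is $\varphi$-invariant), this gives the contradiction. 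The symmetric argument handles case~6. Hence $G\cong G'$ in both cases.
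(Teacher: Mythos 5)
There is a genuine gap at exactly the point you single out as the main obstacle. Your plan is to distinguish $C_{9p}$ from $E_9\times C_p$ by an invariant visible to the algebraic isomorphism $\varphi$, and the invariant you settle on is the subgroup structure of the order-$9$ part (``$C_9$ has a unique subgroup of order $3$, $E_9$ has four''). But $\varphi$ only transports the lattice of $\mathcal{A}$-subgroups, not of all subgroups, and an $S$-ring over $E_9$ need not see more than one (or even any) of the four subgroups of order~$3$. Concretely, the claim that $E_9$ carries no $S$-ring algebraically isomorphic to an $S$-ring over $C_9$ is false: the rank-$2$ $S$-rings over $C_9$ and $E_9$ are algebraically isomorphic (as the paper itself notes, between rank-$2$ $S$-rings over groups of equal order there is always an algebraic isomorphism), and $\cyc(K,C_9)$ with $|K|=3$ and $\cyc(K',E_9)$ with $K'$ as in line~3 of Table~1 both have basic sets of sizes $1,1,1,3,3$ and a unique proper nontrivial $\mathcal{A}$-subgroup, so neither the $\mathcal{A}$-subgroup lattice nor $\mathcal{N}(\cdot)$ can tell them apart. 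Likewise, assertions about element orders (``the corresponding $\mathcal{A}'$-subgroup of order $9$ would have to be cyclic'', ``$G'$ has no element of order $9$'') are not $\varphi$-invariants, since $\varphi$ acts on basic sets, not on elements. So the reduction to the order-$9$ section discards exactly the information that is needed, and the contradiction you aim for does not materialize.

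The paper instead keeps the interaction with $P$ in play: it shows $C^{\varphi}=E$ and $P^{\varphi}=P$, fixes a basic set $X$ outside $C\cup P$ with $\langle X\rangle=G$, verifies $(X_P)^{\varphi}=(X^{\varphi})_P$, and compares the coefficient of $\underline{X_P}$ in $\underline{X}^{\,3}$ on the two sides. On the cyclic side every $x\in X$ has order $9p$, so $x^3\notin P$ and the $3$-cycle action on triples $(x_1,x_2,x_3)$ with $x_1x_2x_3=z$ has no fixed points, whence the coefficient is divisible by~$3$; on the elementary abelian side the fixed triples contribute $|X^{\varphi}\cap x'E|=|K_0|$ modulo~$3$, and no $K_0$ in Table~1 has order divisible by~$3$ (this uses Lemma~\ref{intersection}). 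This mod-$3$ structure-constant computation is the essential content of the lemma and is absent from your proposal. The peripheral parts of your outline are fine: triviality of the radical does pass to $\mathcal{A}'$, so one of Statements~5,6 of Lemma~\ref{9p} holds for $\mathcal{A}'$ as well, and it does suffice to rule out $G=C_{9p}$, $G'=E_9\times C_p$.
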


\begin{proof}
The radical of $\mathcal{A}$ is trivial. So the radical of $\mathcal{A}^{'}$ is also trivial by the properties of an algebraic isomorphism. This means that one of the Statements~5,6 of Lemma~\ref{9p} holds for $\mathcal{A}^{'}$.

Assume the contrary that $G\ncong G^{'}$. Without loss of generality let $G=C\times P$ and $G^{'}=E\times P$. Then $\mathcal{A}$ is a cyclotomic normal $S$-ring with trivial radical and $\mathcal{A}^{'}\cong_{\cay}\mathcal{A}_i(M)$ for some $i\in\{1,\ldots,11\}$ and some $M\leq \aut(P)$ such that $\mathcal{A}_i(M)$ is well-defined. Note that $C$ and $P$ are the unique $\mathcal{A}$-subgroups of orders~9 and~$p$ respectively and $E$ and $P$ are the unique $\mathcal{A}^{'}$-subgroups of orders~9 and~$p$ respectively. So the properties of an algebraic isomorphism yield that 
$$C^{\varphi}=E~\text{and}~P^{\varphi}=P.~\eqno(1)$$

Let $X$ be a basic set of $\mathcal{A}$ outside $C\cup P$ with $\langle X \rangle=G$. Then $X^{'}=X^{\varphi}$ is a basic set of $\mathcal{A}^{'}$ outside $E\cup P$ with $\langle X^{'} \rangle=G^{'}$ by the properties of an algebraic isomorphism. Put $Y=X_C$, $Z=X_P$, $Y^{'}=X^{'}_E$, and $Z^{'}=X^{'}_P$. Due to Statement~1 of Lemma~\ref{tenspr}, we obtain that $Y\in\mathcal{S}(\mathcal{A}_C)$, $Z\in \mathcal{S}(\mathcal{A}_P)$, $Y^{'}\in\mathcal{S}(\mathcal{A}_E)$, and $Z^{'}\in \mathcal{S}(\mathcal{A}^{'}_P)$. 

Clearly, $c_{Y,Z}^X=1$. Since $\varphi$ is an algebraic isomorphism, we conclude that $c_{Y^{\varphi},Z^{\varphi}}^{X^{'}}=1$. From~(1) it follows that $Y^{\varphi}\subseteq E$ and $Z^{\varphi}\subseteq P$. The sets $Y^{'}$ and $Z^{'}$ are the unique basic sets of $\mathcal{A}^{'}_E$ and $\mathcal{A}^{'}_P$ respectively with $c_{Y^{'},Z^{'}}^{X^{'}}=1$. Therefore 
$$Y^{\varphi}=Y^{'}~\text{and}~Z^{\varphi}=Z^{'}.$$

Since $\langle X \rangle=G$ and $\mathcal{A}$ is cyclotomic, the set $X$ consists of elements of order~$9p$. So the element $\underline{Z}$ enters the element $\underline{X}^3$ with a coefficient $m$ divisible by~3. The number $l=|x^{'}E\cap X^{'}|$ does not depend on the choice of $x^{'}\in X^{'}$ by Lemma~\ref{intersection}. The set  $X^{'}$ consists of elements of order~$3p$ because $\langle X^{'} \rangle=G^{'}$ and $\mathcal{A}^{'}$ is cyclotomic. So the element $\underline{Z}^{'}$ enters the element $(\underline{X}^{'})^3$ with a coefficient $m^{'}$ such that $m^{'}\equiv l~\mod~3$. The properties of an algebraic isomorphism imply that $m=m^{'}$ because  $X^{\varphi}=X^{'}$ and $Z^{\varphi}=Z^{'}$. So $l$ is divisible by~3. From the definition of $\mathcal{A}_i(M)$ it follows that $l=|K_0|$, where $K_0\leq \aut(E)$ is from Line~$i$ of Table~1. However, $|K_0|$ is not divisible by~3 for every $K_0$ from Table~1, a contradiction. The lemma is proved. 
\end{proof}

\begin{lemm}\label{sringiso}
Under the above assumptions, $\mathcal{A}\cong \mathcal{A}^{'}$.
\end{lemm}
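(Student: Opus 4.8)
My plan is to split according to which of Statements~4--6 of Lemma~\ref{9p} holds for $\mathcal{A}$, to show in each case that the \emph{same} statement holds for $\mathcal{A}'$, and then to place $\mathcal{A}$ and $\mathcal{A}'$ in a common combinatorial isomorphism class. Throughout I would use that an algebraic isomorphism induces a lattice isomorphism between $\mathcal{A}$- and $\mathcal{A}'$-subgroups preserving radicals and cardinalities of basic sets; in particular it transports a nontrivial $S$-wreath decomposition of $\mathcal{A}$ to one of $\mathcal{A}'$, and (since $P$ is the unique subgroup of order~$p$ on either side) it preserves $\mathcal{N}(\mathcal{A})$ together with $\mathcal{N}(\mathcal{A}_P)$, hence the set $\mathcal{N}(\mathcal{A})\setminus\mathcal{N}(\mathcal{A}_P)$ recorded in Table~2.

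First, if Statement~4 holds, say $\mathcal{A}\cong\mathcal{A}_i^{*}$, then $\mathcal{A}'$ is also a nontrivial $S$-wreath product. But an $S$-ring satisfying Statement~5 is not a nontrivial $S$-wreath product (a normal cyclotomic $S$-ring with trivial radical over a cyclic group is not: a basic set generating the whole group is not contained in the top $U$ of the defining section $U/L$, so it has the nontrivial subgroup $L$ in its radical), and an $S$-ring satisfying Statement~6 is, via a Cayley isomorphism, algebraically isomorphic to some $\mathcal{A}_j(M)$, which is ruled out here by Lemma~\ref{nonisom}(2). Hence Statement~4 holds for $\mathcal{A}'$, say $\mathcal{A}'\cong\mathcal{A}_j^{*}$, and $\mathcal{A}_i^{*}\cong_{\alg}\mathcal{A}_j^{*}$ forces $i=j$ by Lemma~\ref{nonisom}(1). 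If $G\cong G'$ this already gives $\mathcal{A}\cong\mathcal{A}_i^{*}\cong\mathcal{A}'$, since $\mathcal{A}_i^{*}$ over $G$ and over $G'$ are Cayley isomorphic. If $G\ncong G'$, then $\{G,G'\}=\{C_{9p},\,E_9\times C_p\}$ and I would invoke the combinatorial rigidity of the generalized wreath product: $\mathcal{A}_0\wr_{C_{3p}/C_3}\mathcal{B}$ is, up to combinatorial isomorphism, independent of which abelian extension of $C_{3p}$ by $C_3$ it is built over, an explicit combinatorial isomorphism being assembled from a Cayley isomorphism of the $C_{3p}$-part, the induced map on the $C_{3p}$-quotient, and an arbitrary bijection on each $C_3$-coset lying outside $U$ (admissible since $C_3$ lies in the radical of every relevant basic set). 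Either way $\mathcal{A}\cong\mathcal{A}'$.

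Next, if Statement~5 or~6 holds for $\mathcal{A}$, then $G\cong G'$ by Lemma~\ref{groupiso}. In the Statement~5 case ($G\cong G'\cong C_{9p}$), $\mathcal{A}'$ cannot satisfy Statement~6 (which forces $G'\cong E_9\times C_p$), nor Statement~4 (since $\mathcal{A}$, being normal cyclotomic with trivial radical over a cyclic group, is not a nontrivial $S$-wreath product, and this is transported to $\mathcal{A}'$, whereas each $\mathcal{A}_j^{*}$ is such a product); so Statement~5 holds for $\mathcal{A}'$, and identifying $G'$ with $G$, \cite[Theorem~1.1]{M} gives $\mathcal{A}'=\mathcal{A}$ exactly as in the proof of Lemma~\ref{subgroup}, so $\mathcal{A}\cong\mathcal{A}'$. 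In the Statement~6 case ($G\cong G'\cong E_9\times C_p$ and $\mathcal{A}\cong_{\cay}\mathcal{A}_i(M)$), $\mathcal{A}'$ cannot satisfy Statement~5 (which forces $G'\cong C_{9p}$), nor Statement~4 (by Lemma~\ref{nonisom}(2), since $\mathcal{A}\cong_{\alg}\mathcal{A}_i(M)$ while $\mathcal{A}_j^{*}\ncong_{\alg}\mathcal{A}_i(M)$); so Statement~6 holds for $\mathcal{A}'$, say $\mathcal{A}'\cong_{\cay}\mathcal{A}_j(M')$. Then $\mathcal{A}_i(M)\cong_{\alg}\mathcal{A}_j(M')$, whence $M=M'$ by Lemma~\ref{nonisom}(4) and then $i=j$ by Lemma~\ref{nonisom}(3); therefore $\mathcal{A}\cong_{\cay}\mathcal{A}_i(M)=\mathcal{A}_j(M')\cong_{\cay}\mathcal{A}'$, and in particular $\mathcal{A}\cong\mathcal{A}'$.

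The hardest part will be forcing $\mathcal{A}'$ into the \emph{same} one of the three families as $\mathcal{A}$ rather than merely into one of them; almost all of that work is already packaged into Lemma~\ref{nonisom} (which encodes the invariant $\mathcal{N}(\mathcal{A})\setminus\mathcal{N}(\mathcal{A}_P)$), Lemma~\ref{groupiso} (the isomorphism type of the underlying group), and the remark that, over a cyclic group, triviality of the radical coincides with not being a nontrivial $S$-wreath product, which is visibly an algebraic-isomorphism invariant. The two ingredients not already present in the excerpt are the uniqueness of a normal cyclotomic $S$-ring with trivial radical over a cyclic group inside its algebraic-isomorphism class (Statement~5, handled by \cite[Theorem~1.1]{M} as for $C_{3p}$) and the combinatorial rigidity of $\mathcal{A}_0\wr_{C_{3p}/C_3}\mathcal{B}$ under a change of the underlying extension, which is needed only in the cross-type subcase of Statement~4.
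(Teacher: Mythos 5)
Your proposal is correct and follows essentially the same route as the paper: a case split on Statements~4--6 of Lemma~\ref{9p}, transporting the relevant invariant (the radical, the nontrivial $S$-wreath decomposition, the group type via Lemma~\ref{groupiso}) to pin $\mathcal{A}'$ into the same family, and then invoking Lemma~\ref{nonisom} (resp.\ \cite[Theorem~1.1]{M}) to force equality of parameters. The only substantive difference is that you explicitly justify the subcase $G\ncong G'$ in Statement~4 via the combinatorial rigidity of $\mathcal{A}_0\wr_{C_{3p}/C_3}\mathcal{B}$ under change of the underlying extension, a point the paper leaves implicit.
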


\begin{proof}
If  Statement~5 of Lemma~\ref{9p} holds for $\mathcal{A}$ then $G\cong G^{'}\cong C_{9p}$ by Lemma~\ref{groupiso}. Now the statement of the lemma follows from~\cite[Theorem~1.1]{M}.

Suppose that Statement~6 of Lemma~\ref{9p} holds for $\mathcal{A}$, i.e. $\mathcal{A}\cong_{\cay} \mathcal{A}_i(M)$ for some $i\in\{1,\ldots,11\}$ and $M\leq \aut(P)$ such that $\mathcal{A}_i(M)$ is well-defined. Lemma~\ref{groupiso} implies that $G\cong G^{'}\cong E_9\times C_p$. Since $|\rad(\mathcal{A})|=1$, we have $|\rad(\mathcal{A}^{'})|=1$. So Statement~6 of Lemma~\ref{9p} holds for $\mathcal{A}^{'}$, i.e. $\mathcal{A}^{'}\cong_{\cay} \mathcal{A}_j(M^{'})$ for some $j\in\{1,\ldots,11\}$ and $M^{'}\leq \aut(P)$ such that $\mathcal{A}_j(M^{'})$ is well-defined. Lemma~\ref{nonisom} yields that $\mathcal{A}_i(M)\cong_{\alg} \mathcal{A}_j(M^{'})$ if and only if $i=j$ and $M=M^{'}$. Therefore $\mathcal{A}\cong \mathcal{A}^{'}$.

Suppose that Statement~4 of Lemma~\ref{9p} holds for $\mathcal{A}$, i.e $\mathcal{A}\cong \mathcal{A}_i^{*}$ for some $i\in\{1,2,3\}$. Note that $|\rad(\mathcal{A}^{'})|>1$ because $|\rad(\mathcal{A})|>1$. So Statement~4 of Lemma~\ref{9p} holds for $\mathcal{A}^{'}$, i.e. $\mathcal{A}^{'}\cong \mathcal{A}_j^{*}$ for some $i\in\{1,2,3\}$. The $S$-rings $\mathcal{A}_i^{*}$ and $\mathcal{A}_j^{*}$ are algebraically isomorphic if and only if $i=j$ by Lemma~\ref{nonisom}. Thus, $\mathcal{A}\cong \mathcal{A}^{'}$. The lemma is proved.  
\end{proof}

In view of Lemma~\ref{sringiso}, we may assume that $\mathcal{A}=\mathcal{A}^{'}$. If Statement~5 of Lemma~\ref{9p} holds for $\mathcal{A}$ then $\varphi$ is induced by a Cayley isomorphism by Lemma~\ref{cycsep}. Suppose that $\mathcal{A}\cong_{\cay}\mathcal{A}_i(M)$ for some $i\in\{1,\ldots,11\}\setminus\{1,7,8\}$ and $M\leq\aut(P)$ such that $\mathcal{A}_i(M)$ is well-defined. Then there exists $X\in \mathcal{S}(\mathcal{A})$ satisfying (C1)-(C5) and $\mathcal{A}=\langle \underline{X} \rangle$ by Lemma~\ref{generate}. The properties of an algebraic isomorphism imply that $|X^{\varphi}|=|X|$ and $\langle X^{\varphi} \rangle =G$. So $X$ and $X^{\varphi}$ are rationally conjugate by Lemma~\ref{conj}. This means that $X^{\varphi}=X^{\sigma_m}$ for some $m$ coprime to~$9p$. Due to Lemma~\ref{generate}, we have that
$$\mathcal{A}=\langle \underline{X} \rangle=\langle \underline{X}^{\varphi} \rangle.$$
Now $\varphi=\varphi_{\sigma_m}$ by Lemma~\ref{uniq} and we are done.  

It remains to consider the following situations:

$(1)$ $\mathcal{A}\cong \mathcal{A}_i^{*}$ for some $i\in\{1,2,3\}$;

$(2)$ $\mathcal{A}\cong_{\cay}\mathcal{A}_i(M)$ for some $i\in\{1,7,8\}$ and $M\leq\aut(P)$ such that $\mathcal{A}_i(M)$ is well-defined. 

In the first situation without loss of generality we may assume that $G=E\times P$ and $A$ is an $\mathcal{A}$-subgroup of order~3. Denote the orbits of $M\leq \aut(P)$ with $|\aut(P):M|=2$ by $P_1$ and $P_2$.

\noindent \textbf{Case 1: $\mathcal{A}\cong\mathcal{A}_1^{*}$.} In this case the basic sets of $\mathcal{A}$ are the following:
$$X_0=\{e\},~X_1=\{a,a^{-1}\},~X_2=bA,~X_3=b^{-1}A,$$
$$Y_0=P^{\#},~Y_1=aP_1\cup a^{-1}P_2,~Y_2=aP_2\cup a^{-1}P_1,~Y_3=bAP^{\#},~Y_4=b^{-1}AP^{\#}.$$
Since $\varphi$ preserves cardinalities of basic sets and maps a given $\mathcal{A}$-subgroup to an $\mathcal{A}$-subgroup of the same order, we conclude that 
$$X_0^{\varphi}=X_0,~X_1^{\varphi}=X_1,~Y_0^{\varphi}=Y_0,$$ 
and each of the sets $\{X_2,X_3\}$, $\{Y_1,Y_2\}$, $\{Y_3,Y_4\}$ is invariant under $\varphi$. We may assume that 
$$X_2^{\varphi}=X_2~\text{and}~X_3^{\varphi}=X_3.$$ 
Indeed, otherwise replace $\varphi$ by $\varphi_1=\varphi \sigma_0$. Then $X_2^{\varphi_1}=X_2$, $X_3^{\varphi_1}=X_3$, and $\varphi$ is induced by an isomorphism if and only if $\varphi_1$ is induced by an isomorphism. 

One can see that $c_{X_2,Y_0}^{Y_3}=1$. So $c_{X_2,Y_0}^{Y_3^{\varphi}}=c_{X_2^{\varphi},Y_0^{\varphi}}^{Y_3^{\varphi}}=1$. Similarly, $c_{X_3,Y_0}^{Y_4}=1$ and hence $c_{X_3,Y_0}^{Y_4^{\varphi}}=1$. Therefore
$$Y_3^{\varphi}=Y_3~\text{and}~Y_4^{\varphi}=Y_4.$$
If $Y_1^{\varphi}=Y_1$ and $Y_2^{\varphi}=Y_2$ then $\varphi$ is trivial and it is induced by the identity isomorphism. If $Y_1^{\varphi}=Y_2$ and $Y_2^{\varphi}=Y_1$ then $\varphi$ is induced by a Cayley isomorphism $f_0\in \aut(G)$ from $\mathcal{A}$ to itself such that 
$$a^{f_0}=a^{-1},~b^{f_0}=b~\text{and}~f_0^P=\id_P.$$

\noindent\textbf{Case 2: $\mathcal{A}\cong\mathcal{A}_2^{*}$.} In this case the basic sets of $\mathcal{A}$ are the following:
$$X_0=\{e\},~X_1=\{a,a^{-1}\},~X_2=bA\cup b^{-1}A,$$
$$Y_0=P^{\#},~Y_1=aP_1\cup a^{-1}P_2,~Y_2=aP_2\cup a^{-1}P_1,~Y_3=bAP^{\#}\cup b^{-1}AP^{\#}.$$
Due to the properties of an algebraic isomorphism, we obtain that
$$X_0^{\varphi}=X_0,~X_1^{\varphi}=X_1,~X_2^{\varphi}=X_2,~Y_0^{\varphi}=Y_0,~Y_3^{\varphi}=Y_3.$$ 
Therefore $\varphi$ is trivial or $\varphi$ interchanges $Y_1$ and $Y_2$. In the first case $\varphi$ is  induced by the identity isomorphism; in the second case $\varphi$ is induced by $f_0$.

\noindent\textbf{Case 3: $\mathcal{A}\cong\mathcal{A}_3^{*}$.} In this case the basic sets of $\mathcal{A}$ are the following:
$$X_0=\{e\},~X_1=\{a,a^{-1}\},~X_2=bA\cup b^{-1}A,$$
$$Y_0=P^{\#},~Y_1=aP_1\cup a^{-1}P_2,~Y_2=aP_1\cup a^{-1}P_2,~Y_3=bAP_1\cup b^{-1}AP_2,~Y_4=bAP_2\cup b^{-1}AP_1.$$
The properties of an algebraic isomorphism imply that
$$X_0^{\varphi}=X_0,~X_1^{\varphi}=X_1,~X_2^{\varphi}=X_2,~Y_0^{\varphi}=Y_0,$$
and each of the sets  $\{Y_1,Y_2\}$, $\{Y_3,Y_4\}$ is invariant under $\varphi$. We may assume that 
$$Y_3^{\varphi}=Y_3~\text{and}~Y_4^{\varphi}=Y_4.$$ 
Indeed, otherwise replace $\varphi$ by $\varphi_1=\varphi \sigma_m$, where $m$ is an integer coprime to~$9p$ such that $Y_4^{(m)}=Y_3$ (such $m$ exists by Lemma~\ref{conj0}). Then $\varphi$ is induced by an isomorphism if and only if $\varphi_1$ is induced by an isomorphism.

Again, we have that $\varphi$ is trivial or $\varphi$ interchanges $Y_1$ and $Y_2$ and hence  $\varphi$ is  induced by the identity isomorphism or by~$f_0$.

In Cases~4-6 we assume that $\mathcal{A}=\mathcal{A}_i(M)$ for some $i\in\{1,7,8\}$ and $M\leq\aut(P)$ such that $\mathcal{A}_i(M)$ is well-defined. Since in these cases $|K:K_0|$ is even, we conclude that $|M|$ is even. Denote the unique subgroup of $M$ of index~2 by~$M_0$. Let $Z\in \mathcal{S}(\mathcal{A}_P)=\orb(M,P)$ and $Z_1,Z_2$ the orbits of $M_0$ inside $Z$.

\noindent\textbf{Case 4: $\mathcal{A}=\mathcal{A}_1(M)$.} In this case  the sets
$$X_0=\{a,a^{-1}\},~X_1=aZ_1 \cup a^{-1}Z_2,~X_2=bZ_1\cup b^{-1}Z_2$$
are basic sets of $\mathcal{A}$. Put $X=X_0\cup X_1\cup X_2$. Since $\varphi$ preserves cardinalities of basic sets, we conclude that $X_0^{'}=X_0^{\varphi}=\{h,h^{-1}\}$ for some $h\in E$. We may assume that $Z^{\varphi}=Z$. Indeed, otherwise replace $\varphi$ by $\varphi_1=\varphi \sigma_m$, where $m$ is an integer coprime to~$9p$ such that $(Z^{\varphi})^{(m)}=Z$. Then $\varphi$ is induced by an isomorphism if and only if $\varphi_1$ is induced by an isomorphism.

Every basic set of $\mathcal{A}$ outside $E\cup P$ is of the form $tS_1\cup t^{-1}S_2$, where $t\in E$ and $S_1,S_2$ are the orbits of $M_0$. Note that $c_{X_0,Z}^{X_1}=1$ and hence $c_{X_0^{'},Z}^{X_1^{'}}=1$, where $X_1^{'}=X_1^{\varphi}$. So without loss of generality we may assume that $X_1^{'}=hZ_1\cup h^{-1}Z_2$. The set $X_2^{'}=X_2^{\varphi}$ lies outside $E\cup P$. Also $(X_2^{'})_P=Z$ because $(X_2)_P=Z$ and $Z^{\varphi}=Z$. In view of $c_{X_0,Z}^{X_2}=0$, we have that $c_{X_0^{'},Z^{'}}^{X_2^{'}}=0$. Therefore $X_2^{'}=uZ_1\cup u^{-1}Z_2$ for some $u\in E\setminus \langle h \rangle$. Thus,
$$X^{'}=X^{\varphi}=X_0^{'}\cup X_1^{'}\cup X_2^{'}=\{h,h^{-1}\}\cup hZ_1\cup h^{-1}Z_2 \cup uZ_1\cup u^{-1}Z_2.$$ 

Note that $\mathcal{A}=\cyc(W,G)$, where $W=W(K,K_0,M,M_0,\psi_0)$, $K_0$ is trivial, and $K=\langle \sigma_0\rangle$. Let us define $f\in \aut(G)$ in the following way:
$$a^f=h,~b^f=u,~f^P=\id_P.$$
Since $K=\langle \sigma_0 \rangle\leq Z(\aut(G))$, we obtain that $f\in C_{\aut(G)}(K\times M)\leq C_{\aut(G)}(W)$. So $f$ is a Cayley isomorphism from $\mathcal{A}$ to itself. One can see that $X^f=X^{'}$. Lemma~\ref{generate} implies that $\mathcal{A}=\langle \underline{X} \rangle=\langle \underline{X}^{'} \rangle$. Now Lemma~\ref{uniq} yields that $\varphi=\varphi_f$.

In Cases~5-6 basic sets of $\mathcal{A}_E$ are the following:
$$\{e\},~\{a,a^{-1},b,b^{-1}\},~\{ab,a^{-1}b^{-1},ab^{-1},a^{-1}b\}.~\eqno(2)$$

\noindent\textbf{Case 5: $\mathcal{A}=\mathcal{A}_7(M)$.} In this case  the sets
$$X_0=\{a,a^{-1},b,b^{-1}\},~X_1=\{a,a^{-1}\}Z_1 \cup \{b,b^{-1}\}Z_2,~X_2=\{ab,a^{-1}b^{-1}\}Z_1\cup \{ab^{-1},a^{-1}b\}Z_2$$
are basic sets of $\mathcal{A}$. Put $X=X_0\cup X_1\cup X_2$. Note that $X_0^{'}=X_0^{\varphi}=\{h,h^{-1},u,u^{-1}\}$ for some $(h,u)\in\{(a,b),(ab,a^{-1}b)\}$ because $\varphi$ preserves cardinalities of basic sets. As in Case~4, we may assume that $Z^{\varphi}=Z$. 

Every basic set of $\mathcal{A}$ outside $E\cup P$ is of the form $\{t,t^{-1}\}S_1\cup \{r,r^{-1}\}S_2$, where $(t,r)\in \{(a,b),(ab,a^{-1}b)\}$ and $S_1,S_2$ are the orbits of $M_0$. Since $c_{X_0,Z}^{X_1}=1$, we obtain that $c_{X_0^{'},Z}^{X_1^{'}}=1$, where $X_1^{'}=X_1^{\varphi}$. So without loss of generality we may assume that $X_1^{'}=\{h,h^{-1}\}Z_1\cup \{u,u^{-1}\}Z_2$. Put $X_2^{'}=X_2^{\varphi}$. The set $X_2^{'}$ lies outside $E\cup P$. Also $(X_2^{'})_P=Z$ because $(X_2)_P=Z$ and $Z^{\varphi}=Z$. Due to $c_{X_0,Z}^{X_2}=0$, we have that $c_{X_0^{'},Z}^{X_2^{'}}=0$. This yields that $(X_2^{'})_E\neq X_0^{'}$ and hence $(X_2^{'})_E=E\setminus (X_0^{'}\cup \{e\})=\{hu,h^{-1}u^{-1},h^{-1}u,hu^{-1}\}$. Therefore 
$$X_2^{'}=\{hu,h^{-1}u^{-1}\}Z_1 \cup \{hu^{-1},h^{-1}u\}Z_2~\text{or}~X_2^{'}=\{hu^{-1},h^{-1}u\}Z_1 \cup \{hu,h^{-1}u^{-1}\}Z_2.$$ 
Thus,
$$X^{'}=X^{\varphi}=\{h,h^{-1},u,u^{-1}\}\cup \{h,h^{-1}\}Z_1\cup \{u,u^{-1}\}Z_2 \cup \{hu,h^{-1}u^{-1}\}Z_1\cup \{hu^{-1},h^{-1}u\}Z_2$$ 
or
$$X^{'}=X^{\varphi}=\{h,h^{-1},u,u^{-1}\}\cup \{h,h^{-1}\}Z_1\cup \{u,u^{-1}\}Z_2 \cup \{hu^{-1},h^{-1}u\}Z_1\cup \{hu,h^{-1}u^{-1}\}Z_2.$$ 
In the first case define $f\in\aut(G)$ as follows:
$$a^f=h,~b^f=u,~f^P=\id_P;$$
in the second case define $f\in\aut(G)$ as follows:
$$a^f=h^{-1},~b^f=u,~f^P=\id_P.$$
One can see that $X^f=X^{'}$. By the definition, $\mathcal{A}=\cyc(W,G)$ for $W=W(K,K_0,M,M_0,\psi_0)$, where $K$ is generated by $\sigma:(a,b)\rightarrow (b^{-1},a)$ and $K_0=\langle \sigma_0 \rangle$. The straightforward check with using $(h,u)\in\{(a,b),(ab,a^{-1}b)\}$ implies that $f^E\in N_{\aut(G)}(K)$ in both cases and hence $f\in N_{\aut(G)}(W)$. Therefore $f$ is a Cayley isomorphism from $\mathcal{A}$ to itself. From Lemma~\ref{generate} it follows that $\mathcal{A}=\langle \underline{X} \rangle=\langle \underline{X}^{'} \rangle$. Thus, $\varphi=\varphi_f$ by Lemma~\ref{uniq}.

\noindent\textbf{Case 6: $\mathcal{A}=\mathcal{A}_8(M)$.} In this case there exists $X\in \mathcal{S}(\mathcal{A})$ satisfying~(C1)-(C5). So $\langle \underline{X} \rangle=\mathcal{A}$ by Lemma~\ref{generate}. If $X^{\varphi}=X^{(m)}$ for some integer $m$ coprime to~$9p$ then $\langle \underline{X}^{\varphi} \rangle=\mathcal{A}$ by Lemma~\ref{generate}. Therefore $\varphi=\varphi_{\sigma_m}$ by Lemma~\ref{uniq}.

Suppose that $X^{\varphi}$ and $X$ are not rationally conjugate. Then $X_E\neq X^{\varphi}_E$ by Lemma~\ref{conj0}. Statement~1 of Lemma~\ref{tenspr} implies that $X_E, X^{\varphi}_E\in \mathcal{S}(\mathcal{A}_E)$ and $X_P, X^{\varphi}_P\in \mathcal{S}(\mathcal{A}_P)$. In view of~(2), we may assume that 
$$X_E=\{a,a^{-1},b,b^{-1}\}~\text{and}~X^{\varphi}_E=\{ab,a^{-1}b^{-1},ab^{-1},a^{-1}b\}.$$

Note that $\mathcal{A}=\cyc(W,G)$, where $W=W(K,K_0,M,M_0,\psi_0)$, $K_0$ is trivial, and $K=\langle \sigma \rangle$, where $\sigma=(a,b)\rightarrow (b^2,a)$. Define $f\in \aut(G)$ as follows:
$$a^f=ba,~b^f=ba^2,~f^P=\id_P.$$
The straightforward check yields that $\sigma f^E=f^E\sigma$ and hence $f\in C_{\aut(G)}(K\times M)\leq  C_{\aut(G)}(W)$. Therefore $f$ is a Cayley isomorphism from $\mathcal{A}$ to itself. From the definition of $f$ it follows that $X^f_E=X^{\varphi}_E$. So $X^{f\sigma_m}=X^{\varphi}$ for some integer $m$ coprime to~$9p$ by Lemma~\ref{conj0}. Due to Lemma~\ref{generate}, we have that  $\langle \underline{X}^{\varphi} \rangle=\langle \underline{X}^{f\sigma_m}\rangle=\mathcal{A}$. Lemma~\ref{uniq} implies that $\varphi=\varphi_{f\sigma_m}$. 

We checked that in all cases $\varphi$ is induced by an isomorphism and hence $\mathcal{A}$ is separable. The theorem is proved.

\end{document}